\documentclass[reqno, 12pt]{article}

\pdfoutput=1

\usepackage{enumerate}
\usepackage{latexsym}
\usepackage[centertags]{amsmath}
\usepackage{amsfonts}
\usepackage{amsthm}
\usepackage{amssymb,mathtools}
\usepackage{newlfont}
\usepackage{graphics}
\usepackage{color}
\usepackage{float}
\usepackage{diagbox}
\usepackage{tocloft}
\usepackage{titlesec}
\usepackage{booktabs,longtable,array}
\usepackage{extpfeil}
\usepackage{centernot}
\usepackage[pagebackref,colorlinks=true,linkcolor=blue,citecolor=red,urlcolor=blue]{hyperref}
\usepackage[linesnumbered,ruled,vlined]{algorithm2e}
\usepackage{url}
\usepackage[T1]{fontenc}
\usepackage{lmodern}
\usepackage{microtype}
\usepackage[nameinlink,noabbrev,capitalize]{cleveref}
\usepackage{rotating}
\usepackage{multirow}
\usepackage{extarrows}
\usepackage[sort,compress,numbers]{natbib}
\usepackage[utf8]{inputenc}
\usepackage{xcolor}
\usepackage{listings}
\usepackage{aliascnt}
\numberwithin{equation}{section}

\newtheorem{theorem}{Theorem}[section]
\newtheorem{proposition}[theorem]{Proposition}
\newtheorem{lemma}[theorem]{Lemma}
\newtheorem{corollary}[theorem]{Corollary}
\theoremstyle{definition}
\newtheorem{definition}[theorem]{Definition}

\newtheorem{conjecture}[theorem]{Conjecture}

\allowdisplaybreaks[4]

\SetKwInput{KwInput}{Input}                
\SetKwInput{KwOutput}{Output}              

\DeclareMathOperator{\CT}{CT}
\DeclareMathOperator{\Res}{Res}

\newcommand{\NN}{\mathbb Z_{\geq0}}
\newcommand{\ZZ}{\mathbb Z}
\newcommand{\RR}{\mathbb R}
\newcommand{\CC}{\mathbb C}

\newcommand{\dd}{\mathrm d}

\title{Real-Rootedness and Gamma-Positivity for a Variation of the Morris Constant Term}

\author{Feihu Liu$^{\color{blue} \dag}$ and Zihao Zhang$^{\color{blue} \S}$
\\[2mm]
{\small $^{\color{blue} \dag}$ Center for Combinatorics, LPMC,}\\[-0.8ex]
{\small Nankai University, Tianjin 300071, P.R.~China}\\
{\small $^{\color{blue} \S}$ School of Mathematics and Statistics,}\\[-0.8ex]
{\small Beijing Institute of Technology, Beijing 102400, P.R.~China}\\
{\small {\color{blue} $^\dag$} Email address: liufeihu7476@163.com}\\
{\small {\color{blue} $^\S$} Email address: zihao-zhang@foxmail.com}\\
}

\date{\today}

\begin{document}

\maketitle

\begin{abstract}
Beck and Pixton expressed the Ehrhart polynomial of the Birkhoff polytope as a weighted sum of constant terms of several multivariate rational functions. Xin and Zhang studied a class of constant terms $h_n(t)$, which can be regarded as a variation of the Morris constant term. They proved that $h_n(t)$ is a polynomial of degree $(n-1)^2$ and obtained many nice properties involving the Morris constant term identity. Let $h_n^*(y)=(1-y)^{(n-1)^2+1}\sum_{t\geq0}h_n(t)y^t$. For fixed $n\geq 3$, we obtain the following three main results: 
(i): $h_n^*(y)$ is a polynomial with positive integer coefficients.
(ii): $h_n^*(y)$ is real-rooted. In particular, all its roots are non-positive real numbers.
(iii): $h_n^*(y)$ is Gamma-positive.
Furthermore, $h_n^*(y)$ is palindromic, unimodal, and ultra log-concave.
This confirms Xin and Zhang's conjecture regarding $h_n^*(y)$. As a byproduct, we prove that every root of a Gamma polynomial associated with $h_n^*(y)$ is a negative real number.
\end{abstract}

\noindent
\begin{small}
\emph{2020 Mathematics subject classification}: Primary 05A15; Secondary 05-08, 05A20, 26C10.
\end{small}

\noindent
\begin{small}
\emph{Keywords}: Constant term; Morris constant term identity; Iterated Laurent series field; Gamma-positive; Log-concave; Real-rooted polynomial; Stable polynomial.
\end{small}

\tableofcontents

\section{Introduction}

\subsection{Background and preliminaries}

Let $n\geq3$.  The Birkhoff polytope of order $n$ is
\[
B_n=\left\{(b_{ij})\in\RR_{\geq0}^{n\times n}:
 \sum_{j=1}^n b_{ij}=1\ (1\leq i\leq n),\quad
 \sum_{i=1}^n b_{ij}=1\ (1\leq j\leq n)\right\}.
\]
Its Ehrhart counting function,
$H_n(t)=\#(tB_n\cap\ZZ^{n\times n})$ for $t\in\NN$,
counts nonnegative integral matrices with every row and column sum
equal to $t$.  Beck and Pixton~\cite{BeckPixton} express this
function in terms of constant terms.  
Inspired by results concerning the structure of the constant term~\cite{xin2022},  Xin and Zhang \cite[Section~1]{XZ} derived a particular summand for $H_n(t)$, with a sign normalization, denoted by $h_n(t)$.

To specify this summand, let
$\boldsymbol r=(r_1,\ldots,r_n)$ range over the weak compositions
of $n$, meaning the vectors of nonnegative integers with
$r_1+\cdots+r_n=n$, and set
\[
 C_{\boldsymbol r}(t)=\CT_{x_1,\ldots,x_n}
 \prod_{i=1}^n
 \frac{x_i^{(r_i-1)t}}{\prod_{j\ne i}(1-x_j/x_i)^{r_i}}.
\]
Each rational function here is expanded in the field $\mathbb Q((x_1))\cdots((x_n))$. 
The constant term operator $\CT_{x_1,\ldots,x_n} F$ denotes the constant term of Laurent series $F$ with respect to the variables $(x_1,\ldots,x_n)$.
In the notation of \cite[Section~1 and (1.2)]{XZ}, the decomposition and the chosen component are
\begin{equation}\label{eq:birkhoff-component}
\begin{split}
 H_n(t)&=\sum_{\boldsymbol r}\frac{n!}{r_1!\cdots r_n!}\,C_{\boldsymbol r}(t),\\
 h_n(t)&=(-1)^{(n-1)(n-2)/2}C_{(2,1,\ldots,1,0)}(t).
\end{split}
\end{equation}
We study the numerator associated with $h_n(t)$.  It is distinct
from the $h^*$-polynomial of $B_n$.

We use the following explicit definition from \cite[(1.2)--(1.3)]{XZ}:
\begin{equation}\label{eq:original}
h_n(t)=\CT_{x_2,\ldots,x_n}\frac{(1-x_n)\displaystyle\prod_{i=2}^{n-1}x_i^{n-1}
\prod_{i=2}^{n-1}(x_i-x_n)}{x_n^t\displaystyle\prod_{i=2}^{n}(1-x_i)^3
\prod_{2\leq i<j\leq n}(x_i-x_j)^2},
\qquad t\in\NN.
\end{equation}
Here the rational function is expanded in $\mathbb Q((x_2))\cdots((x_n))$.  The operator extracts the
coefficient of $x_2^0\cdots x_n^0$ in that field.  The variable order specifies the formal expansion; see Section~\cref{sec:ct}.

The constant term identity \eqref{eq:original} can also be regarded as a variant of the Morris constant term identity.
The following Morris constant term identity was first proved in \cite{MorrisBV}, and a simple proof was subsequently given in \cite{XinThesis}:
\begin{align}\label{Equation-Morrio-Equa}
\CT_{x_1, \ldots, x_n} M(n; k_1, k_2, k_3) = \prod_{j=0}^{n-1} \frac{\Gamma(1 + \frac{k_3}{2}) \Gamma(k_1 + k_2 - 1 + (n + j - 1)\frac{k_3}{2})}{\Gamma(1 + (j + 1)\frac{k_3}{2}) \Gamma(k_1 + j\frac{k_3}{2}) \Gamma(k_2 + j\frac{k_3}{2})},
\end{align}
where
\begin{equation*}
M(n; k_1, k_2, k_3) := \frac{1}{\prod_{i=1}^n x_i^{k_1-1} \prod_{i=1}^n (1 - x_i)^{k_2} \prod_{1 \le i < j \le n} (x_i - x_j)^{k_3}},\quad k_i \in \mathbb{N},\quad k_1 + k_2 \ge 2,
\end{equation*}
and \(M(0; k_1, k_2, k_3) = 1\).

Morris's original constant term identity, introduced in his PhD thesis \cite{MorrisThesis}: this corresponds to $k_3$ being a negative even integer. This identity is equivalent to the Selberg integral; see \cite{selberg1944}.
Zeilberger \cite{Zeilberger} was the first to consider the case \((k_1, k_2, k_3) = (1, 2, 1)\).
Equation \ref{Equation-Morrio-Equa} was subsequently studied in \cite{morales2021}.
For an excellent overview regarding the Selberg integral, we refer the reader to \cite{forrester2008}.

Based on Equation \eqref{eq:original}, we define the following series, initially in $\mathbb Q[[y]]$, 
\begin{equation}\label{eq:hstar-definition}
h_n^*(y)=(1-y)^{(n-1)^2+1}\sum_{t\geq0}h_n(t)y^t.
\end{equation}

Xin and Zhang~\cite[Conjecture~5.1]{XZ} predicted that $h_n^*(y)$ is a polynomial, and they proposed the following conjecture.

\begin{conjecture}(Xin--Zhang, \cite[Conjecture~5.1]{XZ})\label{conj:xz}
For every integer $n\geq3$, the series $h_{n}^*(y)$ is of the form
\begin{equation*}
h_{n}^*(y):=(1-y)^{(n-1)^2+1}\sum_{t\geq 0}h_n(t)y^t =y^{\left\lfloor \frac{(n-1)^2}{4}\right\rfloor}\sum_{i=0}^{\left\lfloor \frac{(n-2)^2}{2}\right\rfloor} a_{n,i} y^i,
\end{equation*}
where all $a_{n,i}$ are positive integers.  Moreover, $h_{n}^*(y)$ is real-rooted, and the reduced coefficient sequence $\left(a_{n,0},a_{n,1},\ldots, a_{n,\lfloor \frac{(n-2)^2}{2}\rfloor}\right)$
is palindromic, unimodal, and ultra-log-concave in degree $\left\lfloor \frac{(n-2)^2}{2}\right\rfloor$.
\end{conjecture}

Xin and Zhang have verified \cref{conj:xz} for $3\leq n\leq 29$. The data of $h^*(y)$ is also available at \cite{XZ}.
We prove the conjecture together with gamma-positivity.  In particular, polynomiality follows from the proof.
In what follows, we introduce the various terminology mentioned above.

For a real sequence $(a_0,\ldots,a_d)$, the following terms will
be used.  The sequence is \emph{palindromic} if $a_k=a_{d-k}$ for
every $k$.  It is \emph{unimodal} if, for some $r$,
$a_0\leq\cdots\leq a_r\geq\cdots\geq a_d$.
It is \emph{log-concave} if $a_k^2\geq a_{k-1}a_{k+1}$ for
$1\leq k<d$, and \emph{strictly log-concave} if all these
inequalities are strict.  It is \emph{ultra log-concave in degree
$d$} if
\begin{equation}\label{eq:ulc-definition}
\left(\frac{a_k}{\binom dk}\right)^2
\geq
\frac{a_{k-1}}{\binom d{k-1}}
\frac{a_{k+1}}{\binom d{k+1}}
\qquad(1\leq k<d).
\end{equation}
A nonzero polynomial in $\RR[y]$ is \emph{real-rooted} if all its
complex zeros are real, with multiplicities included.  Nonzero
constants satisfy this condition vacuously.

Xin and Zhang call \eqref{eq:ulc-definition} \emph{strong log-concavity}~\cite[Section~5]{XZ}.  We keep its degree
normalization visible.  The identities
\[
 \frac{\binom dk}{\binom d{k-1}}=\frac{d-k+1}{k},\qquad
 \frac{\binom dk}{\binom d{k+1}}=\frac{k+1}{d-k}
\]
show, after multiplication by $\binom dk^2$, that \eqref{eq:ulc-definition} is equivalent to the \emph{Newton inequalities}
\begin{equation}\label{eq:ulc-factor}
a_k^2\geq\frac{(k+1)(d-k+1)}{k(d-k)}a_{k-1}a_{k+1}.
\end{equation}

A polynomial $A(y)\in\RR[y]$ satisfying $y^{2e}A(1/y)=A(y)$ has a
\emph{gamma expansion}
\[A(y)=\sum_{j=0}^e\gamma_jy^j(1+y)^{2e-2j}.
\]
The expansion is unique.
It is \emph{gamma-positive} if $\gamma_j\geq0$ for all $j$, and
\emph{strictly gamma-positive} if $\gamma_j>0$ for all $j$.
The auxiliary polynomial $\Gamma(s)=\sum_{j=0}^e\gamma_js^j$ is
called its \emph{gamma polynomial}.

\subsection{Main results}

Throughout, for the convenience of the subsequent presentation, we set
\begin{equation}\label{eq:parameters}
\begin{gathered}
m=n-2,\qquad D=m^2,\qquad B=m(m-1),\qquad L=m(m+1),\\
N=\left\lfloor\frac{(m+1)^2}{4}\right\rfloor,
\qquad e=\left\lfloor\frac{m^2}{4}\right\rfloor.
\end{gathered}
\end{equation}
Their relations are
\begin{equation}\label{eq:parameter-relations}
L=2N+2e,\qquad
2e=\left\lfloor\frac{m^2}{2}\right\rfloor,\qquad
N-m=\left\lfloor\frac{(m-1)^2}{4}\right\rfloor.
\end{equation}
Indeed, if $m=2r$, then $(N,e)=(r(r+1),r^2)$, while if
$m=2r+1$, then $(N,e)=((r+1)^2,r(r+1))$.  Substituting these
values proves all three identities.  We shall write
$$A_m(y)=y^{-N}h_{m+2}^*(y),$$ initially as an element of
$\mathbb Q((y))$; the main theorem shows that it is a polynomial.

The main result of this paper is the following theorem, thereby proving \cref{conj:xz} of Xin and Zhang.

\begin{theorem}\label{thm:resolution}
Let $m\geq1$ and use the parameters in \eqref{eq:parameters}.
The series $h_{m+2}^*(y)$ in \eqref{eq:hstar-definition} has the
following properties.
\begin{enumerate}
\item[(i)] There are positive integers $\gamma_{m,0},\ldots,\gamma_{m,e}$ such that
\begin{equation}\label{eq:gamma-main}
h_{m+2}^*(y)=y^N\sum_{j=0}^e
\gamma_{m,j}y^j(1+y)^{2e-2j}.
\end{equation}
They are given by the finite formula \eqref{eq:gamma-flow-formula}.  Consequently,
\begin{equation}\label{eq:reduced-definition}
h_{m+2}^*(y)=y^NA_m(y),\qquad
A_m(y)=\sum_{k=0}^{2e}a_{m,k}y^k\in\ZZ[y],
\end{equation}
where every $a_{m,k}$ is positive.  In particular the order at zero of $h_{m+2}^*(y)$ is exactly $N$, and its degree is $N+2e$.

\item[(ii)] Every zero of $A_m(y)$ is a negative real number. Thus $h_{m+2}^*(y)$ has a zero of multiplicity $N$ at zero and all
its other zeros are negative.

\item[(iii)] For a fix $m$, the reduced coefficients $a_{m,k}$ are palindromic and, when $e\geq1$, increase strictly to their middle coefficient:
\[a_{m,k}=a_{m,2e-k}\quad(0\leq k\leq2e),\qquad 0<a_{m,0}<a_{m,1}<\cdots<a_{m,e}.\]
They satisfy the precise Newton inequalities
\begin{equation}\label{eq:main-newton}
a_{m,k}^2\geq \frac{(k+1)(2e-k+1)}{k(2e-k)}a_{m,k-1}a_{m,k+1} \qquad(1\leq k<2e).
\end{equation}
\end{enumerate}
For $m=1$, the assertions read $h_3^*(y)=y$ and $A_1(y)=1$.
\end{theorem}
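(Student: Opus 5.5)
The plan is to first obtain an explicit closed form for $h_n(t)$ by iterated constant-term extraction, then turn it into an explicit gamma expansion, and finally read off (i)--(iii) from the structure of that expansion.

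\emph{Step 1: evaluate $h_n(t)$.} In \eqref{eq:original} the only $t$-dependence is the factor $x_n^{-t}$. So I would first take the constant term in $x_2,\ldots,x_{n-1}$, working in $\mathbb Q((x_2))\cdots((x_n))$. The $x_n$-dependence of the remaining $(n-2)$-variable kernel enters through $(x_i-x_n)^{-1}$ and $(1-x_n)^{-2}$. For fixed $x_n$, this kernel is a Morris-type kernel $M(m;k_1,k_2,2)$ with a shifted point. I expect the result to be a rational function of $x_n$ of the form $P_m(x_n)/(1-x_n)^{D+2m+1}$, where the numerator is expressible through a sum over a shifted Morris identity \eqref{Equation-Morrio-Equa}. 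Then $h_n(t)=[x_n^t]$ of it. This exhibits $\sum_t h_n(t)y^t$ as a rational function with denominator $(1-y)^{(n-1)^2+1}$, which gives polynomiality of $h_n^*$ directly. Concretely I would expand $\prod_i(x_i-x_n)^{-1}$ in powers of $x_n/x_i$, so that each coefficient of $x_n^k$ becomes a Morris constant term with $k_1$ shifted. By Aomoto-type refinements of \eqref{Equation-Morrio-Equa}, that coefficient is a product of Gamma ratios.

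\emph{Step 2: gamma expansion.} Given the explicit numerator, I would rewrite $h^*$ as $y^N$ times a sum of terms $y^j(1+y)^{2e-2j}$. The paper announces a finite ``flow'' formula \eqref{eq:gamma-flow-formula}, which suggests a combinatorial (lattice path or flow) interpretation making each $\gamma_{m,j}$ a sum of positive products. I would aim to verify palindromicity $y^{2e}A_m(1/y)=A_m(y)$ via the reciprocity $h_n(-t-c)=\pm h_n(t-c')$, which follows from the symmetry $x\mapsto 1/x$ in the constant term. With that in hand, I would match the coefficients to obtain positive $\gamma_{m,j}$.

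\emph{Step 3: real-rootedness.} This is the main obstacle. My plan is to show that the gamma polynomial $\Gamma_m(s)=\sum_j\gamma_{m,j}s^j$ has only negative real zeros. Then $A_m(y)=(1+y)^{2e}\Gamma_m\bigl(y/(1+y)^2\bigr)$, and each negative root $s_0$ of $\Gamma_m$ gives the quadratic $y^2+(2-1/s_0)y+1$. This quadratic has discriminant positive (since $1/s_0<0$) and negative roots, so all zeros of $A_m$ are negative. For $\Gamma_m$ itself I would try to recognize it as a hypergeometric or Jacobi-type orthogonal polynomial coming from the Selberg structure, or to produce it by a multiplier sequence or stable-polynomial (Pólya--Schur, Borcea--Brändén) operator applied to a real-rooted product. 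Failing that, I would look for a three-term recurrence in $m$ giving interlacing.

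\emph{Step 4: consequences.} Once (ii) holds and the coefficients are positive, Newton's inequalities give \eqref{eq:main-newton} immediately. Palindromicity comes from the gamma form. Strict increase to the middle follows because each $y^j(1+y)^{2e-2j}$ is palindromic and unimodal centered at $e$, with strict increase, and the $j=0$ term carries $\gamma_{m,0}>0$. The case $m=1$ ($e=0$) is checked directly.
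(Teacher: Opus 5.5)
There is a genuine gap. Steps 2 and 3 announce the two substantive conclusions of the theorem, positivity of the $\gamma_{m,j}$ and real-rootedness, but give no mechanism for either. Only the deductions in Step 4 (Newton's inequalities and strict unimodality from the gamma form) are actually carried out, and those are fine.

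\textbf{Part (i).} In Step 2, nothing explains why coefficients obtained by ``matching'' should be positive. Still less does it show that all of $\gamma_{m,0},\dots,\gamma_{m,e}$ are nonzero, or that the order at $0$ is exactly $N$. Step 1 does not supply this input either. For $m\geq2$, the coefficient of $x_n^k$ in $\prod_{i=2}^{n-1}(x_i-x_n)^{-1}$ is $\prod_i x_i^{-1}$ times the complete homogeneous symmetric polynomial of degree $k$ in $x_2^{-1},\dots,x_{n-1}^{-1}$. That is not a shift of $k_1$, so the claimed product of Gamma ratios is unsupported. Your exponent is also off: $D+2m+1=(n-1)^2$ is one short of $(n-1)^2+1$.

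The missing idea is the M\"obius substitution $x_i=(z_i+y)/(1+z_i)$, which is legitimate by Xin's residue theorem in $\mathbb Q((z_1))\cdots((z_m))((y))$.
\begin{itemize}
\item All factors of $1-y$ collect to exactly $(1-y)^{-((m+1)^2+1)}$, giving $h_{m+2}^*(y)=\CT_z\prod_i(z_i^2+(1+y)z_i+y)^m/\Delta_m(z)^2$.
\item Expand the trinomials and $\Delta_m^{-2}$, whose coefficients $\prod_{i<j}(f_{ij}+1)$ are positive. This produces only terms $y^{\mathsf C}(1+y)^{L-2\mathsf C}$ with positive weights.
\item Prefix-cut inequalities together with $\mathsf B\geq0$ force $N\leq\mathsf C\leq N+e$.
\item Zero-flow arrays realize every value in that range.
\end{itemize}
This gives (i), palindromicity and strict unimodality at once. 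No reciprocity argument is needed; yours would anyway be delicate, since $x\mapsto1/x$ changes the expansion field.

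\textbf{Part (ii).} Step 3 lists candidate tools rather than giving an argument. Your reduction from $\Gamma_m$ to $A_m$ is correct, but it is an equivalence. The paper proves both directions in Section~5 and derives the $\Gamma_m$ statement from the $A_m$ one, so passing to $\Gamma_m$ does not make the problem easier.

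What is needed is multivariate stability, which the paper obtains as follows.
\begin{itemize}
\item Homogenize to $\mathcal H_m(y,w)$ and show $\mathcal H_m(y,w)=(yw)^mF_{m,2m}(y,\dots,y,w,\dots,w)$, where $F_{m,q}(b)=\CT_z\prod_{r,i}(1+b_rz_i)/\Delta_m(z)^2$.
\item Show $F_{m,q}=m_\lambda+\sum_{\nu\prec\lambda}c_\nu m_\nu$ with $\lambda=(m-1,m-1,\dots,1,1)$.
\item Using the kernel identity $\mathcal D_q^{(b)}K-\mathcal D_m^{(z)}K=2(q-m)E_bK$ and the integration-by-parts identity $T_m(\mathcal D_mg)=m(m-1)T_m(g)$, show that $F_{m,q}$ is an eigenfunction of the parameter $-1$ Laplace--Beltrami operator $\mathcal D_q=\mathcal B_q-\mathcal A_q$.
\item A spectral gap, $\varepsilon_\lambda>\varepsilon_\mu$ for $\mu\prec\lambda$ (valid because $\lambda$ drops by at most one), makes this eigenfunction unique. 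It is then the limit of $e^{-t\varepsilon_\lambda}e^{t\mathcal D_q}E_\lambda$.
\item $e^{t\mathcal D_q}$ preserves real stability by a Trotter splitting into $e^{t\mathcal B_q}$, a root-lifting map, and $e^{-t\mathcal A_q}$, the coordinatewise multipliers $z^k\mapsto e^{-tk(k-1)}z^k$.
\item Coefficientwise limits of stable polynomials are stable or zero.
\end{itemize}
Without this or some comparable stability-preserving structure, (ii) is unproved, and so are the Newton inequalities in (iii).
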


The paper is organized as follows. The proof separates coefficient positivity from real-rootedness.
In \cref{sec:ct}, Xin's formal residue theorem \cite[Theorem~3-2.7]{XinThesis} converts the generating series
into a polynomial constant term.  
In \cref{sec:gamma}, the constant term's expansion gives a finite flow formula for the gamma coefficients. 
This proves part~\textup{(i)} and the symmetry and strict unimodality in part~\textup{(iii)}.
\cref{st:section} is mainly devoted to extensive preparations for the subsequent proof of real-rootedness.
In this section, we discuss the real stability of symmetric polynomials and the preservation of real stability of polynomials under certain partial differentiation operations.
In \cref{ct:section}, we introduce a symmetric polynomial $F_{m,q}$ with $q\geq\max(1,2m-2)$ variables. We use it to complete the proof of real-rootedness. Newton's inequalities give \eqref{eq:main-newton}.
\cref{sec:consequences} presents a byproduct: we prove that every root of the Gamma polynomial is a negative real number.

\section{Formal constant terms and a change of variables}\label{sec:ct}

\subsection{The working field, constant terms, and formal residues}

We use Xin's field of iterated Laurent series \cite[Sections~2-1 and~2-2]{XinThesis}.  The same framework is used in \cite[Section~2]{XinFast} and \cite{XinEuclid}.  Empty products equal $1$ and empty sums equal $0$.

Let $K$ be a field of characteristic zero.  Starting with
$\mathcal L_0=K$, define recursively
\begin{equation}\label{eq:laurent-tower}
\mathcal L_r=K((z_1))\cdots((z_r))
             =\mathcal L_{r-1}((z_r))\qquad(r\geq1).
\end{equation}
The displayed tower fixes the order throughout.  It agrees with
\cite[Section~2-1]{XinThesis} and \cite[Section~2]{XinFast}.
For any coefficient field $\mathbb{E}$, the notation $\mathbb{E}((z))$ means the field
of series $\sum_{k\geq k_0}c_kz^k$, where $k_0\in\ZZ$ and
$c_k\in \mathbb{E}$.  Thus an element of $\mathcal L_r$ is first a Laurent
series in $z_r$, bounded below in its $z_r$ exponent, whose
coefficients are elements of $\mathcal L_{r-1}$.  The lower bounds
for exponents of earlier variables are allowed to depend on later
exponents.  For example,
\[
\frac1{z_1-z_2}=\sum_{k\geq0}z_1^{-k-1}z_2^k
               \in K((z_1))((z_2)).
\]
Its $z_1$ exponents have no uniform lower bound.  This is why a ring
in which the exponents of every variable have a common lower bound
would be insufficient here.

Every rational function in $K(z_1,\ldots,z_r)$ has a unique image
in \eqref{eq:laurent-tower}: polynomials are embedded in the
natural way and each nonzero polynomial is inverted in this field.
The field, and not just the rational function, specifies the
expansion.  For instance, the expansion of $(z_1-z_2)^{-1}$ in
$K((z_2))((z_1))$ would instead be
$-\sum_{k\geq0}z_1^kz_2^{-k-1}$.

Here is the support description of this convention.  For $\alpha,\beta\in\ZZ^r$, compare the largest index $j$ at which
$\alpha_j\ne\beta_j$, and declare $\alpha<_{\mathrm{rev}}\beta$ when $\alpha_j<\beta_j$.
A subset of $\ZZ^r$ is \emph{well ordered} in this order if every nonempty subset has a least element.  The \emph{support} of
$F=\sum_\alpha c_\alpha z_1^{\alpha_1}\cdots z_r^{\alpha_r}$
is the set of $\alpha$ for which $c_\alpha\ne0$. Xin's fundamental structure theorem
\cite[Proposition~2-1.2, p.~60]{XinThesis} states that $F$ belongs to $\mathcal L_r$ if and only if its support is well ordered.
The composition law \cite[Theorem~3-1.7, p.~113]{XinThesis} then implies that, if the least exponent of $U$ is greater than
$(0,\ldots,0)$, the series $\sum_{k\geq0}c_kU^k$ is defined in $\mathcal L_r$ for arbitrary $c_k\in K$.  Its support is well
ordered, and every coefficient receives only finitely many contributions.  In particular, factoring a nonzero polynomial
into its initial term and a factor with initial term $1$ gives its reciprocal by geometric expansion.  These facts justify the
rational expansions used below.

To include the generating variable, we use
\begin{equation}\label{eq:laurent-generating-field}
\mathcal L_r((y))=K((z_1))\cdots((z_r))((y)).
\end{equation}
In particular, $y$ is the outermost Laurent variable.  The phrase
``$y$ is smaller than all the $z_i$'' will mean this specified
formal order, without an assertion about complex absolute values.
Thus $z_j/z_i$ for $i<j$, $z_i$, and $y/z_i$ have positive least
exponent in their respective working fields.  Consequently the
expansions that occur here are
\begin{align}
\frac1{(z_i-z_j)^2}&=z_i^{-2}\sum_{f\geq0}(f+1)\left(\frac{z_j}{z_i}\right)^f&&(i<j),\label{eq:edge-expansion}\\
\frac1{(1-z_i)^a}&=\sum_{k\geq0}\binom{a+k-1}{k}z_i^k &&(a\geq1),\notag\\
\frac1{z_i-y}&=z_i^{-1}\sum_{k\geq0}\left(\frac y{z_i}\right)^k.\notag
\end{align}

For $F\in\mathcal L_r$, the notation $[z^\alpha]F$ denotes its coefficient with exponent vector $\alpha\in\ZZ^r$.
The \emph{constant term} and the \emph{formal residue} are
\[\CT_z F=[z_1^0\cdots z_r^0]F,\qquad
\Res_z F(z)\,\dd z_1\cdots\dd z_r=[z_1^{-1}\cdots z_r^{-1}]F.
\]
For a single variable, $\Res_{z_i}F\,\dd z_i$ means extraction of its coefficient of $z_i^{-1}$, leaving a series in the
remaining variables.  The differential symbols $\dd z_i$ record which coefficient is extracted; they do not denote an
integration operation.  On \eqref{eq:laurent-generating-field}, these operators act coefficientwise in $y$.

Taking a specified exponent in one variable restricts the support
to a subset of a well-ordered set.  After deleting that coordinate,
the induced order is again the reverse lexicographic order.
Thus the result lies in the iterated Laurent series field in the
remaining variables.  Repeated coefficient extractions commute:
each order selects exactly the same coefficients.  These are Xin's constant-term operators
\cite[Definition~2-1.3 and properties~P1--P3, pp.~60--61]{XinThesis}.  In particular,
\begin{equation}\label{eq:formal-ct-residue}
\CT_z F
 =\Res_z\frac{F}{z_1\cdots z_r}\,\dd z_1\cdots\dd z_r.
\end{equation}

Differentiation is termwise differentiation of Laurent series.
It is a derivation on the working field: it satisfies the sum
and product rules, and hence
$\partial_i(H^{-1})=-H^{-2}\partial_iH$ for nonzero $H$.
These assertions follow first in one Laurent variable, where
every coefficient of a product is a finite sum, and then
recursively in \eqref{eq:laurent-tower}.  Every derivative $\partial_i=\partial/\partial z_i$ fixes
$y$ and acts coefficientwise in $y$.
Writing $\theta_i=z_i\partial_i$, we have
\begin{equation}\label{eq:formal-exact-residue}
\Res_z(\partial_iF)\,\dd z_1\cdots\dd z_r=0,
\qquad
\CT_z(\theta_iF)=0.
\end{equation}
For the first equality, a derivative could contribute exponent
$-1$ in $z_i$ only by differentiating a term with exponent $0$;
that derivative is zero.  For the second equality, $\theta_i$
multiplies a monomial by its $z_i$ exponent, which is zero at a
constant term.  This is the formal integration-by-parts rule of
\cite[Section~2-2, Rule~3, p.~66]{XinThesis}.

\subsection{An admissible substitution}

The following two lemmas are derived from Xin's composition lemma and residue theorem \cite[Lemma~3-2.6 and Theorem~3-2.7, pp.~123--125]{XinThesis}.
For clarity, the result is stated in constant-term form in \cite{XinThesis}, with a logarithmic Jacobian.  Applying it to
$x_1\cdots x_r yF(x,y)$ converts it to the ordinary residue formula by \eqref{eq:formal-ct-residue}.

\begin{lemma}\label{lem:formal-translation}
Let $F\in K((x_1))\cdots((x_r))((y))$.  The substitution
$x_i=v_i+y$ is defined by its Taylor expansion in $y$ and gives
an isomorphism to $K((v_1))\cdots((v_r))((y))$.
It preserves formal residues:
\begin{equation*}
\Res_x F(x,y)\,\dd x_1\cdots\dd x_r=\Res_v F(v_1+y,\ldots,v_r+y,y)\,\dd v_1\cdots\dd v_r.
\end{equation*}
For rational $F$, the substituted expression on the right is the
unique expansion of the indicated rational function in the stated
working field.
\end{lemma}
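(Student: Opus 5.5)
We need to prove Lemma~\ref{lem:formal-translation}: the substitution $x_i=v_i+y$ is well defined by Taylor expansion in $y$, gives an isomorphism onto $K((v_1))\cdots((v_r))((y))$, and preserves formal residues. The plan is to build the substitution first on the inner field $\mathcal L_r=K((x_1))\cdots((x_r))$ with $y$ treated as a parameter, and then extend it coefficientwise in $y$.

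\emph{Defining the map.} For $G\in K((x_1))\cdots((x_r))$, set
\[
\Phi(G)=\sum_{\beta\in\NN^r}\frac{y^{|\beta|}}{\beta!}\,(\partial^\beta G)(v),
\]
where $\partial^\beta G$ is the termwise derivative, which again lies in $\mathcal L_r$, and then $x$ is renamed $v$. Since $y$ is the outermost variable, each coefficient of $y^k$ is a finite sum over $|\beta|=k$. So $\Phi(G)$ lies in $K((v_1))\cdots((v_r))[[y]]$. For $F=\sum_{k\geq k_0}F_k(x)y^k$ we define $\Phi(F)=\sum_k y^k\Phi(F_k)$. Each power of $y$ receives only finitely many contributions, because all $\Phi(F_k)$ start at $y^0$.

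\emph{Algebraic properties.} $\Phi$ is $K$-linear and additive. It is multiplicative by the Leibniz rule $\partial^\beta(GH)=\sum\binom{\beta}{\gamma}\partial^\gamma G\,\partial^{\beta-\gamma}H$, which holds because differentiation is a derivation, as noted above. The inverse map is the same construction with $y$ replaced by $-y$. The composite of the two maps is the identity by the binomial identity $\sum_{\gamma\leq\beta}\frac{(-1)^{|\gamma|}}{\gamma!(\beta-\gamma)!}=\delta_{\beta,0}$. Hence $\Phi$ is a field isomorphism.

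Since $\Phi$ is a continuous ring homomorphism fixing $y$ and sending $x_i\mapsto v_i+y$, it sends the embedded image of any polynomial $P(x,y)$ to $P(v+y,y)$. It therefore sends the expansion of $1/Q$ to an inverse of $P(v+y,y)$ with $P=Q$. Inverses in a field are unique, so this is the unique expansion of the rational function in the target field, which proves the last sentence of the lemma.

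\emph{Residue preservation.} Write $F=\sum_k F_ky^k$. It suffices to show, for each fixed coefficient $G$ of $y^k$ (and tracking the shifts in $y$), that
\[
\Res_v\Phi(G)\,\dd v=\Res_x G\,\dd x
\]
at each power of $y$. The $\beta=0$ term gives exactly $\Res_x G$. Every term with $\beta\ne0$ is $\frac{y^{|\beta|}}{\beta!}\partial^\beta G(v)$, and some $\beta_i\geq1$ makes it a $\partial_i$-derivative, so its residue vanishes by \eqref{eq:formal-exact-residue}. Collecting powers of $y$ then gives the identity.

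The main obstacle is well-definedness inside the iterated field: the Taylor expansion must produce elements of the correct tower, and the infinite sums must have well-ordered support with finitely many contributions to each coefficient. This is handled by the grading in $y$ as above; alternatively, one can cite Xin's composition lemma \cite[Lemma~3-2.6]{XinThesis} directly.
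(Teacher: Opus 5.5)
Your proof is correct, but it takes a different route from the paper. The paper gives no independent proof of this lemma. It derives it from Xin's composition lemma and general residue theorem \cite[Lemma~3-2.6 and Theorem~3-2.7]{XinThesis}. That theorem is stated in constant-term form with a logarithmic Jacobian, and the paper converts it to residue form by applying it to $x_1\cdots x_r\,yF$ and using \eqref{eq:formal-ct-residue}.

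You instead build the substitution by hand:
\begin{itemize}
\item the grading in the outermost variable $y$ makes each coefficient of the Taylor series a finite sum;
\item the Leibniz rule gives multiplicativity;
\item the Taylor map with $-y$ is a two-sided inverse, by $\sum_{\gamma\leq\beta}(-1)^{|\gamma|}/(\gamma!(\beta-\gamma)!)=\delta_{\beta,0}$;
\item uniqueness of inverses in a field identifies images of rational functions with their expansions;
\item residue invariance reduces to the fact that every Taylor term with $\beta\neq0$ is a $\partial_i$-derivative, whose residue vanishes by \eqref{eq:formal-exact-residue}.
\end{itemize}

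Your approach is self-contained. It uses only facts already set up in \cref{sec:ct}: termwise differentiation is a derivation, and exact derivatives have zero residue. It also makes the unit Jacobian of a translation transparent.

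The citation buys generality. Xin's theorem covers admissible substitutions with nontrivial Jacobians, which is what the companion \cref{lem:formal-origin-substitution} needs, with $v_i=az_i/(1+z_i)$ and factor $a/(1+z_i)^2$. Your Taylor-exactness argument does not by itself handle that case. It would need an extra step, such as treating $v^{-1}v'$ as a logarithmic derivative.

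A minor remark: you do not need continuity of $\Phi$ for the polynomial step. A ring homomorphism fixing $K$ and $y$ with $\Phi(x_i)=v_i+y$ already suffices.
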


\begin{lemma}\label{lem:formal-origin-substitution}
Set $a=1-y$.  For $F\in K((v_1))\cdots((v_r))((y))$, the substitution
\[
v_i=\frac{az_i}{1+z_i}\qquad(1\leq i\leq r)
\]
has a well-defined expansion in
$K((z_1))\cdots((z_r))((y))$.  With this interpretation,
\begin{equation*}
\Res_vF(v,y)\,\dd v_1\cdots\dd v_r
=\Res_zF\left(\frac{az_1}{1+z_1},\ldots,\frac{az_r}{1+z_r},y\right)\prod_{i=1}^r\frac{a}{(1+z_i)^2}\,\dd z_1\cdots\dd z_r.
\end{equation*}
For rational functions, this interpretation agrees with their
unique expansion in the stated field.
\end{lemma}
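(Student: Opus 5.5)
The plan is to reduce everything to a one-variable change of variables and then iterate over $i=1,\ldots,r$. The substitution $v_i=az_i/(1+z_i)$ involves only $z_i$, and the multivariate residue is the composite of the single-variable residue extractions, which commute. So it suffices to prove the statement for a single variable $v=\phi(z)$, with $\phi(z)=az/(1+z)$. Here the coefficients lie in the field of the remaining variables together with $y$, all treated as constants. One point needs care: those coefficients also involve $y$, and $y$ is outermost. I will therefore organise the argument so that $y$ stays outermost throughout. The residue operators act coefficientwise in $y$, as stated before \eqref{eq:formal-exact-residue}.

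\emph{Well-definedness.} Since $a=1-y$ has least term $1$, it is a unit in $K[[y]]$, with $a^{-1}=\sum_{j\ge0}y^j$. Hence
\[
\phi(z)=az(1+z)^{-1},\qquad \phi(z)^{-1}=a^{-1}(z^{-1}+1).
\]
Thus every integer power $\phi^k$ equals $a^kz^k$ times a power series in $z$ with constant term $1$. For each monomial $v^k$ in $F$, the substituted term $\phi^k$ therefore has support bounded below by $z^k$ in the $z$-direction, and it contributes only nonnegative $y$-powers through $a^k$. This is exactly the setting of Xin's composition law \cite[Theorem~3-1.7]{XinThesis} and composition lemma \cite[Lemma~3-2.6]{XinThesis}. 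The leading monomial of $\phi$ is $z$, with a unit coefficient, so the least exponent of $\phi/z-a$ is positive. Consequently each coefficient of the substituted series receives only finitely many contributions, the support stays well ordered, and the map is a field homomorphism $K((v_1))\cdots((v_r))((y))\to K((z_1))\cdots((z_r))((y))$. Because it is a homomorphism, for rational $F$ it sends the image of $F$ to the unique expansion of the substituted rational function.

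\emph{Residue identity.} Both sides are $K$-linear. Both are also compatible with the infinite sums defining elements of the field, since each coefficient of the output receives only finitely many contributions. It therefore suffices to check monomials $F=v^k$, with coefficients independent of $v$. Using $\phi'(z)=a/(1+z)^2$, there are two cases.
\begin{itemize}
\item For $k\ne-1$ we have $\phi^k\phi'=\partial_z\bigl(\phi^{k+1}/(k+1)\bigr)$, a derivative of an element of the working field. By \eqref{eq:formal-exact-residue} its $z$-residue is $0$, which matches $\Res_v v^k\,\dd v=0$.
\item For $k=-1$, the logarithmic derivative gives $\phi'/\phi=z^{-1}-(1+z)^{-1}$, whose residue in $z$ is $1=\Res_v v^{-1}\,\dd v$.
\end{itemize}
Iterating this over the $r$ variables produces the product of Jacobians $\prod_i a/(1+z_i)^2$.

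I expect the main obstacle to be the summability step, namely justifying that $\Res$ commutes with the infinite sum $F=\sum c_kv^k$. Here $k$ is unbounded above and the coefficients $c_k$ themselves involve $y$ and the other variables, so some finiteness is required. My approach is to fix an output exponent $(z_1^{-1},\ldots,z_r^{-1},y^s)$. Since $a^k$ carries only nonnegative $y$-powers, only the $y$-coefficients of $F$ of order at most $s$ can contribute, and there are finitely many of these. Within each such coefficient, fix the exponent $-1$ in $z_i$. The term $\phi^k$ starts at $z^k$, so only $k\le-1$ can contribute. Those $k$ are bounded below by the well-ordering of the support in $v$, which leaves finitely many. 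This is precisely the finiteness provided by Xin's composition law, and I will cite it rather than reprove it.
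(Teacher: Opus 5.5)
Your proposal is correct, but it reaches the residue identity by a different route than the paper. The paper gives no separate computation. It invokes Xin's composition lemma and his general residue theorem \cite[Lemma~3-2.6 and Theorem~3-2.7]{XinThesis}, which are stated in constant-term form with a logarithmic Jacobian, and passes to the residue form through \eqref{eq:formal-ct-residue}.

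You use Xin's composition law only for well-definedness and for strong linearity of the substitution, and you prove the change of variables yourself. Summability reduces the identity to monomials, and the Jacobian factors because the substitution acts coordinatewise. Each one-variable residue then follows from $\Res\,\partial=0$ in \eqref{eq:formal-exact-residue} when $k\neq-1$, and from $\phi'/\phi=z^{-1}-(1+z)^{-1}$ when $k=-1$. You could even skip the derivative trick: $\phi^k\phi'=a^{k+1}z^k(1+z)^{-k-2}$, whose $z^{-1}$ coefficient is $1$ for $k=-1$ and $0$ otherwise.

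What each route buys: the paper's citation is shorter and covers general admissible substitutions with a full Jacobian determinant. Your argument is self-contained given the tools already set up in \cref{sec:ct}. It also shows why the factor is exactly $\prod_i a/(1+z_i)^2$ with no extra normalization: the $k=-1$ residue equals $1$ precisely because $\phi(z_i)$ has leading monomial $z_i$.

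One point of wording: an element of $K((v_1))\cdots((v_r))((y))$ is not a Laurent series in a single $v_i$ over a field of the remaining variables, since its $v_i$-exponents need not be bounded below. It is a sum over all $k\in\ZZ$. Your finiteness argument still works, because once the output exponents of $y$ and of the other variables are fixed, the relevant $v_i$-exponents form a well-ordered subset of $\ZZ$ and are therefore bounded below.
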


\begin{corollary}\label{cor:formal-mobius}
For $F\in K(x_1,\ldots,x_r,y)$, expand $F$ in $K((x_1))\cdots((x_r))((y))$.  Then
\begin{equation*}
\Res_xF(x,y)\,\dd x_1\cdots\dd x_r=\Res_z F\left(\frac{z_1+y}{1+z_1},\ldots,\frac{z_r+y}{1+z_r},y\right)\cdot\prod_{i=1}^r\frac{1-y}{(1+z_i)^2}\,\dd z_1\cdots\dd z_r,
\end{equation*}
where the right-hand side is expanded in $K((z_1))\cdots((z_r))((y))$.
\end{corollary}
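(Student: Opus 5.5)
The plan is to compose the two preceding lemmas. Starting from $F\in K(x_1,\ldots,x_r,y)$ expanded in $K((x_1))\cdots((x_r))((y))$, we first apply \cref{lem:formal-translation} with the translation $x_i=v_i+y$. This rewrites the residue in $x$ as the residue in $v$ of $G(v,y):=F(v_1+y,\ldots,v_r+y,y)$. Since $F$ is rational, $G$ is a rational function in $K(v_1,\ldots,v_r,y)$, and its expansion in $K((v_1))\cdots((v_r))((y))$ is its unique expansion there.

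We then apply \cref{lem:formal-origin-substitution} to $G$ with $v_i=az_i/(1+z_i)$ and $a=1-y$. The composite substitution is
\[
x_i=\frac{(1-y)z_i}{1+z_i}+y=\frac{z_i-yz_i+y+yz_i}{1+z_i}=\frac{z_i+y}{1+z_i},
\]
which is the form in the statement. The Jacobian factor is exactly $\prod_i a/(1+z_i)^2=\prod_i(1-y)/(1+z_i)^2$, so the right-hand sides agree as rational functions.

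It remains to check that both descriptions give the same element of $K((z_1))\cdots((z_r))((y))$. \cref{lem:formal-origin-substitution} says that for rational functions its interpretation agrees with the unique expansion in that field. Since $G\bigl(az/(1+z),y\bigr)\prod_i a(1+z_i)^{-2}$ equals the rational function $F\bigl((z+y)/(1+z),y\bigr)\prod_i(1-y)(1+z_i)^{-2}$, its expansion is the one asserted.

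The only point needing care is this compatibility of expansions, namely that composing the series substitutions coincides with expanding the composite rational function directly. It follows from the rational-function clauses of both lemmas together with uniqueness of the embedding of $K(z,y)$ into the iterated Laurent field. No new analytic input is expected.
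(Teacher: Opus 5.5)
Your proposal is correct and follows the paper's proof: apply \cref{lem:formal-translation} with $x_i=v_i+y$, then \cref{lem:formal-origin-substitution} with $v_i=(1-y)z_i/(1+z_i)$, and check that the composite is $x_i\mapsto(z_i+y)/(1+z_i)$ with Jacobian factor $\prod_i(1-y)/(1+z_i)^2$, the translation contributing derivative one. Your compatibility check via the rational-function clauses of the two lemmas makes the same point as the paper's remark that both lemmas use the same outer variable $y$, so their expansion conventions agree.
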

\begin{proof}
Apply \cref{lem:formal-translation} with $x_i=v_i+y$,
and then \cref{lem:formal-origin-substitution} with
$v_i=(1-y)z_i/(1+z_i)$.  Their composite sends
\[
x_i\longmapsto y+\frac{(1-y)z_i}{1+z_i}=\frac{z_i+y}{1+z_i}.
\]
The first substitution has derivative one, and the second
supplies the displayed factor $(1-y)/(1+z_i)^2$.  Both lemmas
specify the same outer variable $y$, so their expansion conventions
agree.  This proves the identity in the asserted working fields.
\end{proof}

\subsection{A finite constant-term functional}

For $r\geq1$, set
\[
\Delta_r(z_1,\ldots,z_r)=\prod_{1\leq i<j\leq r}(z_i-z_j),
\qquad \Delta_1=1.
\]
The functional used throughout the paper is
\begin{equation}\label{eq:T-definition}
T_m(f)=\CT_z\frac{f(z)}{\Delta_m(z)^2},
\end{equation}
with the rational function expanded in
$K((z_1))\cdots((z_m))$.  When $f$ also has polynomial
parameters, $T_m$ acts coefficientwise in those parameters.  Thus
an auxiliary alphabet such as $b_1,\ldots,b_q$, or a homogenizing
variable $w$, is not included among the variables whose constant
term is extracted.  Equivalently, for such a polynomial one may
extend $K$ by adjoining the auxiliary parameters and take the
same $z$ constant term.
The following description shows directly that its value on a
polynomial is \textbf{a finite sum}.

\begin{definition}\label{def:flows}
The directed graph used here has vertex set
$\{1,\ldots,m\}$ and one directed edge $i\to j$ whenever
$i<j$.  A \emph{nonnegative integer flow} on this graph is
simply an assignment of an integer $f_{ij}\geq0$ to every
edge.  Its \emph{netflow} at vertex $i$ is its outgoing amount
minus its incoming amount,
\[
\sum_{j>i}f_{ij}-\sum_{j<i}f_{ji}.
\]
A \emph{prefix cut} at $k$ separates the vertices
$\{1,\ldots,k\}$ from $\{k+1,\ldots,m\}$; its flow is
$\sum_{i\leq k<j}f_{ij}$.
Conservation of flow is not assumed: the netflows are determined by the coefficient equations below.
\end{definition}

We now consider $T_m(z^d)$.
For a monomial $z^d=z_1^{d_1}\cdots z_m^{d_m}$, multiplication
of the edge expansions \eqref{eq:edge-expansion} gives exponent
\begin{align*}
d_i-2(m-i)-\sum_{j>i}f_{ij}+\sum_{j<i}f_{ji}\quad \text{ at } z_i.
\end{align*}
The term contributes to $T_m(z^d)$ exactly when all
these exponents are zero, that is, when
\begin{equation}\label{eq:monomial-balance}
d_i=2(m-i)+\sum_{j>i}f_{ij}-\sum_{j<i}f_{ji}\qquad(1\leq i\leq m).
\end{equation}
For such a flow, its contribution is the positive integer
$\prod_{i<j}(f_{ij}+1)$.  Summing
\eqref{eq:monomial-balance} over $i=1,\ldots,k$ cancels every
edge entirely inside the prefix.  Each crossing edge occurs once
with positive sign, and each edge outside the prefix does not
occur.  Since
$\sum_{i=1}^k2(m-i)=2km-k(k+1)$, we obtain
\begin{equation}\label{eq:general-cut}
\sum_{i\leq k<j}f_{ij}
 =\sum_{i=1}^k d_i-2km+k(k+1)\qquad(1\leq k<m).
\end{equation}
For fixed $d$, the right-hand side is fixed.  If it is negative
for any $k$, there is no contributing flow.  Otherwise, for
$i<j$ the edge crosses the prefix cut at $k=i$, so
\[
0\leq f_{ij}\leq\sum_{p\leq i<q}f_{pq}
             =\sum_{p=1}^i d_p-2im+i(i+1).
\]
Every edge therefore has a fixed finite upper bound, and there
are only $\binom m2$ edges.  \textbf{This proves finiteness}.  For $m=1$
there are no edges, and the single possible empty flow contributes
precisely when $d_1=0$, giving the same conclusion.  By linearity,
$T_m$ of any polynomial is a finite linear combination of its
coefficients.  In particular it sends a polynomial with nonnegative
integral coefficients to a nonnegative integer, coefficientwise
in any additional polynomial parameters.

\subsection{A simple constant-term form}

We now work over $K=\mathbb Q$ and use the original expansion
field
$\mathbb Q((x_2))\cdots((x_n))$ in
\eqref{eq:original}.  Relabel $x_2,\ldots,x_{n-1}$ as
$x_1,\ldots,x_m$ and put $u=x_n$.  There are two cancellations
in \eqref{eq:original}: the numerator factor $1-u$ changes
$(1-u)^{-3}$ to $(1-u)^{-2}$, and each numerator factor
$x_i-u$ cancels one of the two corresponding denominator
factors.  Therefore
\begin{equation}\label{eq:original-cancelled}
h_{m+2}(t)
 =[u^t]\frac1{(1-u)^2}\CT_x
 \frac{\prod_{i=1}^m x_i^{m+1}}
      {\prod_{i=1}^m(1-x_i)^3(x_i-u)\,\Delta_m(x)^2}.
\end{equation}
Here $[u^t]$ extracts an ordinary nonnegative power: in
$\mathbb Q((x_1))\cdots((x_m))((u))$ each factor
$(x_i-u)^{-1}=\sum_{k\geq0}x_i^{-k-1}u^k$ has no negative
powers of $u$, and neither does $(1-u)^{-2}$.
Thus the entire expression before $[u^t]$ lies in
$\mathbb Q[[u]]$ after its $x$ constant term is taken.

Replacing $u$ by $y$ in this formal power series and summing its
coefficients gives
\begin{equation}\label{eq:generating-ct}
\sum_{t\geq0}h_{m+2}(t)y^t
 =\frac1{(1-y)^2}\CT_x
 \frac{\prod_{i=1}^m x_i^{m+1}}
      {\prod_{i=1}^m(1-x_i)^3(x_i-y)\,\Delta_m(x)^2}.
\end{equation}
This equality lies in $\mathbb Q[[y]]$.  The constant term on
its right is taken in $\mathbb Q((x_1))\cdots((x_m))((y))$ and acts coefficientwise
in $y$.  At each $y$ exponent, only finitely many exponent choices in the
$m+1$ factors involving $y$ occur.  The sign agrees with
\eqref{eq:original}, which already includes the normalization
in \eqref{eq:birkhoff-component}.

\begin{proposition}\label{prop:positive-ct}
Let $m\geq1$ and use the parameters in
\eqref{eq:parameters}.  With constant terms taken in the field
$\mathbb Q((z_1))\cdots((z_m))((y))$, one has
\begin{equation}\label{eq:positive-ct}
h_{m+2}^*(y)=\CT_z\frac{\prod_{i=1}^m(z_i+y)^m(1+z_i)^m}{\Delta_m(z)^2}
=T_m\left(\prod_{i=1}^m\bigl(z_i^2+(1+y)z_i+y\bigr)^m\right).
\end{equation}
In particular, $h_{m+2}^*(y)$ is a polynomial with nonnegative
integral coefficients.
\end{proposition}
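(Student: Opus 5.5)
The plan is to start from the generating-function identity \eqref{eq:generating-ct} and apply the Möbius-type substitution of \cref{cor:formal-mobius} with $r=m$. First I rewrite the $x$ constant term as a residue using \eqref{eq:formal-ct-residue}. This divides the integrand by $x_1\cdots x_m$, so the rational function to be substituted becomes
\[
G(x,y)=\frac{\prod_{i=1}^m x_i^{m}}{\prod_{i=1}^m(1-x_i)^3(x_i-y)\,\Delta_m(x)^2}.
\]
It is expanded in $\mathbb Q((x_1))\cdots((x_m))((y))$, which is exactly the field required by \cref{cor:formal-mobius}.

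Next I substitute $x_i=(z_i+y)/(1+z_i)$ and compute each factor. A direct computation gives
\[
x_i-y=\frac{(1-y)z_i}{1+z_i},\qquad 1-x_i=\frac{1-y}{1+z_i},\qquad x_i-x_j=\frac{(1-y)(z_i-z_j)}{(1+z_i)(1+z_j)}.
\]
Together with the Jacobian $\prod_i(1-y)/(1+z_i)^2$, I then track the exponents factor by factor.
\begin{itemize}
\item[(a)] The power of $1+z_i$ is $-m$ from the numerator, $+3+1+2(m-1)$ from the denominator, and $-2$ from the Jacobian, for a total of $m$.
\item[(b)] The power of $1-y$ is $-(3m+m+m(m-1))+m=-(m^2+2m)$.
\item[(c)] Together with the prefactor $(1-y)^{-2}$ in \eqref{eq:generating-ct}, the total $y$-power is $(1-y)^{-(m^2+2m+2)}=(1-y)^{-((m+1)^2+1)}$. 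This is exactly cancelled by the factor in \eqref{eq:hstar-definition}, since $n-1=m+1$.
\item[(d)] The leftover factor $1/\prod_i z_i$ turns the residue back into a $z$ constant term.
\end{itemize}
This yields the first equality in \eqref{eq:positive-ct}. The second equality follows from the factorization $(z_i+y)(1+z_i)=z_i^2+(1+y)z_i+y$.

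The step I expect to require the most care is justifying that the expansions agree. The multiplication by $(1-y)^{(m+1)^2+1}$ and the pulling out of powers of $(1-y)$ must be legitimate inside $\mathcal L_m((y))$. This holds because these are units of $\mathbb Q[[y]]\subset\mathcal L_m((y))$, and the constant-term operator is $\mathbb Q((y))$-linear (it acts coefficientwise in $y$). Moreover, the resulting integrand is
\[
\frac{P(z,y)}{\Delta_m(z)^2},
\]
where $P$ is a polynomial in $z$ and $y$ and all factors $1+z_i$ have cancelled. Its expansion in $\mathbb Q((z_1))\cdots((z_m))((y))$ is therefore the expansion of $1/\Delta_m(z)^2$ via \eqref{eq:edge-expansion}, multiplied by a polynomial in $y$. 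Hence the constant term coincides with $T_m$ applied coefficientwise in the parameter $y$, as in \eqref{eq:T-definition}.

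Finally, the coefficients of $\prod_i(z_i^2+(1+y)z_i+y)^m$ are polynomials in $y$ with nonnegative integer coefficients, and there are finitely many of them. The finiteness and positivity of $T_m$ on monomials, established via the flow description after \eqref{eq:general-cut}, then shows that $h_{m+2}^*(y)$ is a finite nonnegative integer combination of polynomials in $y$ with nonnegative integer coefficients. It is therefore a polynomial with nonnegative integral coefficients.
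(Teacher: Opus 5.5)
Your proposal is correct and follows essentially the same route as the paper: rewrite the constant term in \eqref{eq:generating-ct} as a formal residue, apply the substitution $x_i=(z_i+y)/(1+z_i)$ from \cref{cor:formal-mobius}, track the exponents of $1+z_i$ (total $m$) and of $1-y$ (total $-((m+1)^2+1)$), and conclude polynomiality and nonnegativity from the finite-flow description of $T_m$ applied coefficientwise in $y$. Your extra remarks on the $\mathbb Q((y))$-linearity of $\CT_z$ and on cancelling the factors $1+z_i$ inside the working field are correct, and they only make explicit what the paper leaves implicit.
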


\begin{proof}
Use \eqref{eq:formal-ct-residue} to express
\eqref{eq:generating-ct} as the formal residue
\begin{equation}\label{eq:ordinary-density}
\Res_x\frac1{(1-y)^2}\,\frac{\prod_{i=1}^m x_i^m}{\prod_{i=1}^m(1-x_i)^3(x_i-y)\,\Delta_m(x)^2}\,\dd x_1\cdots\dd x_m.
\end{equation}
The power $x_i^m$ instead of $x_i^{m+1}$ is due to the factor
$x_i^{-1}$ in \eqref{eq:formal-ct-residue}.
By \cref{cor:formal-mobius}, the admissible change is
\begin{equation}\label{eq:mobius-substitution}
x_i=\frac{z_i+y}{1+z_i}\qquad(1\leq i\leq m).
\end{equation}
All required identities are the following rational identities,
interpreted through their unique expansions in the working fields:
\begin{align*}
x_i^m=\frac{(z_i+y)^m}{(1+z_i)^m},\quad x_i=\frac{1-y}{(1+z_i)^2}\,\dd z_i,\quad
1-x_i=\frac{1-y}{1+z_i},\quad
x_i-y=\frac{(1-y)z_i}{1+z_i},
\end{align*}
and 
\begin{align*}
x_i-x_j=\frac{(1-y)(z_i-z_j)}{(1+z_i)(1+z_j)}.
\end{align*}
In particular,
\[
\Delta_m(x)^2
 =(1-y)^{m(m-1)}\Delta_m(z)^2
       \prod_{i=1}^m(1+z_i)^{-2(m-1)}.
\]
Each index belongs to $m-1$ pairs, which explains its exponent
$-2(m-1)$ in this product.

For \eqref{eq:ordinary-density}, to collect the factors, for a fixed $i$ the terms $x_i^m$, $\dd x_i$,
$(1-x_i)^{-3}$, $(x_i-y)^{-1}$, and $\Delta_m(x)^{-2}$
contribute, respectively,
\[
-m,\quad -2,\quad 3,\quad 1,\quad 2(m-1)
\]
to the exponent of $1+z_i$.  Their sum is $m$.  The prefactor, differentials, the factors $(1-x_i)^{-3}$ and
$(x_i-y)^{-1}$, and the squared Vandermonde contribute to the
exponent of $1-y$ as follows:
\[
-2+m-3m-m-m(m-1)
 =-m^2-2m-2=-\bigl((m+1)^2+1\bigr).
\]
Finally, $(x_i-y)^{-1}$ supplies one factor $z_i^{-1}$,
and $x_i^m$ supplies $(z_i+y)^m$.  Consequently
\eqref{eq:ordinary-density} becomes
\[\Res_z (1-y)^{-((m+1)^2+1)}\frac{\prod_{i=1}^m(z_i+y)^m(1+z_i)^m}{\Delta_m(z)^2}\prod_{i=1}^m\frac{\dd z_i}{z_i}.
\]
Its formal residue is the indicated constant term multiplied
by the displayed power of $1-y$.
Since $n=m+2$, the exponent in
\eqref{eq:hstar-definition} is $(m+1)^2+1$; multiplying by
that factor proves the first equality in
\eqref{eq:positive-ct}.  The identity
$(z_i+y)(1+z_i)=z_i^2+(1+y)z_i+y$ proves the second.
The polynomial numerator has nonnegative integral coefficients, and the finite-flow argument following \eqref{eq:general-cut} applies coefficientwise in $y$. It follows that this constant term belongs to $\mathbb Z_{\geq0}[y]$, proving the last assertion.
\end{proof}

\section{The gamma coefficients}\label{sec:gamma}

\subsection{Expression of the gamma coefficients}

Fix $m\geq1$.  We expand quotients of polynomials in
$\mathbb Q[y,z_1,\ldots,z_m]$ by $\Delta_m(z)^2$ in $\mathbb Q(y)((z_1))\cdots((z_m))$, and $T_m$ extracts their
$z$ constant terms.  Because the denominator is independent of
$y$ and the numerator is polynomial in $y$, this agrees with
coefficientwise extraction in
$\mathbb Q((z_1))\cdots((z_m))((y))$.

At each vertex $i\in\{1,\ldots,m\}$, expand
\begin{equation}\label{eq:trinomial}
\bigl(z_i^2+(1+y)z_i+y\bigr)^m
=\sum_{\substack{a_i,b_i,c_i\geq0\\a_i+b_i+c_i=m}}
\frac{m!}{a_i!b_i!c_i!}
z_i^{2a_i+b_i}(1+y)^{b_i}y^{c_i}.
\end{equation}
Here $a_i,b_i,c_i$ count the choices of $z_i^2$, $(1+y)z_i$,
and $y$, respectively, among the $m$ factors. With $f_{ij}\in\NN$ as in
\eqref{eq:edge-expansion}, the denominator expands as
\begin{equation}\label{eq:flow-denominator}
\frac1{\Delta_m(z)^2}
=\left(\prod_{i=1}^m z_i^{-2(m-i)}\right)
\sum_{(f_{ij})\in\NN^{\binom m2}}
\left(\prod_{i<j}(f_{ij}+1)\right)
\prod_{i<j}\left(\frac{z_j}{z_i}\right)^{f_{ij}}.
\end{equation}
Regard $f=(f_{ij})_{i<j}$ as a flow in the directed graph of \cref{def:flows}.  The exponent of $z_i$ in a term
of the product of \eqref{eq:trinomial} and
\eqref{eq:flow-denominator} is
$2a_i+b_i-2(m-i)-\sum_{j>i}f_{ij}+\sum_{j<i}f_{ji}$.
Thus a term contributes precisely when
\begin{equation}\label{eq:balance}
d_i:=2a_i+b_i
=2(m-i)+\sum_{j>i}f_{ij}-\sum_{j<i}f_{ji}.
\end{equation}
Call $(f,a,b,c)$ \emph{admissible} if its entries are
nonnegative integers satisfying \eqref{eq:balance} and
$a_i+b_i+c_i=m$ at every vertex.  Its weight is
\[
 \left(\prod_{i<j}(f_{ij}+1)\right)
 \left(\prod_i\frac{m!}{a_i!b_i!c_i!}\right).
\]
This weight is a positive integer.

\begin{lemma}\label{lem:finite-gamma}
There are only finitely many arrays $(f,a,b,c)$ of nonnegative
integers satisfying \eqref{eq:balance} and
$a_i+b_i+c_i=m$.  More precisely, if
$F_k=\sum_{i\leq k<j}f_{ij}$, then
\begin{equation}\label{eq:cut-bound}
F_k=\sum_{i=1}^k d_i-2km+k(k+1),\qquad
0\leq F_k\leq k(k+1)\quad(0\leq k\leq m).
\end{equation}
\end{lemma}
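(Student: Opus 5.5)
The plan is to prove the cut identity, bound each cut from both sides, and then bound every individual entry of the array.

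\emph{Cut identity.} Sum the balance equations \eqref{eq:balance} over $i=1,\ldots,k$, exactly as in the derivation of \eqref{eq:general-cut}. An edge with both endpoints in the prefix appears once with each sign and cancels. An edge $i\to j$ with $i\leq k<j$ appears once with a plus sign, and edges outside the prefix do not appear. Since $\sum_{i=1}^k 2(m-i)=2km-k(k+1)$, this gives
\[
F_k=\sum_{i=1}^k d_i-2km+k(k+1).
\]
The cases $k=0$ and $k=m$ give $F_0=F_m=0$: for $k=m$ there are no crossing edges, and the right-hand side is then forced to vanish.

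\emph{Bounds on $F_k$.} Nonnegativity $F_k\geq0$ is immediate, since every $f_{ij}\geq0$. For the upper bound, use $a_i+b_i+c_i=m$ with $c_i\geq0$ to get $d_i=2a_i+b_i\leq 2(a_i+b_i)\leq 2m$. Summing over $i\leq k$ gives $\sum_{i=1}^k d_i\leq 2km$, and substituting into the cut identity yields $F_k\leq k(k+1)$.

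\emph{Finiteness.} Every edge $i\to j$ with $i<j$ crosses the prefix cut at $k=i$, so $0\leq f_{ij}\leq F_i\leq i(i+1)$. Hence the flow $f$ ranges over a finite set. The vertex data $(a_i,b_i,c_i)$ are nonnegative integers summing to $m$, so they range over a finite set as well. Only finitely many arrays are possible in total, and \eqref{eq:balance} merely restricts this finite set further.

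I expect no step to be a real obstacle. The only point that needs care is that the upper bound on $d_i$ uses $c_i\geq0$ together with $a_i+b_i\leq m$; this is what makes the bound on $F_k$ uniform and independent of $y$.
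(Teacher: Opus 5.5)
Your proposal is correct and follows essentially the same route as the paper. You derive the cut identity by summing \eqref{eq:balance} over a prefix, bound $F_k$ above using $d_i=2a_i+b_i\leq 2m$, and bound each edge by $f_{ij}\leq F_i\leq i(i+1)$. Together with the finitely many vertex triples, this gives finiteness, and the empty cuts $k=0,m$ are handled just as in the paper's proof.
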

\begin{proof}
Sum \eqref{eq:balance} over $1\leq i\leq k$.  Internal
flows cancel, and the remaining flow is $F_k$, giving the
equality in \eqref{eq:cut-bound}.  Since
$0\leq d_i=2a_i+b_i\leq2m$, the right-hand side of \eqref{eq:cut-bound} is at most
$k(k+1)$; nonnegativity follows from the definition of $F_k$.
Each edge $f_{ij}$ crosses the cut at $k=i$, so
$f_{ij}\leq F_i\leq i(i+1)$.  The flows and the vertex
triples therefore range over finite sets.  The same argument
includes the empty cuts $k=0,m$ and the case $m=1$.

At the last vertex,
$0\leq d_m=-\sum_{i<m}f_{im}\leq0$.  Hence $d_m=0$ and
every incoming edge at $m$ has zero flow.  Since
$d_m=2a_m+b_m$ with nonnegative entries, $a_m=b_m=0$ and
$c_m=m$.
\end{proof}

\cref{lem:finite-gamma} shows that \eqref{eq:positive-ct} is a polynomial of $y$.

For an admissible array, set
\[
\mathsf A=\sum_{i=1}^{m} a_i,\qquad
\mathsf B=\sum_{i=1}^{m} b_i,\qquad
\mathsf C=\sum_{i=1}^{m} c_i.
\]
Summing \eqref{eq:balance} over all vertices cancels the flows
and gives
\begin{equation}\label{eq:global-sums}
2\mathsf A+\mathsf B=m(m-1),\qquad
\mathsf A+\mathsf B+\mathsf C=m^2=D.
\end{equation}
Consequently,
\begin{equation}\label{eq:gamma-sums}
\mathsf C=m+\mathsf A,
\qquad \mathsf B=L-2\mathsf C.
\end{equation}
An admissible array therefore contributes its weight times
\begin{equation}\label{eq:one-gamma-term}
y^{\mathsf C}(1+y)^{L-2\mathsf C}.
\end{equation}

\begin{lemma}\label{lem:range}
Every admissible array satisfies
$N\leq\mathsf C\leq N+e$.
\end{lemma}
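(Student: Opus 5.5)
The two bounds come from different constraints. The upper bound comes from $\mathsf B\ge 0$ via \eqref{eq:gamma-sums}. The lower bound comes from the prefix-cut identity \eqref{eq:cut-bound} of \cref{lem:finite-gamma}, combined with a pointwise bound on $d_i$ in terms of $a_i$.

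\emph{Upper bound.} By \eqref{eq:gamma-sums}, $\mathsf B=L-2\mathsf C$. Since every $b_i\ge0$, we get $2\mathsf C\le L$. By \eqref{eq:parameter-relations}, $L=2N+2e$, so $\mathsf C\le N+e$.

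\emph{Lower bound.} Since $\mathsf C=m+\mathsf A$ and $N-m=\lfloor (m-1)^2/4\rfloor$ by \eqref{eq:parameter-relations}, it suffices to show $\mathsf A\ge\lfloor (m-1)^2/4\rfloor$.

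At each vertex, $b_i\le m-a_i-c_i\le m-a_i$, so $d_i=2a_i+b_i\le m+a_i$. Fix $0\le k\le m$. The cut flow satisfies $F_k\ge0$, so \eqref{eq:cut-bound} gives
\[
2km-k(k+1)\le\sum_{i=1}^k d_i\le km+\sum_{i=1}^k a_i .
\]
Hence $\sum_{i\le k}a_i\ge k(m-1-k)$. Because all $a_i\ge0$, this yields $\mathsf A\ge k(m-1-k)$ for every such $k$.

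Now choose $k=\lfloor (m-1)/2\rfloor$, which lies in $[0,m]$.
\begin{itemize}
\item If $m-1=2s$, then $k(m-1-k)=s^2=(m-1)^2/4$.
\item If $m-1=2s+1$, then $k(m-1-k)=s(s+1)=\lfloor (m-1)^2/4\rfloor$.
\end{itemize}
This gives $\mathsf A\ge N-m$, that is, $\mathsf C\ge N$.

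Neither step is a serious obstacle. The only point needing care is the inequality $d_i\le m+a_i$, which uses $c_i\ge0$ to trade $b_i$ against $a_i$. The case $m=1$ is consistent: there $N=1$, $e=0$, and \cref{lem:finite-gamma} forces $\mathsf C=c_1=1$.
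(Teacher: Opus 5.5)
Your proof is correct and follows the paper's argument: the upper bound comes from $\mathsf B=L-2\mathsf C\geq0$ with $L=2N+2e$, and the lower bound from $a_i\geq d_i-m$ combined with the cut identity \eqref{eq:cut-bound} and $F_k\geq0$, which gives $\mathsf A\geq k(m-k-1)$ for every $0\leq k\leq m$. The only cosmetic difference is that you pick $k=\lfloor(m-1)/2\rfloor$ explicitly, whereas the paper maximizes $k(m-k-1)$ over integers by completing the square.
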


\begin{proof}
Since $d_i=a_i+(a_i+b_i)$ and $a_i+b_i\leq m$, one has
$a_i\geq d_i-m$.  For each $0\leq k\leq m$, it follows that
\begin{align*}
\mathsf A
&\geq\sum_{i=1}^k a_i
\geq\sum_{i=1}^k d_i-km\\
&=2km-k(k+1)+F_k-km
=k(m-k-1)+F_k
\geq k(m-k-1).
\end{align*}
Since $k(m-k-1)=\frac{(m-1)^2}{4}-\left(k-\frac{m-1}{2}\right)^2$, the maximum over integral $0\leq k\leq m$ is
$\lfloor(m-1)^2/4\rfloor=N-m$. We have $\mathsf C=m+\mathsf A\geq N$.  The upper bound follows
from $\mathsf B=L-2\mathsf C\geq0$ and $L/2=N+e$.
\end{proof}

By \cref{lem:finite-gamma,lem:range}, we may
group the expansion in \cref{prop:positive-ct} by
$j=\mathsf C-N$.  This proves \eqref{eq:gamma-main}, with
\begin{equation}\label{eq:gamma-flow-formula}
\gamma_{m,j}=\sum_{\substack{f_{ij},a_i,b_i,c_i\in\NN\ \text{satisfying}\ \eqref{eq:balance}\\
                  a_i+b_i+c_i=m\ (1\leq i\leq m)\\
                  \sum_i c_i=N+j}}
\left(\prod_{i<j}(f_{ij}+1)\right)\left(\prod_{i=1}^m\frac{m!}{a_i!b_i!c_i!}\right).
\end{equation}
Indeed, $L-2\mathsf C=2e-2j$.  Thus the gamma coefficients
are nonnegative integers.  We next show that every indexing set
in \eqref{eq:gamma-flow-formula} is nonempty.

\subsection{Positivity, exact degree, symmetry, and unimodality}

\begin{lemma}\label{lem:integer-intervals}
If $\ell_i,u_i$ are integers with $\ell_i\leq u_i$, $1\leq i\leq m$, then the
set of sums $\sum_{i=1}^m a_i$, with independently chosen integers $\ell_i\leq a_i\leq u_i$, is the entire integer interval
$[\sum_{i=1}^m \ell_i,\sum_{i=1}^m u_i]$.
\end{lemma}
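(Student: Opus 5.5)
We argue by induction on $m$, increasing one summand at a time from its lower bound. Each sum $\sum_i a_i$ with $\ell_i\le a_i\le u_i$ is clearly at least $\sum_i\ell_i$ and at most $\sum_i u_i$, so the content is that every intermediate integer is attained.

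For $m=1$ the set of sums is $\{a_1:\ell_1\le a_1\le u_1\}$, which is exactly the interval $[\ell_1,u_1]$. For the inductive step, suppose the sums of the first $m-1$ summands fill $[\sum_{i<m}\ell_i,\sum_{i<m}u_i]$. Adding an independent $a_m\in[\ell_m,u_m]$ produces the union of the translates
\[
\Bigl[\sum_{i<m}\ell_i+a_m,\ \sum_{i<m}u_i+a_m\Bigr],\qquad \ell_m\le a_m\le u_m.
\]
Consecutive translates (shift $a_m$ by $1$) overlap or abut, since each interval has length $\sum_{i<m}(u_i-\ell_i)\ge0$ and is moved by exactly one. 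Hence the union is the full integer interval from $\sum_i\ell_i$ to $\sum_i u_i$.

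A more explicit alternative avoids induction. Start from $(\ell_1,\dots,\ell_m)$ and repeatedly increase by one the first coordinate that is still below its upper bound. This walk reaches $(u_1,\dots,u_m)$ after $\sum_i(u_i-\ell_i)$ steps. Each step raises the sum by exactly $1$ while keeping every coordinate within its bounds, so every intermediate integer value is realized.

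No step is a genuine obstacle. The only point needing care is the degenerate case $\ell_i=u_i$, and both arguments handle it: it gives intervals of length zero, which still abut after unit shifts, or equivalently a coordinate the walk simply skips.
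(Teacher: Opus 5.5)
Your proposal is correct, and your second argument is essentially the paper's proof. The paper subtracts the lower endpoints and, for a target $s$ with $0\le s\le\sum_i(u_i-\ell_i)$, takes $\min(s,u_1-\ell_1)$ from the first interval, subtracts it from $s$, and continues greedily. The configuration your walk reaches after exactly $s$ steps is this greedy choice. So the two differ only in presentation: one monotone path through all targets, versus a separate construction for each target.

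Your inductive argument is a mildly different route. It shows that a sum of integer intervals is again an integer interval, because unit shifts of an interval of length $\sum_{i<m}(u_i-\ell_i)\ge 0$ overlap or abut. This gives a structural reason but does not exhibit a preimage, whereas the greedy version produces an explicit $(a_1,\dots,a_m)$ for each value. Either suffices for the paper's application, which only needs the existence of some admissible choice with a prescribed sum.
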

\begin{proof}
Subtract the lower endpoints.  For an integer $0\leq s\leq\sum_{i=1}^m(u_i-\ell_i)$, choose
$\min(s,u_1-\ell_1)$ from the first interval, subtract this choice from $s$, and continue.  A positive remainder after the
last choice would force the original $s$ to exceed $\sum_{i=1}^m(u_i-\ell_i)$, a contradiction.
\end{proof}

\begin{proposition}\label{prop:strict-gamma}
For every $0\leq j\leq e$, the sum
\eqref{eq:gamma-flow-formula} has a positive contribution with
$f_{ij}=0$ for every edge.  Thus $\gamma_{m,j}>0$.
\end{proposition}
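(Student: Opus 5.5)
The plan is to exhibit, for each $j$, an admissible array with the zero flow, by solving the vertex equations explicitly and then invoking \cref{lem:integer-intervals} to reach every value of $\mathsf C$.

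\emph{Step 1: the vertex constraints.} Set $f_{ij}=0$ for every edge. Then \eqref{eq:balance} reads $2a_i+b_i=2(m-i)$, so $b_i=2(m-i)-2a_i$. From $a_i+b_i+c_i=m$ we get
\[
c_i=m-a_i-b_i=a_i+2i-m .
\]
The three nonnegativity conditions are $a_i\geq0$, $a_i\leq m-i$ (from $b_i\geq0$) and $a_i\geq m-2i$ (from $c_i\geq0$). So every integer
\[
a_i\in[\ell_i,u_i],\qquad \ell_i=\max(0,m-2i),\quad u_i=m-i,
\]
gives an admissible triple at vertex $i$. This interval is nonempty because $m-2i\leq m-i$ and $0\leq m-i$ for $1\leq i\leq m$.

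\emph{Step 2: the attainable values of $\mathsf C$.} Since $\mathsf C=m+\mathsf A$ by \eqref{eq:gamma-sums}, \cref{lem:integer-intervals} shows that $\mathsf C$ takes every integer value in $[m+\sum_i\ell_i,\;m+\sum_iu_i]$. It remains to match the endpoints with $N$ and $N+e$.

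For the upper endpoint, $\sum_i u_i=m(m-1)/2$, so
\[
m+\sum_iu_i=\frac{m(m+1)}2=\frac L2=N+e .
\]
For the lower endpoint, $\sum_i\ell_i=\sum_{i<m/2}(m-2i)$, and we compare it with $N-m$ by parity, using the values of $N$ after \eqref{eq:parameter-relations}:
\begin{itemize}
\item if $m=2r$, then $\sum_i\ell_i=\sum_{i=1}^{r-1}(2r-2i)=r(r-1)=N-m$;
\item if $m=2r+1$, then $\sum_i\ell_i=\sum_{i=1}^{r}(2r+1-2i)=r^2=N-m$.
\end{itemize}
Hence every $\mathsf C=N+j$ with $0\leq j\leq e$ is attained by a zero-flow admissible array.

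\emph{Step 3: conclusion.} The weight of such an array is $\prod_{i}m!/(a_i!b_i!c_i!)\cdot\prod_{i<j}1$, a positive integer. All other terms in \eqref{eq:gamma-flow-formula} are nonnegative, so $\gamma_{m,j}>0$.

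The only step needing care is the endpoint bookkeeping in Step 2. In particular, the lower bound on $a_i$ is $\max(0,m-2i)$ rather than $m-2i$, and its sum must agree exactly with $\lfloor (m-1)^2/4\rfloor$ from \cref{lem:range}.
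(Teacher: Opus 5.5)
Your proof is correct and follows essentially the same route as the paper. Both arguments set every flow to zero, which gives the intervals $\max(0,m-2i)\leq a_i\leq m-i$. They then match the endpoint sums with $N-m$ (by parity) and with $m(m-1)/2$, and invoke \cref{lem:integer-intervals} to realize every $\mathsf C=N+j$ by an array of positive weight.
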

\begin{proof}
With all flows zero, \eqref{eq:balance} becomes
$2a_i+b_i=2(m-i)$.  Solving this equation together with
$a_i+b_i+c_i=m$ gives $b_i=2(m-i)-2a_i$ and $c_i=2i-m+a_i$.
The three variables are nonnegative precisely when
\begin{equation}\label{eq:zero-flow-intervals}
\max(0,m-2i)\leq a_i\leq m-i.
\end{equation}
These intervals are nonempty because $0\leq m-i$ and
$m-2i\leq m-i$.

Their lower endpoints sum to $N-m$.  Explicitly, for $m=2r$ the positive summands are $2r-2,2r-4,\ldots,2$, whose sum is
$r(r-1)=N-m$; for $m=2r+1$ they are $2r-1,2r-3,\ldots,1$, whose sum is $r^2=N-m$.
Empty sums are understood as zero. Their upper endpoints sum to
\[\sum_{i=1}^m(m-i)=\frac{m(m-1)}2.
\]
By \cref{lem:integer-intervals}, every integer $\mathsf A$ between these two endpoint sums is realized.
Since $\mathsf C=m+\mathsf A$, the realized values of
$\mathsf C$ form exactly
\[
\left[N,\ m+\frac{m(m-1)}2\right]
=[N,L/2]=[N,N+e].
\]
Choose $\mathsf C=N+j$.  The resulting array has edge
weight one and positive multinomial weight, and hence contributes
positively to \eqref{eq:gamma-flow-formula}.
\end{proof}

\begin{proposition}\label{prop:gamma-structure}\label{thm:gamma}
The gamma coefficients in \eqref{eq:gamma-flow-formula} are positive integers, and \eqref{eq:gamma-main} holds in
$\mathbb Z[y]$.  The quotient $A_m(y)=y^{-N}h_{m+2}^*(y)$
is a polynomial of degree $2e$ with positive palindromic
coefficients.  These coefficients increase strictly through
index $e$ when $e\geq1$.  The order at zero of $h_{m+2}^*$ is
$N$ and its degree is $N+2e$.
\end{proposition}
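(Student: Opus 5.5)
Most of the work is already done, so the plan is to assemble it. By \cref{prop:positive-ct}, $h_{m+2}^*(y)=T_m\bigl(\prod_i(z_i^2+(1+y)z_i+y)^m\bigr)$. Expanding with \eqref{eq:trinomial} and \eqref{eq:flow-denominator} gives a finite sum over admissible arrays (\cref{lem:finite-gamma}). By \eqref{eq:one-gamma-term}, each array contributes its positive weight times $y^{\mathsf C}(1+y)^{L-2\mathsf C}$. \cref{lem:range} confines $\mathsf C$ to $[N,N+e]$. Writing $\mathsf C=N+j$ and using $L-2N=2e$ gives
\[h_{m+2}^*(y)=y^N\sum_{j=0}^e\gamma_{m,j}y^j(1+y)^{2e-2j}\]
with $\gamma_{m,j}$ as in \eqref{eq:gamma-flow-formula}. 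These are nonnegative integers, and \cref{prop:strict-gamma} makes them positive. So \eqref{eq:gamma-main} holds in $\ZZ[y]$, and $A_m(y)=\sum_j\gamma_{m,j}y^j(1+y)^{2e-2j}$ is a polynomial.

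\textbf{Degree and palindromicity.} Each summand $y^j(1+y)^{2e-2j}$ is palindromic of degree $2e$, since $y^{2e}\cdot y^{-j}(1+1/y)^{2e-2j}=y^{j}(1+y)^{2e-2j}$. Hence $y^{2e}A_m(1/y)=A_m(y)$. The coefficient of $y^0$ is $\gamma_{m,0}>0$, and by symmetry so is that of $y^{2e}$. So $A_m$ has degree exactly $2e$ and $A_m(0)\neq0$. This gives order $N$ at zero and degree $N+2e$ for $h_{m+2}^*$.

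\textbf{Positivity and strict increase.} Positivity of all $a_{m,k}$ follows because every summand has nonnegative coefficients and the $j=0$ term $\gamma_{m,0}(1+y)^{2e}$ already makes every coefficient $0\le k\le 2e$ positive. For strict increase up to index $e$ (when $e\ge1$), I would show that each summand $y^j(1+y)^{2e-2j}$ has coefficient sequence that is weakly increasing for $k\le e$. Its coefficients are $\binom{2e-2j}{k-j}$, a symmetric unimodal binomial row centred at $k=e$, preceded by zeros. So the differences $a_{m,k}-a_{m,k-1}$ for $1\le k\le e$ are sums of nonnegative terms. The $j=0$ term contributes $\gamma_{m,0}\bigl(\binom{2e}{k}-\binom{2e}{k-1}\bigr)$, which is strictly positive for $1\le k\le e$ because $k\le e<(2e+1)/2$. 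Hence the increase is strict.

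No step is a real obstacle; the only care needed is the bookkeeping identity $L-2N=2e$ and using the $j=0$ term for strictness.
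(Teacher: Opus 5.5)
Your proposal is correct and follows essentially the same route as the paper. Both assemble the gamma expansion from the flow lemmas and \cref{prop:strict-gamma}, then read off positivity, palindromicity, exact order and degree, and strict increase through index $e$ from $a_{m,k}=\sum_{j}\gamma_{m,j}\binom{2e-2j}{k-j}$, using the $j=0$ summand $\gamma_{m,0}(1+y)^{2e}$ to get strictness.
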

\begin{proof}
The gamma expansion and its positivity have been proved.
Expanding the powers of $1+y$ gives, with
$\binom uv=0$ unless $0\leq v\leq u$,
\begin{equation}\label{eq:ordinary-from-gamma}
a_{m,k}=\sum_{j=0}^e\gamma_{m,j}
\binom{2e-2j}{k-j}\qquad(0\leq k\leq2e).
\end{equation}
The $j=0$ term is $\gamma_{m,0}\binom{2e}k>0$, proving
positivity.  The identity
$\binom{2e-2j}{2e-k-j}=\binom{2e-2j}{k-j}$, valid also
when the entries vanish, proves palindromicity.  Both endpoint
coefficients are $\gamma_{m,0}>0$, giving the asserted order
and degree.

Fix $0\leq k<e$. We have
$$a_{m,k+1}-a_{m,k}= \sum_{j=0}^e\gamma_{m,j} \left(\binom{2e-2j}{k+1-j}-\binom{2e-2j}{k-j}\right).$$
Terms with $j>k+1$ vanish at both indices
$k,k+1$, and the term with $j=k+1$ increases from zero to
$\gamma_{m,j}$.  For $j\leq k$, set $q=2e-2j$ and $r=k-j$.  Then $r<e-j=q/2$, and
\[\binom q{r+1}-\binom qr =\binom qr\left(\frac{q-r}{r+1}-1\right)\geq0.
\]
For $j=0$ this difference is positive, because $(2e-k)/(k+1)>1$ when $k<e$.  Summing proves $a_{m,k+1}>a_{m,k}$.  Symmetry gives the corresponding strict decrease after the middle coefficient.  Finally, for $m=1$ the
constant-term condition forces $a_1=b_1=0$ and $c_1=1$,
so $h_3^*(y)=y$.  This proves part~\textup{(i)} of \cref{thm:resolution} and the symmetry and strict
unimodality in part~\textup{(iii)}.
\end{proof}

\section{Real stable symmetric polynomial}
\label{st:section}

We construct a real stable symmetric eigenfunction of $\mathcal D_q$ with a prescribed leading monomial. 
The constant-term polynomial in the next section will be identified by this characterization.
Throughout this section $q\geq1$.  All polynomial limits are coefficientwise with a fixed degree bound, and all operator
exponentials act on finite-dimensional polynomial spaces.

\subsection{Stability and coefficient limits}

Set $\mathbb H=\{z\in\CC:\operatorname{Im}z>0\}$ and $\overline{\mathbb H}=\{z\in\CC:\operatorname{Im}z \geq 0\}$.
A nonzero polynomial $F\in\CC[x_1,\ldots,x_q]$ is \emph{stable} if
\[F(x_1,\ldots,x_q)\ne0 \quad\text{whenever }(x_1,\ldots,x_q)\in\mathbb H^q.\]
It is called \emph{real stable} if, in addition, its coefficients are real.
Nonzero constants are stable; zero is excluded.  A real univariate
polynomial is stable precisely when all its zeros are real, since
nonreal zeros occur in conjugate pairs.  For complex univariate
polynomials, we distinguish stability from the assertion that the
zeros themselves belong to $\mathbb H$.

\begin{lemma}\label{st:root-continuity}
In the coefficient space $\CC^r$ of monic degree-$r$ polynomials,
where $r\geq1$, the set $\mathcal C_r^+$ of polynomials whose zeros
belong to $\overline{\mathbb H}$ is closed.  Its interior consists
precisely of those whose zeros belong to $\mathbb H$.
\end{lemma}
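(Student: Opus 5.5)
The statement (\cref{st:root-continuity}) follows from continuity of the zeros of a monic polynomial as functions of its coefficients. I will use the standard fact in two forms. First, for any monic $p$ with zeros $\zeta_1,\ldots,\zeta_r$ and any $\varepsilon>0$, every monic polynomial whose coefficients are close enough to those of $p$ has all its zeros within distance $\varepsilon$ of $\{\zeta_1,\ldots,\zeta_r\}$. Second, the multisets of zeros converge. This can be derived from Rouché's theorem on small disks around the distinct zeros of $p$, together with a uniform bound on the moduli of the zeros (for instance Cauchy's bound $1+\max|c_k|$). An elementary alternative is the following. If $p_\nu\to p$ and $w_\nu$ is a zero of $p_\nu$, then the $w_\nu$ are bounded, and every accumulation point $w$ satisfies $p(w)=\lim p_\nu(w_\nu)=0$.

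\emph{Closedness.} I will take $p_\nu\in\mathcal C_r^+$ with $p_\nu\to p$ coefficientwise. Each $p_\nu$ factors as $\prod_{k}(x-w_{\nu,k})$ with $\operatorname{Im}w_{\nu,k}\geq0$. The zeros are uniformly bounded, so after passing to a subsequence $w_{\nu,k}\to w_k$ for each $k$. Then $p=\prod_k(x-w_k)$, since the coefficients are continuous functions of the zeros. Because $\overline{\mathbb H}$ is closed, each $w_k$ lies in $\overline{\mathbb H}$, so $p\in\mathcal C_r^+$. This argument avoids Rouché entirely.

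\emph{Interior, first inclusion.} Let $p$ have all zeros in $\mathbb H$, and let $\delta=\min_k\operatorname{Im}\zeta_k>0$. By the first form of continuity with $\varepsilon=\delta/2$, every monic $q$ in a neighbourhood of $p$ has all its zeros within $\delta/2$ of some $\zeta_k$. Hence these zeros have imaginary part at least $\delta/2>0$, so the neighbourhood lies in $\mathcal C_r^+$. To prove the first form by contradiction, I will take $q_\nu\to p$ together with zeros $w_\nu$ at distance at least $\varepsilon$ from every $\zeta_k$. The $w_\nu$ are bounded, so a subsequence converges to some $w$ with $p(w)=0$, yet $w$ is at distance at least $\varepsilon$ from every zero of $p$. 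This is a contradiction.

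\emph{Interior, second inclusion.} Conversely, suppose $p\in\mathcal C_r^+$ has a real zero $\zeta_1$. I will write $p=\prod_k(x-\zeta_k)$ and set $p_t=(x-\zeta_1+it)\prod_{k\geq2}(x-\zeta_k)$ for $t>0$. Then $p_t\to p$ as $t\to0^+$, and $p_t$ has the zero $\zeta_1-it$, which lies outside $\overline{\mathbb H}$. Thus every neighbourhood of $p$ meets the complement of $\mathcal C_r^+$, and $p$ is not an interior point. Combining the two inclusions identifies the interior exactly as stated. I do not expect a real obstacle anywhere. The only step needing care is the first form of root continuity used for the interior, and the compactness argument above handles it.
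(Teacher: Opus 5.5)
Your proof is correct and follows essentially the same route as the paper. Both use a Cauchy-type bound $1+C$ on the roots and compactness of the root tuples, where subsequential limits give $p=\prod_k(x-w_k)$; this yields closedness and keeps the roots of nearby polynomials close to those of $p$. Both then perturb a real root $\zeta_1$ to $\zeta_1-it$ to show that such a $p$ is not an interior point.

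The only difference is that the paper also records, inside this proof, the converse form of root continuity: every root of $p$ is approached by roots of all sufficiently close polynomials. The paper invokes this later in \cref{st:root-persistence}, but it is not needed for the statement itself.
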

\begin{proof}
We first establish the required continuity of roots.  Suppose that
monic degree-$r$ polynomials $p_j$ converge coefficientwise to $p$. Choose $C$ bounding their nonleading coefficients. 
Since for $|z|>1+C$, 
\[\sum_{k=0}^{r-1}C|z|^k=C\frac{|z|^r-1}{|z|-1}<|z|^r.
\]
In this case, the polynomial has no roots outside the region $|z|>1+C$.
Therefore, every zero has modulus at most $1+C$. 
Write the roots as $\rho_{j,1},\ldots,\rho_{j,r}$.  Every
subsequence of these $r$-tuples has a convergent further subsequence.
If its coordinate limits are $\rho_1,\ldots,\rho_r$, then
\[
p(z)=\lim_j\prod_{i=1}^r(z-\rho_{j,i})
     =\prod_{i=1}^r(z-\rho_i).
\]
Thus the limiting multiset is exactly the root multiset of $p$.

In particular, every neighborhood of a root of $p$ contains a root
of every sufficiently large $p_j$; and every open set containing
all roots of $p$ eventually contains all roots of $p_j$.  Otherwise
the same compactness argument would either factor $p$ using roots
outside a neighborhood of one of its roots, or produce a root of
$p$ outside an open set containing all its roots.
This proves continuity of the root multiset in the form used here.

Closedness of $\mathcal C_r^+$ follows by taking root limits: 
If every root of each $p_j$ belongs to the closed upper half-plane,
the preceding observation puts every root of $p$ there as well.
Thus $\mathcal C_r^+$ is closed.

If every root of $p$ belongs to $\mathbb H$, its finite root multiset has positive distance from the
real axis.  Root continuity gives a coefficient neighborhood of $p$
in which all roots remain in $\mathbb H$; hence $p$ is an interior
point of $\mathcal C_r^+$.  Conversely, if $p$ has a real root $a$,
write $p(z)=(z-a)u(z)$.  Replacing that factor by
$z-a+\mathrm i\delta$, with $\delta>0$, moves one root into the
lower half-plane and changes the coefficients by an arbitrarily
small amount.  Such a $p$ cannot be an interior point.
\end{proof}

\begin{lemma}\label{st:root-persistence}
Let $R\in\NN$, and let polynomials $p_j(z)\in\CC[z]$ of degrees at most $R$ converge coefficientwise to a nonzero polynomial $p(z)$.
If $p(\alpha)=0$, every neighborhood of $\alpha$ contains a zero of every sufficiently large $p_j(z)$.
\end{lemma}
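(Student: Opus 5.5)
The plan is to reduce to the monic case treated in \cref{st:root-continuity}, taking care that the degrees of the $p_j$ may be smaller than, equal to, or larger than the degree of $p$, since only the bound $R$ is fixed. Let $s=\deg p\le R$ and let $c\neq0$ be the coefficient of $z^s$ in $p$. If $s=0$, then $p$ is a nonzero constant and has no zeros, so the statement is vacuous; assume $s\geq1$. Coefficientwise convergence gives $[z^s]p_j\to c$, so for large $j$ this coefficient is nonzero and $\deg p_j\geq s$.

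The key step is to split $p_j$ into a part carrying the roots near the roots of $p$ and a part whose roots escape to infinity. Write $p_j=q_j+z^{s+1}r_j$ with $\deg q_j\le s$; then $q_j\to p$ and $r_j\to0$ coefficientwise. Fix a neighborhood, which we may take to be a small disc $U$ around $\alpha$ with radius $\varepsilon$ and with no zero of $p$ on $\partial U$. First I would try the cleanest route, Rouché's theorem on the circle $\partial U$: there $|p|\ge\mu>0$ by compactness, while $\sup_{\partial U}|p_j-p|\to0$, because the circle is bounded and only finitely many coefficients (at most $R+1$) converge. Hence for large $j$ we have $|p_j-p|<|p|$ on $\partial U$, so $p_j$ and $p$ have the same number of zeros in $U$, counted with multiplicity. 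That number is at least one, since $p(\alpha)=0$.

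If one prefers to stay within the paper's elementary framework, an alternative is to argue through \cref{st:root-continuity}: divide $q_j$ by its $z^s$ coefficient to get monic degree-$s$ polynomials converging to $p/c$, whose root multisets converge. One must then control the extra terms $z^{s+1}r_j$, and this perturbation step is exactly where the argument becomes awkward, because the degree jumps. Instead I would just present the Rouché argument, noting that it also handles higher-degree $p_j$ directly, since the disc is bounded. If complex analysis is to be avoided, one can use the argument principle via a winding-number count, but I would simply cite Rouché.

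The main obstacle, mild as it is, is the uniform convergence on $\partial U$ when $\deg p_j$ exceeds $\deg p$. This is resolved by the degree bound $R$: it makes $p_j-p$ a finite sum of coefficient differences times bounded monomials.
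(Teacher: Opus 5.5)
Your Rouché argument is correct, but it takes a different route from the paper. You choose a small circle $\partial U$ around $\alpha$ that avoids the finitely many zeros of $p$. The degree bound $R$ gives $\sup_{\partial U}|p_j-p|\to0$, and Rouché then shows that $p_j$ and $p$ have the same number of zeros in $U$.

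The paper stays within its elementary framework and reduces to \cref{st:root-continuity} instead. It picks $w$ with $p(w)\neq0$ and passes to $Q_j(z)=z^Rp_j(w+1/z)$. The $z^R$ coefficient of $Q_j$ is $p_j(w)\to p(w)\neq0$, so the normalized polynomials $Q_j/p_j(w)$ are eventually monic of exact degree $R$ and converge to $Q/p(w)$.

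This inversion is exactly the device that removes the ``degree jump'' you found awkward in your alternative route:
\begin{itemize}
\item zeros of $p_j$ escaping to infinity become zeros of $Q_j$ tending to $0$;
\item a drop in $\deg p_j$ shows up as zeros of $Q_j$ at $0$.
\end{itemize}
Root continuity then supplies zeros $\beta_j$ of $Q_j$ tending to $\beta=1/(\alpha-w)\neq0$. The points $w+1/\beta_j$ are then zeros of $p_j$ tending to $\alpha$.

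The paper's proof uses only the factorization into linear factors and a compactness argument. Yours is shorter once Rouché (the argument principle) is granted, and it proves more: the Hurwitz-type statement that the number of zeros in $U$, with multiplicity, is eventually preserved.

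Your splitting $p_j=q_j+z^{s+1}r_j$ and the remark $\deg p_j\geq s$ play no role in the Rouché argument and can be dropped.
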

\begin{proof}
The case $R=0$ is vacuous.  Choose $w$ with $p(w)\ne0$, and set 
\begin{equation}\label{st:reciprocal-polynomial}
Q_j(z)=z^R p_j(w+1/z),\qquad Q(z)=z^R p(w+1/z).
\end{equation}
These are polynomials: a term $a_k(w+1/z)^k$ contributes $a_kz^{R-k}(1+wz)^k$.  Their coefficients of $z^R$ are $p_j(w)$ and $p(w)$, respectively.  Consequently $Q_j(z)/p_j(w)$ eventually has degree $R$ and converges to the monic degree-$R$ polynomial $Q(z)/p(w)$, even if the degrees of $p_j$ vary.

The number $\beta=1/(\alpha-w)\ne0$ is a root of $Q(z)$.
By \cref{st:root-continuity}, roots $\beta_j$ of $Q_j(z)$
can be chosen with $\beta_j\to\beta$.  They are eventually
nonzero, and \eqref{st:reciprocal-polynomial} then gives
$p_j(w+1/\beta_j)=0$ with $w+1/\beta_j\to\alpha$.
\end{proof}

\begin{lemma}\label{st:limit-closure}
Let $F_j$ be stable polynomials in a fixed number of variables, of
uniformly bounded total degree.  If they converge coefficientwise to
$F$, then $F$ is either stable or identically zero.
\end{lemma}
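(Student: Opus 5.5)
This is a multivariate Hurwitz theorem, and I would derive it from \cref{st:root-persistence} by restricting to complex lines. Suppose $F\ne0$ and, for contradiction, $F(\zeta)=0$ at some $\zeta=(\zeta_1,\ldots,\zeta_q)\in\mathbb H^q$. The aim is to produce, for large $j$, a zero of $F_j$ inside $\mathbb H^q$. That contradicts the stability of $F_j$.

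Since $F\ne0$, I will first choose a direction in which the restriction of $F$ is not identically zero. Take $v\in\RR_{>0}^q$ and consider $p(s)=F(\zeta+sv)$ for $s\in\CC$. This is a univariate polynomial. For generic $v$, the top homogeneous component $F_{\mathrm{top}}$ of degree $\deg F$ satisfies $F_{\mathrm{top}}(v)\ne0$, because a nonzero polynomial cannot vanish on the open set $\RR_{>0}^q$. Then $p$ has leading coefficient $F_{\mathrm{top}}(v)\ne0$, so $p\ne0$, and $p(0)=F(\zeta)=0$.

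Next, set $p_j(s)=F_j(\zeta+sv)$. The $F_j$ have total degree at most some $R$, so the $p_j$ have degree at most $R$. Their coefficients are fixed polynomial expressions in the coefficients of $F_j$, with $\zeta$ and $v$ fixed. Hence $p_j\to p$ coefficientwise.

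Now apply \cref{st:root-persistence} with $\alpha=0$. The open set $U=\{s:\zeta+sv\in\mathbb H^q\}$ contains $0$, since $\operatorname{Im}\zeta_i>0$ for every $i$ and the condition is open in $s$. It also contains a small disc around $0$. So for all large $j$ there is $s_j\in U$ with $p_j(s_j)=0$, which means $F_j(\zeta+s_jv)=0$ with $\zeta+s_jv\in\mathbb H^q$. This contradicts the stability of $F_j$. Therefore $F$ is stable, or $F$ is identically zero.

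The only delicate step is ensuring the restricted limit polynomial $p$ is nonzero, since \cref{st:root-persistence} requires a nonzero limit. The choice of a real positive direction $v$ with $F_{\mathrm{top}}(v)\ne0$ handles this. Any direction would work, because the disc around $s=0$ stays inside $\mathbb H^q$ for small $|s|$ whatever the direction.
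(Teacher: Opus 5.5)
Your proof is correct and follows the paper's argument: restrict to a complex line through the zero, apply \cref{st:root-persistence} at the origin of the line parameter, and use a small disc that the line maps into $\mathbb H^q$. The only difference is how you guarantee the restriction is nonzero: the paper runs the line through a second point $b\in\mathbb H^q$ with $F(b)\ne0$, whereas you choose a direction $v$ with $F_{\mathrm{top}}(v)\ne0$ so the leading coefficient survives, which is an equally valid device.
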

\begin{proof}
Suppose that the limit $F$ is nonzero but $F(a)=0$ for some
$a\in\mathbb H^q$.  There is $b\in\mathbb H^q$ with $F(b)\ne0$:
a polynomial vanishing on the product of open sets
$\mathbb H^q$ is zero, by successive applications of the
fact that a nonzero univariate polynomial has finitely many roots
to its coefficient polynomials.
For the given stable polynomials $F_j$, set
\[
g_j(t)=F_j\bigl(a+t(b-a)\bigr),\qquad
g(t)=F\bigl(a+t(b-a)\bigr).
\]
Then $g_j\to g$ coefficientwise with bounded degree, while
$g(0)=0$ and $g(1)\ne0$.  Choose $\eta>0$ such that
\[
\eta|b_i-a_i|<\operatorname{Im}a_i
\quad\text{whenever }b_i\ne a_i.
\]
For $|t|<\eta$, the inequality
\[
\operatorname{Im}\bigl(a_i+t(b_i-a_i)\bigr)
\geq\operatorname{Im}a_i-|t|\,|b_i-a_i|>0
\]
holds in every nonconstant coordinate.  The constant coordinates
remain in $\mathbb H$ as well.
Hence stability makes every $g_j(t)$ zero-free in this disk,
contrary to \cref{st:root-persistence} at the root $0$ of $g(t)$.
\end{proof}

\subsection{Elementary symmetric polynomials and dominance}

We begin by introducing \emph{elementary symmetric polynomials} and related definitions.
For $0\leq k\leq q$, let
\[
e_k(x_1,\ldots,x_q)=
\sum_{1\leq i_1<\cdots<i_k\leq q}x_{i_1}\cdots x_{i_k},
\qquad e_0=1.
\]
Write $\CC[x_1,\ldots,x_q]^{\mathfrak S_q}$ for the ring of
polynomials invariant under permutations of the variables, and set
\[
\mathcal S_{q,\leq R}
=\{F\in\CC[x_1,\ldots,x_q]^{\mathfrak S_q}:\deg F\leq R\}.
\]
Its real form consists of the polynomials with real coefficients.

A partition of length at most $q$ is written
$\lambda=(\lambda_1,\ldots,\lambda_q)$ with
$\lambda_1\geq\cdots\geq\lambda_q\geq0$, including trailing zeros.
Its size and diagram are
\[
|\lambda|=\sum_i\lambda_i,
\qquad
[\lambda]=\{(i,j):1\leq i\leq q,\ 1\leq j\leq\lambda_i\}.
\]
The conjugate partition has parts
$\lambda'_j=\#\{i:\lambda_i\geq j\}$.
For partitions of equal size, the dominance relation
$\mu\preceq\lambda$ means
\begin{equation}\label{st:dominance}
\sum_{i=1}^k\mu_i\leq\sum_{i=1}^k\lambda_i
\qquad(1\leq k\leq q).
\end{equation}
Write $\mu\prec\lambda$ if also $\mu\ne\lambda$.
The \emph{monomial symmetric polynomial} $m_\lambda$ is the sum of the
distinct monomials whose exponent vectors are permutations of
$\lambda$.  These polynomials form a basis in each degree, since symmetry makes coefficients constant on each monomial orbit.
Define
\begin{equation}\label{st:elementary-product}
E_\lambda=\prod_{j=1}^{\lambda_1}e_{\lambda'_j},
\qquad
\mathcal V_\lambda
=\operatorname{span}_{\CC}
 \{m_\mu:\mu\preceq\lambda\}.
\end{equation}
The partitions $\mu\preceq\lambda$ form the dominance downset of
$\lambda$.  The space $\mathcal V_\lambda$ is homogeneous of
degree $|\lambda|$; for the zero partition the empty product
$E_\lambda$ is $1$.

\begin{lemma}\label{st:elementary-unitriangular}
The polynomial $E_\lambda$ is real stable, and
\begin{equation}\label{st:elementary-triangular}
E_\lambda=m_\lambda+
 \sum_{\mu\prec\lambda}a_{\lambda\mu}m_\mu,
\qquad a_{\lambda\mu}\in\NN.
\end{equation}
The polynomials $E_\mu$ with $\mu\preceq\lambda$ therefore form
a basis of $\mathcal V_\lambda$.
\end{lemma}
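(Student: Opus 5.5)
We have to prove three things: real stability of $E_\lambda$, the unitriangular expansion \eqref{st:elementary-triangular} with nonnegative integer coefficients, and the basis statement.

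\emph{Real stability.} Each $e_k$ with $0\leq k\leq q$ is real stable. For $q$ variables, $\prod_{i=1}^q(1+tx_i)=\sum_k e_k t^k$ is a product of real stable linear factors in $(t,x_1,\ldots,x_q)$, hence real stable. Extracting the coefficient of $t^k$ preserves stability: it is a derivative followed by the specialization $t=0$, and by Hurwitz-type arguments such as \cref{st:limit-closure} each $e_k$ is either stable or zero. Since $e_k\neq0$ for $k\leq q$, it is stable. If we want to avoid the partial-derivative preservation theorem, there is a direct argument. For $x\in\mathbb H^q$, $e_k(x)$ equals $\prod_i x_i\cdot e_{q-k}(1/x)$, and $1/x_i$ lies in the lower half-plane. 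So it suffices that $e_j$ is nonvanishing on an open half-plane product. This is the classical fact that $e_j(x)\ne0$ when all $x_i$ lie in an open convex cone avoiding $0$: every monomial $x_{i_1}\cdots x_{i_j}$ has argument in $(0,j\pi)$. That alone is not enough, so the cleanest route is the Grace--Walsh--Szeg\H{o} coincidence theorem. Since $e_k$ is multiaffine and symmetric, $e_k(x)=\binom qk\zeta^k$ for some $\zeta$ in any circular region (here $\mathbb H$) containing the $x_i$, and $\zeta^k\neq0$ because $0\notin\mathbb H$. Real coefficients are clear. Finally, a product of stable polynomials is stable, because the nonvanishing sets intersect. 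Hence $E_\lambda$ is real stable.

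\emph{Triangularity.} Expand $E_\lambda=\prod_j e_{\lambda'_j}$ as a sum over choices of subsets $S_j\subseteq\{1,\ldots,q\}$ with $|S_j|=\lambda'_j$. This is equivalently a $0/1$ matrix with column sums $\lambda'$. The monomial has exponent vector equal to the row sums. Coefficients in the monomial basis are therefore counts of such matrices, so they lie in $\NN$. By the Gale--Ryser theorem (easy direction), the row-sum vector $\alpha$, once sorted, is dominated by $\lambda=(\lambda')'$. The proof is direct: the top $k$ rows contain at most $\sum_j\min(k,\lambda'_j)=\sum_{i\leq k}\lambda_i$ ones. Equality $\alpha=\lambda$ sorted decreasingly forces, inductively row by row, the unique matrix whose $i$th row has ones in the first $\lambda_i$ columns. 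So the coefficient of $m_\lambda$ is $1$.

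\emph{Basis.} The partitions $\mu\preceq\lambda$ form a finite downset. Order them by a linear extension of dominance. Each $E_\mu$ with $\mu\preceq\lambda$ lies in $\mathcal V_\mu\subseteq\mathcal V_\lambda$ by transitivity, and its transition matrix to $\{m_\mu\}$ is unitriangular. It is therefore invertible, and these $E_\mu$ span $\mathcal V_\lambda$ and form a basis of it. The main obstacle is the stability of $e_k$. I would first try the Grace--Walsh--Szeg\H{o} argument, and fall back on coefficient extraction from $\prod(1+tx_i)$ via derivative preservation.
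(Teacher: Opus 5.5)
Your proof is correct. Triangularity and the basis statement follow the paper. The stability of $e_k$ is done differently.

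\textbf{What matches the paper.} The terms of $E_\lambda$ are indexed by subsets $S_j$ of sizes $\lambda'_j$; your $0/1$ matrices with column sums $\lambda'$ are the same objects. The bound $\sum_j\min(k,\lambda'_j)=\sum_{i\le k}\lambda_i$ gives dominance. Equality at $x^\lambda$ forces the Ferrers configuration: you force it row by row, the paper column by column via equality at $k=\lambda'_j$. Unitriangularity on the finite downset then gives the basis.

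\textbf{Stability: your route.} Your main argument, via Grace--Walsh--Szeg\H{o}, is valid. $e_k$ is symmetric and multiaffine, and $\mathbb H$ is an open convex circular region, so $e_k(x)=\binom qk\zeta^k$ for some $\zeta\in\mathbb H$, which is nonzero. One correction: for $k<q$ the theorem needs the region to be convex, so ``any circular region'' is too strong. $\mathbb H$ does qualify.

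\textbf{Stability: the paper's route.} The paper gives a self-contained argument. Fix $x\in\mathbb H^q$; then $p(u)=\prod_i(u+x_i)$ has all its zeros in the open lower half-plane. Hence $p'/p=\sum_j(u-\rho_j)^{-1}$ has negative imaginary part on the closed upper half-plane, so every derivative of $p$ is zero-free there. In particular this holds at the real point $u=0$, where $\partial_u^{q-k}p(0)=(q-k)!\,e_k(x)$.

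\textbf{Comparison.} The paper's argument is the univariate core of your fallback. It needs neither Grace--Walsh--Szeg\H{o} nor a Hurwitz limit, because $u=0$ already lies in the closed region where zero-freeness is proved. Your route gives a one-line proof once a substantial theorem is cited, plus the extra information $e_k(x)=\binom qk\zeta^k$ with $\zeta\in\mathbb H$.

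\textbf{Fix the side arguments.} Your fallback starts from a false claim: $1+tx_i$ is not stable, since $t=x_i=\mathrm i$ gives $1+tx_i=0$. So $\prod_i(1+tx_i)$ is not a product of stable factors. Use $t+x_i$, as the paper does with $u+x_i$, or $1-tx_i$; with that change the derivative-plus-Hurwitz argument goes through. Also, the reduction $e_k(x)=\prod_i x_i\,e_{q-k}(1/x)$ is circular. Nonvanishing of $e_{q-k}$ on a product of open lower half-planes is, by conjugation, the same stability statement you are trying to prove, so drop it.
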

\begin{proof}
Fix $x_i\in\mathbb H$.  The roots of $\prod_i(u+x_i)$ lie
strictly below the real axis.  If $p(u)$ is any nonconstant polynomial with roots $\rho_j$ in that half-plane, then
$p'(u)/p(u)=\sum_j1/(u-\rho_j)$ has negative imaginary part for $\operatorname{Im}u\geq0$.  Thus $p'(u)$ has no root in
the closed upper half-plane.  Repeated differentiation shows that
\[
\left.\frac{\partial^{q-k}}{\partial u^{q-k}}
\prod_{i=1}^q(u+x_i)\right|_{u=0}
=(q-k)!e_k(x)
\]
is nonzero.  This includes $k=0$, when the derivative is constant.
Hence each $e_k$, and therefore $E_\lambda$, is real stable.

A term of $E_\lambda$ is specified by subsets
$S_j\subseteq\{1,\ldots,q\}$ of sizes $\lambda'_j$; the
exponent of $x_i$ counts the sets containing $i$.  For any set
$I$ of $k$ indices, the sum of its exponents is bounded by
\[
\sum_j\min(k,\lambda'_j)=\sum_{i=1}^k\lambda_i.
\]
Choosing the indices with the $k$ largest exponents proves
dominance by $\lambda$.  The coefficients are nonnegative integers
because they count the subset choices.

For the monomial $x_1^{\lambda_1}\cdots x_q^{\lambda_q}$,
equality holds in the bound for every initial set
$\{1,\ldots,k\}$.  Each subset must attain its individual bound.
For $h=\lambda'_j$, equality at $k=h$ forces
$S_j=\{1,\ldots,h\}$.  These unique choices produce the required
monomial, so its coefficient is one.  Symmetry proves
\eqref{st:elementary-triangular}.

The resulting triangular matrix has diagonal entries one and is
invertible on each finite dominance downset.  This proves the basis
assertion.  It also proves that every symmetric polynomial has a
unique expression in $e_1,\ldots,e_q$: in each degree the products
$E_\mu$ are precisely the monomials $e_1^{b_1}\cdots e_q^{b_q}$,
where the conjugate partition of $\mu$ has $b_k$ parts equal to $k$.
\end{proof}

\subsection{A univariate coefficient multiplier}

For a linear map $T$ on a finite-dimensional polynomial space, write
\begin{equation}\label{st:matrix-exponential}
 e^{tT}=\sum_{j=0}^{\infty}\frac{t^jT^j}{j!}.
\end{equation}
With any induced operator norm, the series converges absolutely and
satisfies $\|e^{tT}\|\leq e^{|t|\|T\|}$.  Multiplication of the
series gives
\[
e^{sT}e^{tT}
=\sum_{k=0}^{\infty}T^k\sum_{j=0}^k
       \frac{s^j t^{k-j}}{j!(k-j)!}
=\sum_{k=0}^{\infty}\frac{(s+t)^kT^k}{k!}
=e^{(s+t)T}.
\]
Hence $(e^{tT})^{-1}=e^{-tT}$.  Termwise differentiation is valid
on bounded intervals and gives
$\frac{\mathrm d}{\mathrm dt}e^{tT}=Te^{tT}$.
For commuting operators, the same calculation and the binomial
formula give $e^{S+T}=e^Se^T$.

On $\CC[z]_{\leq R}$ set
\[
\mathcal E=z\frac{\mathrm d}{\mathrm dz},
\qquad
\mathcal A_1=z^2\frac{\mathrm d^2}{\mathrm dz^2}.
\]
For $t\geq0$, define
\begin{equation}\label{st:multiplier-definition}
M_t(z^k)=e^{-t k(k-1)}z^k\qquad(k\geq0).
\end{equation}
Since $\mathcal A_1z^k=k(k-1)z^k$, this is
$M_t=\exp(-t\mathcal A_1)$.  Its action is independent of the
chosen degree bound $R$.

\begin{lemma}\label{st:first-order}
Let $f(z)\in\CC[z]$ be nonzero of degree $r$, with all zeros in
$\mathbb H$.  If $r>0$ and $a>-1/r$ is real, then $f(z)+azf'(z)$
has degree $r$ and all zeros in $\mathbb H$.  For $r=0$ the
conclusion holds for every real $a$.
\end{lemma}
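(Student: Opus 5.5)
The plan is to work with the logarithmic derivative and argue at the level of roots. Write $g(z)=f(z)+azf'(z)$. If $f(z)=c\prod_{k=1}^r(z-\rho_k)$ with $c\neq0$ and every $\rho_k\in\mathbb H$, then the leading coefficient of $g$ is $c(1+ar)$. This is nonzero precisely because $a>-1/r$, so $g$ has degree $r$. For $r=0$, $f$ is a nonzero constant and $g=f$; the conclusion holds vacuously for every real $a$, since there are no zeros.

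For $r>0$, I would argue by contradiction. Suppose $g(w)=0$ for some $w$ with $\operatorname{Im}w\leq0$. Then $f(w)\neq0$, because all zeros of $f$ lie in $\mathbb H$. Dividing by $f(w)$ gives
\[
1+a\sum_{k=1}^r\frac{w}{w-\rho_k}=0 .
\]
The case $a=0$ is immediate, since then $g=f$. For $a\neq0$ the displayed equation forces $w\neq0$, and I rewrite it as
\[
\sum_{k=1}^r\frac{\rho_k}{w-\rho_k}=-r-\frac1a ,
\]
using $\frac{w}{w-\rho_k}=1+\frac{\rho_k}{w-\rho_k}$. Equivalently, put $\zeta_k=w/(w-\rho_k)$, so the condition reads $\sum_k\zeta_k=-1/a$.

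The key step is to locate each $\zeta_k$ when $\operatorname{Im}w\leq0<\operatorname{Im}\rho_k$. The map $w\mapsto w/(w-\rho)$ is a Möbius transformation. It sends $0\mapsto0$, $\infty\mapsto1$ and $\rho\mapsto\infty$, and it sends the real line to the circle through $0$ and $1$ that also passes through $\bar\rho/(\bar\rho-\rho)$. I will instead use the explicit real part, which seems cleanest:
\[
\operatorname{Re}\frac{w}{w-\rho}=\frac{|w|^2-\operatorname{Re}(w\bar\rho)}{|w-\rho|^2}.
\]
The plan is to compare this with $1/2$, or more precisely with the threshold needed to make $\sum\zeta_k=-1/a$ impossible. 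The target is that $\operatorname{Re}\zeta_k$ has the wrong sign or size for a real sum equal to $-1/a$ with $a>-1/r$, that is, $-1/a<r$ or $-1/a>0$ according to the sign of $a$.

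I expect this step to be the main obstacle, because the naive real-part bound may not separate the cases. If it fails, I would fall back on a homotopy argument. Consider $g_s=f+sazf'$ for $s\in[0,1]$. Every $g_s$ has degree exactly $r$, since $1+sar\neq0$. By root continuity (\cref{st:root-continuity}), a zero can leave $\mathbb H$ only by crossing the real axis. So it suffices to rule out real zeros $x$ of $g_s$. At a real $x\neq0$, the quantity $\operatorname{Im}\bigl(f'(x)/f(x)\bigr)=\sum_k\operatorname{Im}\rho_k/|x-\rho_k|^2$ is strictly positive. Hence $x f'(x)/f(x)$ is nonreal and cannot equal the real number $-1/(sa)$. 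At $x=0$, we have $g_s(0)=f(0)\neq0$. This fallback argument is clean, so I would actually present it as the main proof. The only step needing care is the degree preservation along the homotopy, which is exactly where $a>-1/r$ enters.
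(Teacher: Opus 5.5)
Your homotopy argument is correct and is essentially the paper's proof: $1+sar>0$ preserves the degree, $\operatorname{Im}\bigl(f'(x)/f(x)\bigr)>0$ at real $x$ rules out real zeros of $f+sazf'$ (with $x=0$ and $sa=0$ handled separately), and root continuity does the rest. You can drop the exploratory M\"obius/real-part discussion; for the crossing step, normalize to monic polynomials and show that the set of $s\in[0,1]$ with all roots in $\mathbb H$ is open and closed, as the paper does.
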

\begin{proof}
For $r>0$, let $\rho_1,\ldots,\rho_r$ be the roots, counted
with multiplicities.  For real $x$,
\begin{equation}\label{st:logarithmic-derivative}
\operatorname{Im}\frac{f'(x)}{f(x)}
=\operatorname{Im}\sum_{j=1}^r\frac1{x-\rho_j}
=\sum_{j=1}^r
\frac{\operatorname{Im}\rho_j}{|x-\rho_j|^2}>0.
\end{equation}
Thus $f(x)+axf'(x)=0$ is impossible when $ax\ne0$, since it
would make $f'(x)/f(x)=-1/(ax)$ real.  When $ax=0$, the value is
$f(x)\ne0$.

The same argument applies along the path $f(z)+\tau azf'(z)$, $0\leq\tau\leq1$.  Its leading coefficient is multiplied by
$1+\tau ar\geq\min(1,1+ar)>0$, so monic normalization gives
a continuous path of degree-$r$ polynomials with no real roots.
Let $J$ be the set of parameters $\tau$ for which all roots lie in
$\mathbb H$.  It contains $0$ and is relatively open by root
continuity.  It is relatively closed because a limiting root
belongs to $\overline{\mathbb H}$ and cannot be real.
Connectedness gives $J=[0,1]$. This proves the assertion.
\end{proof}

\begin{proposition}\label{st:multiplier-preservation}
For every real $t\geq0$, the operator $M_t$ has the following properties.
\begin{enumerate}
\item[(i)] It preserves the property that all univariate zeros belong to $\mathbb H$, and also the property that they belong to $\overline{\mathbb H}$.
\item[(ii)] Both assertions hold for the lower half-plane.
\item[(iii)] Applied in any one variable of a multivariate polynomial, it preserves stability and real stability.
\end{enumerate}
\end{proposition}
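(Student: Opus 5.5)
The plan is to write $M_t=\exp(-t\mathcal A_1)$ as a limit of products of the first-order operators from \cref{st:first-order}, and then pass to the limit with the closure results \cref{st:root-continuity} and \cref{st:limit-closure}.

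\emph{Factoring $\mathcal A_1$.} Since $\mathcal A_1=\mathcal E(\mathcal E-1)=\mathcal E^2-\mathcal E$, I would rather use a Trotter-type or Euler product. For small $s>0$ the operator $(1+s\mathcal E)^{-1}$ is not of the form covered by \cref{st:first-order}, so I would use the forward factors $1+a\mathcal E$, $f\mapsto f+azf'$, which act diagonally as $z^k\mapsto(1+ak)z^k$. The target multiplier is $e^{-tk(k-1)}$. Every operator in sight is diagonal in the monomial basis, so they all commute. I would write
\[
e^{-tk(k-1)}=\lim_{n\to\infty}\left(1-\frac{t k(k-1)}{n}\right)^{n}
\]
and factor $1-\frac tn k(k-1)$ into linear factors in $k$. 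The two roots of $1-\frac tn(k^2-k)=0$ are real, so
\[
1-\frac tn(k^2-k)=\Bigl(1-\frac k{\alpha_n}\Bigr)\Bigl(1-\frac k{\beta_n}\Bigr),
\]
with $\alpha_n>1$ large positive and $\beta_n<0$ of large modulus. Each factor has the form $1+a k$. For $a=-1/\alpha_n$ we need $a>-1/r$, which holds once $\alpha_n>R$ for degree $r\le R$. The factor $a=-1/\beta_n>0$ is always allowed. Hence, up to a positive normalization, each $n$-th approximant is a composition of $2n$ operators $f\mapsto f+azf'$. By \cref{st:first-order} each of these preserves degree and the property that all zeros lie in $\mathbb H$.

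\emph{Passing to the limit for (i).} The approximants converge to $M_t$ on $\CC[z]_{\le R}$. Degree is preserved, and the leading coefficient is multiplied by $e^{-tr(r-1)}\neq0$, so after monic normalization \cref{st:root-continuity} shows that the limit has all zeros in $\overline{\mathbb H}$. To get $\mathbb H$ I would avoid a strictness argument by a semigroup trick: the same product argument shows directly that each approximant maps the open class into itself. For the open-to-open claim I would use \eqref{st:logarithmic-derivative}-style reasoning to show that no real zero can appear along the continuous path $\tau\mapsto M_\tau f$, $0\le\tau\le t$. For the closed class, I would approximate $f$ by $f(z-\mathrm i\delta)$ and use closedness. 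Since $M_t$ has real multipliers, conjugating coefficients gives \emph{(ii)}: $\overline{M_tf(\bar z)}=M_t\overline{f(\bar z)}$ swaps the half-planes.

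\emph{Part (iii).} Fix the other variables at points of $\mathbb H$. A polynomial $F$ is stable iff for all such choices the univariate polynomial in $x_1$ has no zeros in $\mathbb H$, that is, all its zeros lie in the closed lower half-plane (or it is a nonzero constant). The operator $M_t$ in $x_1$ commutes with this specialization. So (ii) gives nonvanishing whenever the specialized polynomial in $x_1$ has degree equal to the generic $x_1$-degree of $F$. The main obstacle is degenerate specializations in which the specialization has lower degree, or vanishes after applying $M_t$. I would handle these with \cref{st:limit-closure}: perturb $F$ to $F_\varepsilon=F+\varepsilon$-type stable approximants, or note that the leading coefficient in $x_1$ of a stable $F$ is itself stable, hence nonzero on $\mathbb H^{q-1}$. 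The second fact would itself be shown by the limit $x_1^{-d}F$ as $x_1=\mathrm i s$, $s\to\infty$, together with \cref{st:limit-closure}. Real coefficients are preserved because the multipliers are real, which gives real stability.
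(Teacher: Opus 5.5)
\textbf{Where the proposal falls short.} Your treatment of the closed condition follows the paper. Your factorization $1-\frac tn(k^2-k)=(1-k/\alpha_n)(1-k/\beta_n)$ is exactly the paper's $I-\varepsilon\mathcal A_1=(I-\mathcal E/c)(I+\mathcal E/b)$, with $\varepsilon=t/n$, $c=\alpha_n$, $b=-\beta_n$. The Euler limit, root continuity and the shift $f(z-\mathrm i\delta)$ also match the paper, as do part (ii) by conjugation and part (iii) by specialization.

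The gap is the strict half of (i): $M_t$ maps polynomials with all zeros in the open half-plane $\mathbb H$ to polynomials with the same property. The limit of the approximants only lands in the closed class. The fact that each approximant preserves the open class does not survive the limit, and the ``semigroup trick'' you mention is never spelled out.

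Your fallback, a logarithmic-derivative argument that no real zero appears along $\tau\mapsto M_\tau f$, does not transfer from \cref{st:first-order}. There the path $f+\tau azf'$ is affine in $\tau$, so \eqref{st:logarithmic-derivative} for the fixed $f$ excludes a real zero at every $\tau$ at once. For $g_\tau=M_\tau f$ there is no such structure. At a first time $\tau_0$ where $g_{\tau_0}$ has a real zero, you only know that all zeros of $g_{\tau_0}$ lie in $\overline{\mathbb H}$, which is no contradiction. Producing one would need an analysis of the zero dynamics of $\partial_\tau g=-z^2g''$ near a possibly multiple zero whose velocity may be real.

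So the open assertion is unproved as written, and it matters later. \cref{st:derivation-preserves} needs the roots $y_i$ of $e^{q(q-1)t}M_tp_x$ to lie in $\mathbb H$ itself, so that $F(y_1,\ldots,y_q)\neq0$.

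\textbf{How to close it.} Once the closed class is preserved, use invertibility, as the paper does. On the space $\CC^r$ of monic degree-$r$ coefficients, $\Phi_t=e^{tr(r-1)}M_t$ multiplies the coefficient of $z^k$ by $e^{t[r(r-1)-k(k-1)]}\neq0$. Hence $\Phi_t$ is a linear homeomorphism of $\CC^r$, and it maps $\mathcal C_r^+$ into itself. It therefore sends each open set inside $\mathcal C_r^+$ to an open set inside $\mathcal C_r^+$, so it preserves the interior. By \cref{st:root-continuity}, that interior is exactly the open-root condition.

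\textbf{A side remark on (iii).} The ``degenerate specializations'' are not an obstacle. Part (ii) applies to a nonzero univariate polynomial of any degree. $M_t$ is diagonal with nonzero multipliers, hence injective and degree-preserving, so a specialization of lower degree, or one killed by $M_t$, causes no trouble. Your detour through the stability of the leading coefficient in $x_1$ is correct but unnecessary.
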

\begin{proof}
The assertions are immediate for $t=0$ and for nonzero constants.
For $\varepsilon>0$, set
\[
c=\frac{1+\sqrt{1+4/\varepsilon}}2,
\qquad b=c-1.
\]
Then $b,c>0$, $c-b=1$, and $bc=1/\varepsilon$.  Since
$\mathcal E^2-\mathcal E=\mathcal A_1$, we have
\begin{equation}\label{st:euler-factorization}
I-\varepsilon\mathcal A_1
=\left(I-\frac{\mathcal E}{c}\right)
 \left(I+\frac{\mathcal E}{b}\right).
\end{equation}
Indeed, the coefficients of $\mathcal E$ and $\mathcal E^2$
on the right are $1/b-1/c=\varepsilon$ and
$-1/(bc)=-\varepsilon$.

Fix a degree bound $R$ and a nonconstant polynomial $f$ of degree
$r\leq R$.  For sufficiently small $\varepsilon$,
$c>R$, so \cref{st:first-order} applies to both factors:
$a=-1/c>-1/r$ and $a=1/b>0$ for every positive degree $r\leq R$.
Consequently, if all roots of $f$ belong to $\mathbb H$, the same
is true of
\[
\left(I-\frac{t}{v}\mathcal A_1\right)^v f
\]
for all sufficiently large integers $v$.
Its multiplier on $z^k$ is
$\bigl(1-tk(k-1)/v\bigr)^v\to e^{-tk(k-1)}$ since $\lim_{n\rightarrow \infty} \left(1+\frac{x}{n}\right)^n=e^{x}$.
The limit is $M_tf$, of the same degree $r$ as $f$, with leading
coefficient multiplied by $e^{-tr(r-1)}\ne0$.
\cref{st:root-continuity} therefore puts its roots in
$\overline{\mathbb H}$.

If the roots of $f$ are only known to belong to
$\overline{\mathbb H}$, apply this conclusion to
$f(z-\mathrm i\delta)$ and let $0<\delta\rightarrow 0$.
Thus $M_t$ preserves the closed-root condition.
To recover the strict condition, consider its monic normalization
on degree-$r$ coefficient space:
\begin{align*}
\Phi_t\left(z^r+\sum_{k=0}^{r-1}a_kz^k\right)
=e^{tr(r-1)}M_t\left(z^r+\sum_{k=0}^{r-1}a_kz^k\right)=z^r+\sum_{k=0}^{r-1}e^{t[r(r-1)-k(k-1)]}a_kz^k.
\end{align*}
This is a homeomorphism, since all coefficient multipliers are
nonzero, and it maps $\mathcal C_r^+$ into itself.  The image of
an open neighborhood contained in $\mathcal C_r^+$ is again an
open subset of $\mathcal C_r^+$.  Hence $\Phi_t$ preserves its
interior, which is exactly the strict upper-root condition by \cref{st:root-continuity}.  This proves \textup{(i)}.
Conjugation commutes with $M_t$, proving \textup{(ii)}.

For \textup{(iii)}, specialize all other variables to arbitrary
points of $\mathbb H$.  Stability gives a nonzero univariate
polynomial whose roots lie in the closed lower half-plane.
Its image under $M_t$ retains that property and remains nonzero,
since its degree is preserved and its leading coefficient remains
nonzero.
It is therefore zero-free in $\mathbb H$.  Since the specialization
was arbitrary, the multivariate image is stable. 
Real coefficients are preserved by \eqref{st:multiplier-definition}.
\end{proof}

\subsection{A derivation on symmetric polynomials}

On $\CC[x_1,\ldots,x_q]^{\mathfrak S_q}$, write
$\partial_i=\partial/\partial x_i$ and define
\begin{equation}\label{st:operators}
\mathcal A_q=\sum_{i=1}^q x_i^2\partial_i^2,\qquad
\mathcal B_q=2\sum_{1\leq i<j\leq q}\frac{x_i^2\partial_i-x_j^2\partial_j}{x_i-x_j},\qquad
\mathcal D_q=\mathcal B_q-\mathcal A_q.
\end{equation}
The operator $\mathcal D_q$ is twice the Laplace--Beltrami operator
at parameter $-1$ in Stanley's normalization
\cite[p.~84, (11)]{StanleyJack}.  We prove the properties of this
specialization needed here directly.

For symmetric $F$, the numerator
$x_i^2\partial_iF-x_j^2\partial_jF$ vanishes at $x_i=x_j$:
differentiating symmetry gives equality of the two partial
derivatives on that diagonal.  The numerator is therefore divisible
by $x_i-x_j$.  Thus $\mathcal B_qF$ is a polynomial, and
permuting variables merely permutes its unordered pair terms.
Both $\mathcal A_q$ and $\mathcal B_q$ preserve symmetry,
homogeneous degree, and real coefficients.  In particular, all
three operators preserve $\mathcal S_{q,\leq R}$.
The ordinary product rule gives
\begin{equation}\label{st:derivation-rule}
\mathcal B_q(FG)
=(\mathcal B_qF)G+F(\mathcal B_qG)
\end{equation}
for symmetric $F,G$: first as a rational-function identity, then
as a polynomial identity by divisibility.  Hence $\mathcal B_q$
is a derivation of the symmetric polynomial ring.

\begin{lemma}\label{st:elementary-eigenvalues}
For $0\leq k\leq q$, we have 
\begin{equation}\label{st:elementary-eigenvalue-formula}
\mathcal B_q e_k=k(2q-k-1)e_k.
\end{equation}
\end{lemma}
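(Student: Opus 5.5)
We compute $\mathcal B_q e_k$ directly from the definition \eqref{st:operators}, pair by pair. The strategy is to split $e_k$ according to which of $x_i$ and $x_j$ occur in a given monomial.

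For a fixed pair $i<j$, write
\[
e_k=P+(x_i+x_j)Q+x_ix_jR,
\]
where $P,Q,R$ are elementary symmetric polynomials in the remaining $q-2$ variables, of degrees $k$, $k-1$ and $k-2$ respectively. Any of these is interpreted as zero when its degree is negative or exceeds $q-2$. Since $P,Q,R$ do not involve $x_i$ or $x_j$, we get
\[
x_i^2\partial_ie_k-x_j^2\partial_je_k=(x_i^2-x_j^2)Q+(x_i^2x_j-x_j^2x_i)R.
\]
Dividing by $x_i-x_j$, the pair contributes
\[
2\bigl[(x_i+x_j)Q+x_ix_jR\bigr]
\]
to $\mathcal B_qe_k$.

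Next we sum over all pairs and count multiplicities of each squarefree monomial $x_S$ with $|S|=k$; every term appearing is such a monomial.
\begin{itemize}
\item[$\bullet$] The term $(x_i+x_j)Q$ produces $x_S$ exactly when exactly one of $i,j$ lies in $S$. There are $k(q-k)$ such pairs.
\item[$\bullet$] The term $x_ix_jR$ produces $x_S$ exactly when both $i,j$ lie in $S$. There are $\binom k2$ such pairs.
\end{itemize}
Each occurrence has coefficient $2$, so the coefficient of $x_S$ in $\mathcal B_qe_k$ is
\[
2k(q-k)+k(k-1)=k(2q-k-1).
\]
This proves \eqref{st:elementary-eigenvalue-formula}.

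The edge cases $k=0$ and $k=1$ are covered by the same count. For $k=0$ the constant is annihilated, matching the eigenvalue $0$. For $k=1$ only the $Q$-terms survive and give $2(q-1)$, matching $1\cdot(2q-2)$.

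The only point needing care is the bookkeeping of which pairs produce a given monomial. As a check, the eigenvalue $k(2q-k-1)$ is symmetric under $k\mapsto 2q-1-k$, which is consistent with the count.
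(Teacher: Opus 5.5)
Your proposal is correct and follows the paper's proof essentially verbatim. Both split $e_k=e_k^{(i,j)}+(x_i+x_j)e_{k-1}^{(i,j)}+x_ix_je_{k-2}^{(i,j)}$, evaluate the pair operator to $2\bigl[(x_i+x_j)Q+x_ix_jR\bigr]$, and count $k(q-k)$ and $\binom k2$ pairs to obtain $k(2q-k-1)$. Your closing remark about the symmetry $k\mapsto 2q-1-k$ does not actually verify the count, but nothing in the argument depends on it.
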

\begin{proof}
For $q=1$ both sides are zero.  For $q\geq2$ and $i<j$, let
$e_r^{(i,j)}$ be the elementary symmetric polynomial in the other
$q-2$ variables, with value zero outside $0\leq r\leq q-2$.
Then
\[
e_k=e_k^{(i,j)}+(x_i+x_j)e_{k-1}^{(i,j)}
                       +x_ix_je_{k-2}^{(i,j)}.
\]
The pair operator $2\frac{x_i^2\partial_i-x_j^2\partial_j}{x_i-x_j}$ vanishes on $e_k^{(i,j)}$, while
\begin{align*}
2\frac{(x_i^2\partial_i-x_j^2\partial_j)(x_i+x_j)}{x_i-x_j}
 &=2\frac{x_i^2-x_j^2}{x_i-x_j}=2(x_i+x_j),\\
2\frac{(x_i^2\partial_i-x_j^2\partial_j)(x_ix_j)}{x_i-x_j}
 &=2\frac{x_i^2x_j-x_j^2x_i}{x_i-x_j}=2x_ix_j.
\end{align*}
Consequently the pair contributes a multiplier $2$ to each
squarefree monomial whose support meets $\{i,j\}$.
A fixed $k$-element support meets $\binom{k}{2}$ pairs in two
elements and $k(q-k)$ pairs in one.  Its total multiplier is
\[
2\left\{\binom{k}{2}+k(q-k)\right\}=k(2q-k-1).
\]
This includes $k=0$ and proves the formula.
\end{proof}

By \cref{st:elementary-unitriangular}, write each symmetric
polynomial uniquely as $F=G(e_1,\ldots,e_q)$.  The derivation rule
and \eqref{st:elementary-eigenvalue-formula} give
\[
\mathcal B_q(e_1^{b_1}\cdots e_q^{b_q})
=\left(\sum_{k=1}^q b_k k(2q-k-1)\right)
 e_1^{b_1}\cdots e_q^{b_q}.
\]
The exponential therefore multiplies this elementary monomial by
\[\exp\left(t\sum_{k=1}^q b_k k(2q-k-1)\right)
=\prod_{k=1}^q\left(e^{tk(2q-k-1)}\right)^{b_k}.
\]
By linearity, we have
\begin{equation}\label{st:derivation-exponential}
\bigl(e^{t\mathcal B_q}F\bigr)(x)=G\left(e^{t(2q-2)}e_1(x),e^{2t(2q-3)}e_2(x),\ldots,e^{qt(q-1)}e_q(x)\right).
\end{equation}
Here the indexed expression gives the $q$ arguments of $G$.
The formula holds on every $\mathcal S_{q,\leq R}$ containing $F$.

\begin{proposition}\label{st:derivation-preserves}
For $t\geq0$ and $R\in\NN$, the operator $e^{t\mathcal B_q}$
on $\mathcal S_{q,\leq R}$ preserves stability and real stability.
\end{proposition}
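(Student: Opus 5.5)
The plan is to use the explicit formula \eqref{st:derivation-exponential}. It shows that $e^{t\mathcal B_q}F$ is just $F$ evaluated at a new point $x'$, obtained from $x$ by a root-moving operation on an auxiliary univariate polynomial. Stability then transfers pointwise once we show that $x\in\mathbb H^q$ forces $x'\in\mathbb H^q$. Write $F=G(e_1,\ldots,e_q)$ as in \cref{st:elementary-unitriangular}, and put $\lambda_k=e^{tk(2q-k-1)}$. The key algebraic observation is the splitting
\[
k(2q-k-1)=2(q-1)k-k(k-1),\qquad\text{so}\qquad \lambda_k=\kappa^k e^{-tk(k-1)},\quad \kappa=e^{2t(q-1)}>0 .
\]

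Fix $x\in\mathbb H^q$ and consider the generating polynomial
\[
P_x(w)=\prod_{i=1}^q(1+x_iw)=\sum_{k=0}^q e_k(x)w^k .
\]
It has degree exactly $q$, because $e_q(x)=\prod_i x_i\ne0$. Its roots are $-1/x_i$, and these lie in $\mathbb H$ since $x\mapsto-1/x$ maps $\mathbb H$ to itself. Apply $M_t$ from \eqref{st:multiplier-definition} in the variable $w$ and then rescale $w\mapsto\kappa w$. This gives
\[
Q_x(w)=(M_tP_x)(\kappa w)=\sum_{k=0}^q\lambda_ke_k(x)w^k .
\]
By \cref{st:multiplier-preservation}(i), $M_tP_x$ has all its roots in $\mathbb H$ and still has degree $q$. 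Dividing those roots by $\kappa>0$ keeps them in $\mathbb H$. The constant term of $Q_x$ is $1$, so $0$ is not a root, and we may write $Q_x(w)=\prod_{i=1}^q(1+x'_iw)$ with $x'_i=-1/\rho'_i$, where $\rho'_i$ are the roots of $Q_x$. Each $x'_i$ lies in $\mathbb H$. Comparing coefficients gives $e_k(x')=\lambda_ke_k(x)$ for all $k$. Hence, by \eqref{st:derivation-exponential},
\[
(e^{t\mathcal B_q}F)(x)=G(\lambda_1e_1(x),\ldots,\lambda_qe_q(x))=G(e_1(x'),\ldots,e_q(x'))=F(x').
\]
Stability of $F$ then gives $F(x')\ne0$. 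Since $x\in\mathbb H^q$ was arbitrary, $e^{t\mathcal B_q}F$ is zero-free on $\mathbb H^q$.

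It remains to check a few formalities. First, $e^{t\mathcal B_q}F\ne0$, because $e^{t\mathcal B_q}$ is invertible with inverse $e^{-t\mathcal B_q}$. Second, the operator preserves real coefficients, because the $\lambda_k$ are real and $G$ has real coefficients when $F$ does, by the uniqueness in \cref{st:elementary-unitriangular}. Third, the image stays in $\mathcal S_{q,\leq R}$, because $\mathcal B_q$ preserves degree.

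The main point needing care is the root-moving step. We need the degree of $M_tP_x$ to remain exactly $q$, so that none of the $q$ roots escapes to infinity and $x'$ is a genuine $q$-tuple. This holds because the leading coefficient is multiplied by $e^{-tq(q-1)}\ne0$, exactly as in the proof of \cref{st:multiplier-preservation}. We also need the strict open half-plane version of that proposition, and not only the closed one, so that each $x'_i$ lands in $\mathbb H$ rather than possibly on the real axis.
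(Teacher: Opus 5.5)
Your proof is correct and is essentially the paper's argument: both apply $M_t$ to a univariate polynomial whose coefficients are the $e_k(x)$, use part (i) of \cref{st:multiplier-preservation} to keep the roots in $\mathbb H$, and then read off $(e^{t\mathcal B_q}F)(x)=F(x')$ from \eqref{st:derivation-exponential}. The only difference is cosmetic. The paper works with $\prod_{i}(u-x_i)$ directly and normalizes by $e^{q(q-1)t}$; since $q(q-1)-(q-k)(q-k-1)=k(2q-k-1)$, this already gives the required multipliers, so it needs neither your inversion $x\mapsto-1/x$ nor the dilation by $\kappa$.
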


\begin{proof}
For $x=(x_1,\ldots,x_q)\in\mathbb H^q$, form
\[
p_x(u)=\prod_{i=1}^q(u-x_i)
=\sum_{k=0}^q(-1)^k e_k(x)u^{q-k}.
\]
Set
\begin{equation}\label{st:root-lift}
p_y(u)=e^{q(q-1)t}M_t p_x(u).
\end{equation}
The leading coefficient is
$e^{q(q-1)t}e^{-q(q-1)t}=1$.
By \cref{st:multiplier-preservation}, all roots
$y_1,\ldots,y_q$ of $p_y$ lie in $\mathbb H$.
Comparison of the coefficients of $u^{q-k}$ gives
\begin{align}\label{st:root-lift-coefficients}
e_k(y)=e^{[q(q-1)-(q-k)(q-k-1)]t}e_k(x)=e^{k(2q-k-1)t}e_k(x).
\end{align}
Together with \eqref{st:derivation-exponential}, this implies
\[
\bigl(e^{t\mathcal B_q}F\bigr)(x)
=G(e_1(y),\ldots,e_q(y))=F(y_1,\ldots,y_q).
\]
The right side is nonzero for stable $F$, and symmetry makes
its value independent of the ordering of the roots $y_i$.
Thus $e^{t\mathcal B_q}F$ is stable.  The operator also preserves
real coefficients.
\end{proof}

\begin{theorem}\label{st:evolution-preserves}
For $t\geq0$ and $R\in\NN$, the operator $e^{t\mathcal D_q}$
on $\mathcal S_{q,\leq R}$ preserves stability and real stability.
\end{theorem}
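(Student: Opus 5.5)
We plan to combine the two preservation results already available, using a Trotter product formula. We have $\mathcal D_q=\mathcal B_q-\mathcal A_q$, where $\mathcal A_q=\sum_i x_i^2\partial_i^2$ is a sum of commuting one-variable operators. Hence
\[
e^{-t\mathcal A_q}=\prod_{i=1}^q M_t^{(i)},
\]
where $M_t^{(i)}=\exp(-t\,x_i^2\partial_i^2)$ is the multiplier $M_t$ acting in the variable $x_i$. This factorization uses the rule $e^{S+T}=e^Se^T$ for commuting operators recorded before \eqref{st:multiplier-definition}. By \cref{st:multiplier-preservation}(iii), each factor preserves stability and real stability, so $e^{-t\mathcal A_q}$ preserves both properties on all polynomials of bounded degree. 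The operator $e^{-t\mathcal A_q}$ also preserves symmetry and degree, since $\mathcal A_q$ does. Therefore it maps the finite-dimensional space $\mathcal S_{q,\leq R}$ into itself. By \cref{st:derivation-preserves}, $e^{s\mathcal B_q}$ with $s\geq0$ preserves stability and real stability on $\mathcal S_{q,\leq R}$.

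The key step is the Lie--Trotter formula on the finite-dimensional space $\mathcal S_{q,\leq R}$:
\[
e^{t\mathcal D_q}=\lim_{v\to\infty}\bigl(e^{(t/v)\mathcal B_q}e^{-(t/v)\mathcal A_q}\bigr)^v
\]
in operator norm. We would prove this directly with the standard estimate. Set $X=e^{(t/v)\mathcal B_q}e^{-(t/v)\mathcal A_q}$ and $Y=e^{(t/v)\mathcal D_q}$. Expanding both exponential series gives $\|X-Y\|=O(v^{-2})$. The telescoping identity $X^v-Y^v=\sum_k X^k(X-Y)Y^{v-1-k}$ then applies. Both $\|X\|$ and $\|Y\|$ are bounded by $e^{(t/v)(\|\mathcal A_q\|+\|\mathcal B_q\|)}$, so every product $X^k$ and $Y^{v-1-k}$ is uniformly bounded by $e^{t(\|\mathcal A_q\|+\|\mathcal B_q\|)}$. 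The total error is therefore $O(1/v)$. This is routine, because all operators act on a fixed finite-dimensional space and all norms there are equivalent.

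Now let $F\in\mathcal S_{q,\leq R}$ be stable. Each iterate $X^vF$ is stable, since it is obtained by alternately applying the two preserving operators, and its degree is at most $R$. The iterates converge coefficientwise to $e^{t\mathcal D_q}F$. By \cref{st:limit-closure}, the limit is either stable or identically zero. The limit cannot be zero, because $e^{t\mathcal D_q}$ is invertible with inverse $e^{-t\mathcal D_q}$ and $F\neq0$. Real coefficients are preserved at every stage and under limits. This proves the theorem.

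The main obstacle I expect is bookkeeping rather than any new idea. The point needing care is that $e^{-t\mathcal A_q}$ is defined on all polynomials but must be viewed as an operator on $\mathcal S_{q,\leq R}$, so that the Trotter limit is taken in a single finite-dimensional space. The nonvanishing of the limit must also be argued separately, since \cref{st:limit-closure} alone allows a zero limit; the invertibility argument above is intended to settle this.
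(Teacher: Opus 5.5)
Your proposal is correct and follows the paper's proof essentially step for step. You factor $e^{-t\mathcal A_q}$ into the coordinate multipliers $M_t$ and combine it with the preservation result for $e^{t\mathcal B_q}$ through a directly proved finite-dimensional Trotter formula, using the same $O(h^2)$ local error and telescoping estimate. You then conclude, as the paper does, with the limit-closure lemma and rule out a zero limit by the invertibility of $e^{t\mathcal D_q}$.
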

\begin{proof}
The operators $x_i^2\partial_i^2$ commute, so
$e^{-t\mathcal A_q}$ is the product of the coordinate operators
$M_t$.  Each preserves stability by \cref{st:multiplier-preservation}.  Their product
preserves symmetry because permutations merely permute the
commuting factors. \cref{st:derivation-preserves}
gives both properties for $e^{t\mathcal B_q}$.

Note that \(\mathcal{A}_q\) and \(\mathcal{B}_q\) do not commute in general (\(\mathcal{A}_q\mathcal{B}_q \neq \mathcal{B}_q\mathcal{A}_q\)); hence the exponential cannot be split directly, i.e.,
\(e^{t(\mathcal{B}_q-\mathcal{A}_q)} \neq e^{t\mathcal{B}_q}e^{-t\mathcal{A}_q}\).

On the finite-dimensional space $\mathcal S_{q,\leq R}$, the
product formula is
\begin{equation}\label{st:trotter}
e^{t(\mathcal B_q-\mathcal A_q)}
=\lim_{v\longrightarrow\infty}
\left(e^{(t/v)\mathcal B_q}e^{-(t/v)\mathcal A_q}\right)^v.
\end{equation}
This is the finite-dimensional case of Trotter's formula
\cite{Trotter}; the following estimate supplies a direct proof.
For $t>0$, set $h=t/v$ and
\[
U_h=e^{h\mathcal B_q}e^{-h\mathcal A_q},\qquad
V_h=e^{h(\mathcal B_q-\mathcal A_q)}.
\]
For a fixed matrix $T$ and $0\leq h\leq1$, we have 
\begin{equation}\label{st:exponential-remainder}
\|e^{hT}-I-hT\|
\leq\sum_{j=2}^{\infty}\frac{h^j\|T\|^j}{j!}
\leq\frac{h^2\|T\|^2}{2}e^{h\|T\|}.
\end{equation}
The last inequality uses $j!\geq2(j-2)!$.
Expanding the two factors of $U_h$ and the exponential $V_h$
shows that both have linear part
$I+h(\mathcal B_q-\mathcal A_q)$, with remainders bounded by
a constant times $h^2$.  Thus $\|U_h-V_h\|\leq Ch^2$.
For $C_1=\|\mathcal B_q\|+\|\mathcal A_q\|$ we also have
\[
\|U_h\|\leq e^{h\|\mathcal B_q\|}e^{h\|\mathcal A_q\|}
=e^{C_1h},\qquad
\|V_h\|\leq e^{h\|\mathcal B_q-\mathcal A_q\|}
\leq e^{C_1h}.
\]
The telescoping identity
\[
U_h^v-V_h^v
=\sum_{j=0}^{v-1}U_h^{v-1-j}(U_h-V_h)V_h^j
\]
requires no commutativity and gives
\[
\|U_h^v-V_h^v\|
\leq v C h^2 e^{C_1h(v-1)}
\leq \frac{Ct^2e^{C_1t}}{v}\longrightarrow0.
\]
Since $V_h^v=e^{t\mathcal D_q}$, this proves
\eqref{st:trotter}; the case $t=0$ is immediate.

Every approximating product preserves stability and symmetry.
Its coefficientwise limit is stable or zero by \cref{st:limit-closure}.  It is nonzero on a nonzero input
because $e^{t\mathcal D_q}$ is invertible, with inverse
$e^{-t\mathcal D_q}$.  All operators preserve real coefficients.
\end{proof}

\subsection{The stable eigenfunction}

\begin{lemma}\label{st:operator-triangular}
For every partition $\nu$ of length at most $q$, we have 
\begin{equation}\label{st:triangular-operator}
\mathcal D_q m_\nu=\varepsilon_\nu m_\nu+\sum_{\mu\prec\nu}c_{\nu\mu}m_\mu,
\end{equation}
where $c_{\nu\mu}\in\RR$ and
\begin{align}
\varepsilon_\nu&=\sum_{i=1}^q\nu_i\bigl(2(q-i)-\nu_i+1\bigr)\label{st:eigenvalue}\\
&=2\sum_{(i,j)\in[\nu]}(q+1-i-j).\label{st:eigenvalue-diagram}
\end{align}
In particular, $\mathcal V_\lambda$ is invariant under $\mathcal D_q$.
\end{lemma}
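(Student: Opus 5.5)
Write $\mathcal D_q=\mathcal B_q-\mathcal A_q$ and compute each operator on a monomial $x^\alpha$, where $\alpha$ is a permutation of $\nu$. Then read off the coefficients of monomial symmetric functions. The plan has three steps.

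\textbf{Step 1: the operator $\mathcal A_q$.} This is diagonal:
\[
\mathcal A_q x^\alpha=\sum_i \alpha_i(\alpha_i-1)\,x^\alpha .
\]
Hence $\mathcal A_q m_\nu=\sum_i\nu_i(\nu_i-1)\,m_\nu$.

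\textbf{Step 2: a single pair term of $\mathcal B_q$.} Fix $i<j$ and write $x_i^{a}x_j^{b}$ for the part of the monomial in these two variables, with the other variables treated as constants. The pair term gives
\[
2\,\frac{a\,x_i^{a+1}x_j^{b}-b\,x_i^{a}x_j^{b+1}}{x_i-x_j}.
\]
Because $\mathcal B_q$ only preserves polynomiality on symmetric inputs, I would symmetrize over the pair, that is, pair $x_i^ax_j^b$ with $x_i^bx_j^a$, which also occurs in $m_\nu$. Then use the division identity
\[
\frac{x_i^{p}x_j^{r}-x_i^{r}x_j^{p}}{x_i-x_j}=\sum_{s=r}^{p-1}x_i^{s}x_j^{p+r-1-s}\qquad(p>r).
\]
Take $a\ge b$. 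The symmetrized pair contribution to $x_i^ax_j^b+x_i^bx_j^a$ is
\[
2a\,\frac{x_i^{a+1}x_j^b-x_i^bx_j^{a+1}}{x_i-x_j}-2b\,\frac{x_i^ax_j^{b+1}-x_i^{b+1}x_j^a}{x_i-x_j}.
\]
Every monomial produced has exponent pair $(s,a+b-s)$ with $b\le s\le a$. Such a pair is a "Robin Hood" move toward balance, so by the usual transfer argument the resulting exponent vector, once sorted, is dominated by $\nu$. The only terms with exponent pair exactly $(a,b)$ or $(b,a)$ are the endpoints. Collecting them, the coefficient of $x_i^ax_j^b$ is $2a-2b+2b=2a$ when $a>b$, and the partner gets $2b$. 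So the pair multiplies the two monomials together by $2(\max+\min)$ in total, but with the weight assigned to the larger exponent sitting in position $i<j$. The case $a=b$ is handled directly: $x_i^ax_j^a$ gives $2a(x_i+x_j)x_i^ax_j^a/(x_i+x_j)$-type terms, and I would compute this case by hand.

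\textbf{Step 3: assembling the diagonal coefficient.} The cleanest route is to compute the coefficient of the leading monomial $x^\nu=x_1^{\nu_1}\cdots x_q^{\nu_q}$ in $\mathcal D_q m_\nu$, since triangularity already gives the shape of the expansion. In the pair $i<j$ of $x^\nu$ we have $\nu_i\ge\nu_j$. Collecting the contributions that return to $x^\nu$ (from $x^\nu$ itself and from its $(i,j)$-transposition), each pair should contribute $2\nu_i$. Summing over $j>i$ gives $2\nu_i(q-i)$. Subtracting the $\mathcal A_q$ part gives
\[
\varepsilon_\nu=\sum_i\nu_i\bigl(2(q-i)-\nu_i+1\bigr),
\]
which is \eqref{st:eigenvalue}. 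Its diagram form \eqref{st:eigenvalue-diagram} follows by summing $2(q+1-i-j)$ over $1\le j\le\nu_i$ in row $i$:
\[
2\nu_i(q+1-i)-\nu_i(\nu_i+1)=\nu_i\bigl(2(q-i)-\nu_i+1\bigr).
\]
Invariance of $\mathcal V_\lambda$ then follows from transitivity of dominance, since every $m_\mu$ appearing in $\mathcal D_q m_\nu$ has $\mu\preceq\nu\preceq\lambda$.

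The main obstacle is the exact bookkeeping of the diagonal coefficient in Step 3. This includes the equal-exponent pairs, and it requires care that contributions from other permutations $\alpha$ of $\nu$ landing on $x^\nu$ are counted exactly once. As a check and fallback, I would verify the formula on $E_\nu$ using the derivation property and \cref{st:elementary-eigenvalues}. For $\mathcal A_q$, the leading term of $E_\nu$ is $m_\nu$ by \cref{st:elementary-unitriangular}, so the diagonal coefficients agree. This is not fully independent, but it confirms the $\mathcal B_q$ part: $\sum_k b_kk(2q-k-1)$ must equal $\sum_i 2\nu_i(q-i)-\sum_i\nu_i(\nu_i-1)$ plus the $\mathcal A$ correction, which is a column-sum identity on the diagram.
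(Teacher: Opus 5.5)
Your direct monomial computation is a valid route, genuinely different from the paper's, but one claim in Step 2 is false and should be corrected. For $a>b$ the symmetrized pair contribution is
\[
2a\sum_{s=b}^{a}x_i^sx_j^{a+b-s}-2b\sum_{s=b+1}^{a-1}x_i^sx_j^{a+b-s},
\]
so \emph{both} endpoint monomials $x_i^ax_j^b$ and $x_i^bx_j^a$ receive coefficient $2a=2\max(a,b)$, and the interior ones receive $2(a-b)$. The partner does not get $2b$, and the total is not $2(\max+\min)$. This is forced because the pair operator $2(x_i^2\partial_i-x_j^2\partial_j)/(x_i-x_j)$ is unchanged under $i\leftrightarrow j$. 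Already $x_i+x_j\mapsto2(x_i+x_j)$ refutes the $2b$ rule.

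The slip is harmless here only because in $x^\nu$ the larger exponent of every pair sits in the earlier slot, so Step 3 uses just the correct value $2\nu_i$. With your stated rule, reading the diagonal off another monomial of the orbit would give a different answer. The equal case is simply $2a\,x_i^ax_j^a$, again $2\nu_i$. The bookkeeping worry you flag is settled by one observation. The $(i,j)$ term fixes every exponent outside $\{i,j\}$ and the sum of the two exponents inside. Hence only the orbit $\{x^\nu,x^{\tau_{ij}\nu}\}$ can reach $x^\nu$, and only through its endpoint terms. Your Robin Hood step becomes precise by writing the new exponent vector as $t\alpha+(1-t)\tau_{ij}\alpha$ with $0\leq t\leq1$, and using that the sum of the $k$ largest entries is a convex, permutation-invariant function.

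The paper never applies $\mathcal B_q$ to monomials. It uses that $\mathcal B_q$ is a derivation with $\mathcal B_qe_k=k(2q-k-1)e_k$. Hence each $E_\nu$ is an eigenvector with eigenvalue $\beta_\nu=\sum_j\nu'_j(2q-\nu'_j-1)=2\sum_i(q-i)\nu_i$. Inverting the unitriangular expansion $m_\nu=E_\nu+\sum_{\mu\prec\nu}b_{\nu\mu}E_\mu$ then gives triangularity of $\mathcal B_q$ and its diagonal entry $\beta_\nu$ at once, and $\mathcal A_q$ is diagonal.

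Your ``fallback check'' is exactly this argument, and it is a complete proof rather than just a check. The identity needed is simply $\sum_kb_kk(2q-k-1)=2\sum_i\nu_i(q-i)$, a column-versus-row count on $[\nu]$, with no $\mathcal A_q$ term. The paper's route is shorter because it reuses the unitriangularity of $E_\nu$ and the eigenvalues of $e_k$, which the section needs anyway. Your route costs a divided-difference computation and a majorization lemma. In exchange, it avoids the derivation property and the expansion in elementary symmetric polynomials, and it yields the off-diagonal coefficients explicitly as integers.
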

\begin{proof}
The derivation rule and \cref{st:elementary-eigenvalues} give
\[
\mathcal B_qE_\nu
=\beta_\nu E_\nu,
\qquad
\beta_\nu=\sum_{j=1}^{\nu_1}\nu'_j(2q-\nu'_j-1).
\]
Counting by rows and columns yields
\begin{equation}\label{st:beta-row-column}
\beta_\nu
=\sum_{j=1}^{\nu_1}\sum_{i=1}^{\nu'_j}2(q-i)
=2\sum_{i=1}^q(q-i)\nu_i.
\end{equation}
Invert the unitriangular expansion of \cref{st:elementary-unitriangular} on the downset of $\nu$:
for real coefficients $b_{\nu\mu}$,
\[
m_\nu=E_\nu+\sum_{\mu\prec\nu}b_{\nu\mu}E_\mu.
\]
Hence
\[
\mathcal B_qm_\nu
=\beta_\nu E_\nu+
 \sum_{\mu\prec\nu}b_{\nu\mu}\beta_\mu E_\mu.
\]
Re-expanding the elementary products according to
$$E_\mu=m_\mu+\sum_{\eta\prec\mu}a_{\mu\eta}m_\eta,$$
the first term contributes $\beta_\nu m_\nu$ and lower terms.  Every remaining term is
supported on $\eta\preceq\mu\prec\nu$.  Thus
$\mathcal B_q$ is triangular with diagonal entry $\beta_\nu$.
Moreover,
\[\mathcal A_qm_\nu
=\left(\sum_{i=1}^q\nu_i(\nu_i-1)\right)m_\nu,
\]
since each monomial in the orbit has the same exponent multiset.  Subtraction proves \eqref{st:triangular-operator}
and \eqref{st:eigenvalue}.  Finally,
\[
2\sum_{j=1}^{\nu_i}(q+1-i-j)
=2\nu_i(q+1-i)-\nu_i(\nu_i+1)
=\nu_i\bigl(2(q-i)-\nu_i+1\bigr),
\]
and summation over the rows proves
\eqref{st:eigenvalue-diagram}.
\end{proof}

\begin{lemma}\label{st:spectral-gap}
Suppose that $\lambda$ has length at most $q$ and satisfies
\begin{equation}\label{st:adjacent-drop}
\lambda_i-\lambda_{i+1}\leq1\qquad(1\leq i<q).
\end{equation}
Then $\varepsilon_\lambda>\varepsilon_\mu$ for every
$\mu\prec\lambda$ of length at most $q$.
\end{lemma}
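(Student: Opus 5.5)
Since dominance is generated by elementary moves, it suffices to show that $\varepsilon$ strictly decreases along a suitable chain of such moves descending from $\lambda$. The classical fact we use is that if $\mu\prec\lambda$, then there is a chain $\lambda=\nu^{(0)}\succ\nu^{(1)}\succ\cdots\succ\nu^{(s)}=\mu$. At each step, one box moves from a row $i$ to a lower row $k>i$; that is, $\nu_i$ decreases by one and $\nu_k$ increases by one, and the result is again a partition. All intermediate partitions have the same size, and their lengths are at most the length of $\mu$. Hence they have length at most $q$, so \eqref{st:eigenvalue-diagram} applies to each of them.

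Next we compute the change of $\varepsilon$ under one move, using the diagram formula \eqref{st:eigenvalue-diagram}. Moving the box at $(i,\nu_i)$ to $(k,\nu_k+1)$ changes $\varepsilon$ by
\[
2\bigl[(q+1-k-\nu_k-1)-(q+1-i-\nu_i)\bigr]=2\bigl[(\nu_i-i)-(\nu_k-k)-1\bigr]=2\bigl[c_{\mathrm{old}}-c_{\mathrm{new}}\bigr]\cdot(-1),
\]
where $c=j-i$ is the content of a box. In other words, $\varepsilon$ drops by $2(c_{\mathrm{old}}-c_{\mathrm{new}})$. Since a box moving to a lower row of a partition always lands at strictly smaller content, every single move strictly decreases $\varepsilon$. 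Indeed, $c_{\mathrm{new}}=\nu_k+1-k$ and $c_{\mathrm{old}}=\nu_i-i$. The condition that the result is a partition requires $\nu_k+1\le\nu_{k-1}$, and also $\nu_i-1\ge\nu_{i+1}$, so $\nu_i>\nu_{i+1}\ge\nu_k$ when $k>i+1$. This gives $c_{\mathrm{old}}-c_{\mathrm{new}}=(\nu_i-\nu_k-1)+(k-i)\ge k-i\ge1$. If $k=i+1$, then $\nu_i\ge\nu_k+2$, so the difference is at least $2$. In fact, this argument gives strict decrease for every $\mu\prec\lambda$ without using \eqref{st:adjacent-drop}.

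It follows that $\varepsilon_\lambda>\varepsilon_\mu$. The main step to check carefully is the existence of the chain within partitions of length at most $q$ (Muirhead/Brylawski's lemma on covering relations in dominance order). I would prove it directly. Take the first index $a$ where the partial sums differ, so $\lambda_a>\mu_a$. Take the last $b$ with a strictly positive partial-sum gap, so $\lambda_{b+1}<\mu_{b+1}$. Then move a box from the lowest row $i\ge a$ of $\lambda$ with $\lambda_i=\lambda_a$ to the highest row $k\le b+1$ with $\lambda_k=\lambda_{b+1}$. One checks that the result is a partition that still dominates $\mu$, and the gap $\sum_k\bigl|\sum_{r\le k}(\lambda_r-\mu_r)\bigr|$ decreases. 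The hypothesis \eqref{st:adjacent-drop} is then not needed for strictness, though it harmlessly covers the edge case $k=i+1$ in which a move is legitimate.
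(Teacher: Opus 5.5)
Your proof breaks at the single-move computation. By \eqref{st:eigenvalue-diagram}, $\varepsilon_\nu=2\sum_{(i,j)\in[\nu]}(q+1-i-j)$ depends on the coordinate sum $i+j$ of each cell, not on its content $j-i$. Moving the cell $(i,\nu_i)$ to $(k,\nu_k+1)$ with $k>i$ therefore changes $\varepsilon$ by
\[
2\bigl[(i+\nu_i)-(k+\nu_k+1)\bigr],
\]
not by $2[(\nu_i-i)-(\nu_k-k)-1]$. You flipped the sign of the row indices. The extra factor $-1$ you then attach does not follow from your own expression, which equals $+2(c_{\mathrm{old}}-c_{\mathrm{new}})$. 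The correct change is negative exactly when $\nu_i-\nu_k\leq k-i$, and being a partition does not force this. So your remark that \eqref{st:adjacent-drop} is superfluous is false. For $q=2$, $\lambda=(3,0)$ and $\mu=(2,1)\prec\lambda$ give $\varepsilon_\lambda=0<2=\varepsilon_\mu$, and $\lambda=(2,0)$, $\mu=(1,1)$ give equality. Content monotonicity is the Schur-function picture; $\mathcal D_q$ is the parameter $-1$ specialization, where $i+j$ replaces $j-i$.

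Even with the corrected formula, a move-by-move argument along a dominance chain does not work as stated. The hypothesis \eqref{st:adjacent-drop} holds for $\lambda$ but not for the intermediate partitions, and individual moves can raise $\varepsilon$. Take $q\geq7$ and the legitimate chain $(4,3,2,1)\to(4,3,1,1,1)\to(4,2,1,1,1,1)\to(4,1,1,1,1,1,1)\to(3,2,1,1,1,1,1)$. The last move sends $(1,4)$ to $(2,2)$ and increases $\varepsilon$ by $2$, although the net change is $-12$.

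What you need is to compare $\lambda$ and $\mu$ directly. Since the $q+1$ terms cancel for equal sizes,
\[
\varepsilon_\lambda-\varepsilon_\mu=2\Bigl[\sum_{(k,\ell)\in[\mu]\setminus[\lambda]}(k+\ell)-\sum_{(i,j)\in[\lambda]\setminus[\mu]}(i+j)\Bigr].
\]
The dominance inequalities let you pair each added cell $(k,\ell)$ with a distinct removed cell $(i,j)$ in an earlier row $i<k$. Then $j\leq\lambda_i$ and $\ell\geq\lambda_k+1$. Applying \eqref{st:adjacent-drop} to $\lambda$ itself gives $\lambda_i-\lambda_k\leq k-i$, hence $(k+\ell)-(i+j)\geq1$ for every pair. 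This is the paper's argument, and it even yields $\varepsilon_\lambda-\varepsilon_\mu\geq2\,\bigl|[\lambda]\setminus[\mu]\bigr|$.
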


\begin{proof}
Call the cells of $[\lambda]\setminus[\mu]$ removed and those
of $[\mu]\setminus[\lambda]$ added.  No row contains both types.
Dominance says that each initial collection of rows contains at
least as many removed cells as added cells.  Scanning the rows
therefore pairs each added cell with an unpaired removed cell in
a earlier row: an added row has no removed cells, and the
dominance inequality after the entire row guarantees enough earlier
removed cells.  Equal sizes ensure that every removed cell is paired.

For a paired removed cell $(i,j)$ and added cell $(k,\ell)$,
where $k>i$, we have
\[
j\leq\lambda_i,\qquad \ell\geq\lambda_k+1.
\]
Condition \eqref{st:adjacent-drop} gives
$\lambda_i-\lambda_k\leq k-i$, whence
\begin{equation}\label{st:cell-level-increase}
(k+\ell)-(i+j)\geq k-i+\lambda_k+1-\lambda_i\geq1.
\end{equation}
Cancel the common cells and sum over the nonempty set of pairs:
\[
\sum_{(i,j)\in[\mu]}(i+j)
>
\sum_{(i,j)\in[\lambda]}(i+j).
\]
By \eqref{st:eigenvalue-diagram} and equality of sizes, this gives
the required eigenvalue inequality.  In fact, if
$r=|[\lambda]\setminus[\mu]|$, the same pairing proves
\begin{equation}\label{st:gap-lower-bound}
\varepsilon_\lambda-\varepsilon_\mu\geq2r>0.
\end{equation}
This completes the proof.
\end{proof}

\begin{theorem}\label{st:stable-eigenfunction}
Let $q\geq1$ and let $\lambda$ have length at most $q$ and satisfy
\eqref{st:adjacent-drop}.  There is a unique polynomial
$P_\lambda\in\mathcal V_\lambda$ of the form
\begin{equation}\label{st:monic-eigenfunction-form}
P_\lambda=m_\lambda+\sum_{\mu\prec\lambda}p_{\lambda\mu}m_\mu
\end{equation}
satisfying
\begin{equation}\label{st:monic-eigenfunction-equation}
\mathcal D_qP_\lambda=\varepsilon_\lambda P_\lambda.
\end{equation}
It has real coefficients and is real stable.  Consequently, every
polynomial in $\mathcal V_\lambda$ with a nonzero $m_\lambda$
coefficient and satisfying \eqref{st:monic-eigenfunction-equation}
is stable.
\end{theorem}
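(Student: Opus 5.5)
Existence and uniqueness come from the triangularity in \cref{st:operator-triangular} together with the spectral gap in \cref{st:spectral-gap}. Stability is then obtained from the evolution $e^{t\mathcal D_q}$ of \cref{st:evolution-preserves} as $t\to\infty$, starting from the real stable polynomial $E_\lambda$ of \cref{st:elementary-unitriangular}.

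For existence and uniqueness, restrict $\mathcal D_q$ to the finite-dimensional invariant space $\mathcal V_\lambda$. In the basis $\{m_\mu:\mu\preceq\lambda\}$, ordered by any linear extension of dominance, the matrix is triangular with real entries and diagonal entries $\varepsilon_\mu$. \cref{st:spectral-gap} gives $\varepsilon_\lambda\neq\varepsilon_\mu$ for $\mu\prec\lambda$. Substituting the ansatz \eqref{st:monic-eigenfunction-form} into \eqref{st:monic-eigenfunction-equation} and comparing coefficients of $m_\mu$ gives, for $\mu\prec\lambda$,
\[
(\varepsilon_\lambda-\varepsilon_\mu)p_{\lambda\mu}=c_{\lambda\mu}+\sum_{\mu\prec\eta\prec\lambda}p_{\lambda\eta}c_{\eta\mu}.
\]
This recursion, run downward in dominance, determines each $p_{\lambda\mu}$ uniquely, and the values are real. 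Equivalently, the $\varepsilon_\lambda$-eigenspace of $\mathcal D_q$ on $\mathcal V_\lambda$ is one-dimensional and is not contained in $\operatorname{span}\{m_\mu:\mu\prec\lambda\}$. The "consequently" clause then follows: an eigenvector in $\mathcal V_\lambda$ with nonzero $m_\lambda$ coefficient is a nonzero scalar multiple of $P_\lambda$, and stability is invariant under nonzero scaling.

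For stability, set
\[
G_t=e^{-t\varepsilon_\lambda}e^{t\mathcal D_q}E_\lambda,\qquad t\geq0.
\]
By \cref{st:evolution-preserves}, $e^{t\mathcal D_q}E_\lambda$ is real stable, and multiplying by the positive scalar $e^{-t\varepsilon_\lambda}$ keeps it real stable. Decompose $\mathcal V_\lambda=\CC P_\lambda\oplus W$, where $W=\operatorname{span}\{m_\mu:\mu\prec\lambda\}$ is $\mathcal D_q$-invariant by triangularity. Since $E_\lambda=m_\lambda+(\text{element of }W)$, we can write $E_\lambda=P_\lambda+w$ with $w\in W$. The eigenvalues of $\mathcal D_q$ on $W$ are the $\varepsilon_\mu$, $\mu\prec\lambda$, and these are all strictly smaller than $\varepsilon_\lambda$. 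Hence
\[
\|e^{t(\mathcal D_q-\varepsilon_\lambda)}|_W\|\leq C(1+t)^{K}e^{-\delta t}\to0,
\]
with $\delta=\min_{\mu\prec\lambda}(\varepsilon_\lambda-\varepsilon_\mu)>0$. The polynomial factor allows for possible Jordan blocks; one can also take $\delta\geq2$ by \eqref{st:gap-lower-bound}. Therefore $G_t=P_\lambda+e^{t(\mathcal D_q-\varepsilon_\lambda)}w\to P_\lambda$ coefficientwise in the fixed-degree space. By \cref{st:limit-closure}, $P_\lambda$ is stable or zero, and it is nonzero because its $m_\lambda$ coefficient is $1$. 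Its coefficients are real, so $P_\lambda$ is real stable.

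The main obstacle is the decay estimate on $W$ when $\mathcal D_q|_W$ may fail to be diagonalizable. I would handle it by triangularity: write $\mathcal D_q-\varepsilon_\lambda$ on $W$ as a diagonal part with entries at most $-\delta$ plus a nilpotent part, then bound the matrix exponential entrywise by induction along the dominance order, giving the polynomial-times-exponential bound above. The remaining facts are already in hand: the invariance of $W$ and the realness of coefficients come directly from \cref{st:operator-triangular}.
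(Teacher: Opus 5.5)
Your proposal is correct and follows the paper's proof. Existence and uniqueness come from the triangular recursion together with the spectral gap of \cref{st:spectral-gap}, and stability comes from evolving $E_\lambda$ under $e^{-t\varepsilon_\lambda}e^{t\mathcal D_q}$ and passing to the limit with \cref{st:limit-closure}. The only difference is in how convergence is shown: the paper argues coefficient by coefficient via variation of constants along a linear extension of dominance, whereas you split $\mathcal V_\lambda=\CC P_\lambda\oplus W$ and use a decay bound for the matrix exponential on $W$, which comes down to the same induction.
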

\begin{proof}
Order the finite dominance downset with larger partitions first,
extending dominance to a total order, and set
$p_{\lambda\lambda}=1$.
By \cref{st:operator-triangular}, we have
$$\mathcal D_q m_\nu=\varepsilon_\nu m_\nu+\sum_{\mu\prec\nu}c_{\nu\mu}m_\mu$$
According to
$$\mathcal D_q\left(m_\lambda+\sum_{\mu\prec\lambda}p_{\lambda\mu}m_\mu\right)=\varepsilon_\lambda \left(m_\lambda+\sum_{\mu\prec\lambda}p_{\lambda\mu}m_\mu\right),$$
the $m_\mu$ coefficient of the eigenvalue equation is
\[\varepsilon_\mu p_{\lambda\mu}+\sum_{\mu\prec\nu\preceq\lambda}c_{\nu\mu}p_{\lambda\nu}=\varepsilon_\lambda p_{\lambda\mu}.
\]
For $\mu=\lambda$ this is automatic; otherwise it gives
\begin{equation}\label{st:eigenfunction-recursion}
p_{\lambda\mu}
=\frac{\displaystyle\sum_{\mu\prec\nu\preceq\lambda}
        c_{\nu\mu}p_{\lambda\nu}}
       {\varepsilon_\lambda-\varepsilon_\mu}.
\end{equation}
All coefficients on the right occur earlier in the order, and every
denominator is positive by \cref{st:spectral-gap}.
Thus the recursion gives existence, uniqueness, and real coefficients.

To prove stability, set
\begin{equation}\label{st:normalized-evolution}
P_t=e^{-t\varepsilon_\lambda}
       e^{t\mathcal D_q}E_\lambda,
\qquad t\geq0.
\end{equation}
By \cref{st:elementary-unitriangular} and \cref{st:evolution-preserves}, each $P_t$ is real stable.
The invariance of $\mathcal V_\lambda$ keeps $P_t$ in that space,
and differentiation gives
\begin{equation}\label{st:normalized-differential-equation}
P_t'=(\mathcal D_q-\varepsilon_\lambda I)P_t.
\end{equation}
Triangularity shows that the $m_\lambda$ coefficient remains
at its initial value $1$.  Write
\[
P_t=\sum_{\mu\preceq\lambda}u_\mu(t)m_\mu,
\qquad u_\lambda(t)=1.
\]
For $\mu\prec\lambda$, coefficient comparison according to \eqref{st:normalized-differential-equation} gives
\begin{equation}\label{st:coefficient-evolution}
u_\mu'(t)
=-(\varepsilon_\lambda-\varepsilon_\mu)u_\mu(t)
  +\sum_{\mu\prec\nu\preceq\lambda}
      c_{\nu\mu}u_\nu(t).
\end{equation}
Here $c_{\nu\mu}=0$ for absent terms.

We prove coefficient convergence in the same total order.  Suppose that all preceding coefficients of $u_{\mu}(t)$ converge.  Then
$v_\mu(t)=\sum_{\mu\prec\nu\preceq\lambda}c_{\nu\mu}u_\nu(t)$
is bounded and has a limit $v_\mu(\infty)$.
Set $\delta_\mu=\varepsilon_\lambda-\varepsilon_\mu>0$.
According to \eqref{st:coefficient-evolution}, we have
$$u'_\mu(t) + \delta_\mu u_\mu(t) = v_\mu(t)$$
Thus, we have $e^{\delta_\mu t} u'_\mu(t) + e^{\delta_\mu t}\delta_\mu u_\mu(t) =e^{\delta_\mu t} v_\mu(t)$.
Therefore, we obtain
$$(e^{\delta_\mu t} u_\mu(t))'=e^{\delta_\mu t} v_\mu(t).$$
Integrating from $0$ to $t$ and then dividing by $e^{\delta_\mu t}$, we obtain 
\begin{equation}\label{st:variation-of-constants}
u_\mu(t)=e^{-\delta_\mu t}u_\mu(0)
 +\int_0^t e^{-\delta_\mu(t-s)}v_\mu(s)\,\mathrm ds.
\end{equation}
It follows that
\begin{align*}
u_\mu(t)-\frac{v_\mu(\infty)}{\delta_\mu}=e^{-\delta_\mu t}\left(u_\mu(0)-\frac{v_\mu(\infty)}{\delta_\mu}\right)+\int_0^t e^{-\delta_\mu(t-s)}(v_\mu(s)-v_\mu(\infty))\,\mathrm ds.
\end{align*}
For $T>0$, set
$K_T=\sup_{0\leq s\leq T}|v_\mu(s)-v_\mu(\infty)|$.
When $t\geq T$, the part of the last integral over $[0,T]$ is
bounded in modulus by $TK_Te^{-\delta_\mu(t-T)}$, and the
remaining part by
\[
\sup_{s\geq T}|v_\mu(s)-v_\mu(\infty)|
 \int_T^t e^{-\delta_\mu(t-s)}\,\mathrm ds
\leq\frac1{\delta_\mu}
 \sup_{s\geq T}|v_\mu(s)-v_\mu(\infty)|.
\]
First let $t\to\infty$ and then $T\to\infty$.
Both bounds vanish, proving
$u_\mu(t)\to v_\mu(\infty)/\delta_\mu$ and completing the induction.
Thus $u_\mu(t)$ converges. This proves convergence of all coefficients.

Writing $\ell_\mu=\lim_{t\to\infty}u_\mu(t)$, we obtain
$\ell_\lambda=1$ and
\begin{equation}\label{st:limit-recursion}
\ell_\mu
=\frac{v_\mu(\infty)}{\delta_\mu}
=\frac{\displaystyle\sum_{\mu\prec\nu\preceq\lambda}
                 c_{\nu\mu}\ell_\nu}
       {\varepsilon_\lambda-\varepsilon_\mu}
\qquad(\mu\prec\lambda).
\end{equation}
This is exactly \eqref{st:eigenfunction-recursion}; uniqueness
identifies the limit with $P_\lambda$.  The limit is nonzero,
since its $m_\lambda$ coefficient is $1$. \cref{st:limit-closure}, applied to $P_1,P_2,\ldots$,
therefore proves real stability.

Finally, dividing any polynomial in the last assertion by its
nonzero $m_\lambda$ coefficient gives $P_\lambda$ by uniqueness.
A nonzero scalar multiple has the same zero set.
\end{proof}

\section{A stable polynomial associated with the constant term}
\label{ct:section}

To prove real-rootedness, we realize the polynomial in
\eqref{eq:positive-ct} as a specialization of a symmetric stable
polynomial.  Its constant-term definition gives the dominance support,
leading coefficient, and eigenvalue equation required by \cref{st:stable-eigenfunction}.

Throughout this section, $m\geq1$ and
$q\geq\max(1,2m-2)$ are integers, and $B=m(m-1)$.  An
\emph{alphabet} $b=(b_1,\ldots,b_q)$ means simply a list of
commuting, algebraically independent indeterminates.  Initially all
calculations take place over $\mathbb Q$.  Rational functions in
$b,z_1,\ldots,z_m$ are expanded in the fixed field
\[
 \mathbb Q(b_1,\ldots,b_q)((z_1))\cdots((z_m)).
\]
Thus $z_m$ is the outermost Laurent-series variable, and for $i<j$
the inverse factor $(z_i-z_j)^{-1}$ is expanded in nonnegative
powers of $z_j/z_i$.  In this
section $\CT_z$ always denotes successive extraction of the
coefficient of $z_m^0$, then $z_{m-1}^0$, and so on, ending with
$z_1^0$.  This is the map used in \eqref{eq:T-definition}; in
particular,
\[
 T_m(f)=\CT_z\bigl(\Delta_m(z)^{-2}f\bigr),
 \qquad \Delta_m(z)=\prod_{1\leq i<j\leq m}(z_i-z_j).
\]
The finite expansion below shows that $F_{m,q}$ belongs to
$\mathbb Q[b_1,\ldots,b_q]$.  The coefficient field permits rational
identities throughout the proof.  For stability, we regard these
polynomials as elements of $\mathbb R[b_1,\ldots,b_q]$.

For the support calculation, set
\begin{equation}\label{ct:partitions}
 a_i=2(m-i)\quad(1\leq i\leq m),\qquad
 \lambda=(m-1,m-1,m-2,m-2,\ldots,1,1).
\end{equation}
Append zero parts to $\lambda$ to give it length $q$.  It is conjugate
to the partition $a=(2m-2,2m-4,\ldots,2,0)$, since
\[
 \lambda_r=\#\{i:a_i\geq r\}
 =m-\lceil r/2\rceil\quad(1\leq r\leq2m-2).
\]
Both partitions have size $B=m(m-1)$, because
$\sum_{i=1}^m2(m-i)=m(m-1)$.  The condition on $q$ ensures that
$a_i\leq q$ for every $i$ and that $\lambda$ has at most $q$
nonzero parts.  For $m=1$ the displayed list of positive parts of
$\lambda$ is empty: it denotes the zero partition.

We retain the dominance order $\preceq$ and the monomial
symmetric polynomials $m_\nu$ from \cref{st:section}.
Explicitly, for partitions $\nu,\lambda$ of the same size,
$\nu\preceq\lambda$ means
$\sum_{r=1}^k\nu_r\leq\sum_{r=1}^k\lambda_r$ for
$1\leq k\leq q$.  The notation $\nu\prec\lambda$
means additionally $\nu\ne\lambda$.  The polynomial $m_\nu(b)$
is the sum of the distinct monomials whose exponent vectors are
permutations of $\nu$.  In particular, the coefficient of
$m_\lambda$ in a symmetric polynomial is exactly the coefficient
of the single monomial $b_1^{\lambda_1}\cdots b_q^{\lambda_q}$.

Define
\begin{equation}\label{ct:universal-definition}
 K(b,z)=\prod_{r=1}^{q}\prod_{i=1}^{m}(1+b_rz_i),\qquad
 F_{m,q}(b)=\CT_z\frac{K(b,z)}{\Delta_m(z)^2}.
\end{equation}
Empty products are one.  In particular, $F_{1,q}=1$.

\subsection{The finite expansion and its monomial support}

For $0\leq d\leq q$, the elementary symmetric polynomial is
\[
 e_d(b)=\sum_{\substack{S\subseteq\{1,\ldots,q\}\\|S|=d}}
          \prod_{r\in S}b_r.
\]
Thus $e_0=1$, and we set $e_d=0$ for $d<0$ or $d>q$.
These definitions give the polynomial identity
\[
 \prod_{r=1}^q(1+b_rz_i)=\sum_{d=0}^q e_d(b)z_i^d.
\]

Let $k=(k_{ij})_{1\leq i<j\leq m}$ be a nonnegative integer
array, viewed as a flow on the edges $i\to j$ with $i<j$.
Its netflow at $i$ is
$\sum_{j>i}k_{ij}-\sum_{h<i}k_{hi}$.

\begin{lemma}\label{ct:finite-expansion}
The constant term $F_{m,q}$, computed in
$\mathbb Q(b)((z_1))\cdots((z_m))$, is a polynomial in
$\mathbb Z[b_1,\ldots,b_q]$.  More precisely, for an array $k$
as above set
\begin{equation}\label{ct:degree-vector}
 d_i=a_i+\sum_{j>i}k_{ij}-\sum_{h<i}k_{hi}.
\end{equation}
Then one has the finite expansion
\begin{equation}\label{ct:finite-formula}
 F_{m,q}(b)=
 \sum_{\substack{k_{ij}\geq0\\0\leq d_i\leq q\ (1\leq i\leq m)}}
 \left(\prod_{i<j}(k_{ij}+1)\right)\prod_{i=1}^m e_{d_i}(b).
\end{equation}
In particular, $F_{m,q}$ is symmetric, homogeneous of degree $B$,
and has nonnegative integer coefficients.
\end{lemma}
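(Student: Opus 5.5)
The plan is to follow the flow computation of Section~2.3 almost verbatim, with the trinomial factor replaced by $\prod_{r=1}^q(1+b_rz_i)=\sum_{d=0}^q e_d(b)z_i^d$. First I expand $K(b,z)=\prod_{i=1}^m\sum_{d_i=0}^q e_{d_i}(b)z_i^{d_i}$, which is a finite sum of terms $\prod_i e_{d_i}(b)\,z^{d}$. By linearity of $\CT_z$ over the coefficient field $\mathbb Q(b)$, we get $F_{m,q}(b)=\sum_{d\in\{0,\ldots,q\}^m}\bigl(\prod_ie_{d_i}(b)\bigr)T_m(z^d)$. This is a finite sum over $d$, so the only remaining task is to evaluate each $T_m(z^d)$.

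Second, I evaluate $T_m(z^d)$ with the expansion \eqref{eq:flow-denominator}, writing the edge exponents as $k_{ij}$. The computation leading to \eqref{eq:monomial-balance} shows that a term survives exactly when
\[
d_i=2(m-i)+\sum_{j>i}k_{ij}-\sum_{h<i}k_{hi}=a_i+\sum_{j>i}k_{ij}-\sum_{h<i}k_{hi},
\]
and that it contributes $\prod_{i<j}(k_{ij}+1)$. The cut argument after \eqref{eq:general-cut} shows that for each $d$ only finitely many $k$ satisfy this. Hence $T_m(z^d)=\sum_k\prod(k_{ij}+1)$, where the sum runs over arrays $k$ whose netflow vector \eqref{ct:degree-vector} equals $d$.

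Third, I reindex. Summing over $d$ with $0\le d_i\le q$ and then over the compatible $k$ is the same as summing over all arrays $k\ge0$ whose degree vector \eqref{ct:degree-vector} lies in $[0,q]^m$. This gives \eqref{ct:finite-formula}. The sum is finite because each $d$ admits finitely many $k$. Equivalently, the cut bound $\sum_{i\le k<j}k_{ij}=\sum_{i\le k}d_i-2km+k(k+1)\le kq-2km+k(k+1)$ bounds every edge directly.

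The consequences are then immediate. Each $e_d$ has nonnegative integer coefficients and each edge weight is a positive integer, so $F_{m,q}\in\ZZ_{\ge0}[b]$. Each summand is a product of symmetric polynomials, so $F_{m,q}$ is symmetric. Summing the balance equations over $i$ cancels the flows and gives $\sum_i d_i=\sum_i a_i=m(m-1)=B$, so every summand is homogeneous of degree $B$.

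I do not expect a real obstacle. The only point needing care is that $\CT_z$ commutes with the finite sum over $d$ and with multiplication by scalars from $\mathbb Q(b)$; this holds because the $z$-expansion is taken over the coefficient field $\mathbb Q(b)$, so the extraction acts coefficientwise, as in the definition of $T_m$. The case $m=1$ must also be checked separately: there are no edges, $d_1=a_1=0$ is forced, and $F_{1,q}=e_0=1$.
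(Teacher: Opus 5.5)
Your proposal is correct and follows essentially the same route as the paper. Both arguments expand each $(z_i-z_j)^{-2}$ edge by edge, match the resulting $z_i$-exponent $-d_i$ with the term $e_{d_i}(b)z_i^{d_i}$ of the numerator, bound every $k_{ij}$ by the prefix-cut identity, and read off symmetry, nonnegative integrality and homogeneity of degree $B$ from $\sum_i d_i=B$. The paper sums over the arrays $k$ directly, whereas you first evaluate each $T_m(z^d)$ and then reindex, which is only a reordering of the same finite double sum; note also that you reuse the letter $k$ for both the cut index and the flow array.
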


\begin{proof}
For each $i<j$, the embedding in the fixed Laurent-series field
gives
\[
 (z_i-z_j)^{-2}
 =z_i^{-2}\sum_{k_{ij}\geq0}(k_{ij}+1)(z_j/z_i)^{k_{ij}}.
\]
An array $k$ selects one term from each factor.  The resulting exponent of $z_i$ (for $\Delta_m(z)^2$) is
\[
 -2(m-i)-\sum_{j>i}k_{ij}+\sum_{h<i}k_{hi}=-d_i.
\]
Its coefficient is $\prod_{i<j}(k_{ij}+1)$, and the numerator
must supply the coefficient $\prod_i e_{d_i}(b)$.  This gives the
summands in \eqref{ct:finite-formula}.

To prove finiteness, sum \eqref{ct:degree-vector} over the first
$r$ vertices.  Internal edges cancel, leaving the cut identity
for $1\leq r<m$:
\begin{equation}\label{ct:prefix}
 \sum_{i=1}^r d_i
 =\sum_{i=1}^r a_i+\sum_{i\leq r<j}k_{ij}.
\end{equation}
If $0\leq d_i\leq q$ for every $i$, this gives
\[
 0\leq\sum_{i\leq r<j}k_{ij}
 \leq rq-\sum_{i=1}^r a_i\leq rq.
\]
Taking $r=i$ bounds each entry by $0\leq k_{ij}\leq iq$.
There are therefore finitely many contributing arrays, proving
\eqref{ct:finite-formula}.  For $m=1$, the sum consists of the
single empty array.

Every factor $e_{d_i}$ is symmetric in $b$ and has nonnegative
integer coefficients; the scalar weight is a positive integer.
Finally, summing \eqref{ct:degree-vector} over all $i$ cancels
each $k_{ij}$ once with each sign, and yields
\begin{equation}\label{ct:total-degree}
 \sum_{i=1}^m d_i=\sum_{i=1}^m a_i=m(m-1)=B.
\end{equation}
This proves homogeneity and all the remaining assertions.
\end{proof}

\begin{lemma}\label{ct:dominance}
In the polynomial ring $\mathbb Q[b_1,\ldots,b_q]$, one has
\begin{equation}\label{ct:monic-triangular}
 F_{m,q}=m_\lambda+\sum_{\nu\prec\lambda}c_\nu m_\nu,
\qquad c_\nu\in\NN.
\end{equation}
Equivalently, the decreasing rearrangement of every exponent
vector of a monomial occurring in $F_{m,q}$ is dominated by
$\lambda$, and the coefficient of $b^\lambda$ is exactly one.
\end{lemma}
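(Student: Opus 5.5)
The plan is to read the dominance structure directly off the finite expansion \eqref{ct:finite-formula} of \cref{ct:finite-expansion}, and to compare each summand with the elementary products $E_\rho$ of \cref{st:elementary-unitriangular}.

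\textbf{Step 1: each summand is an elementary product.} For a contributing array $k$, let $\delta=\mathrm{sort}(d)$ be the decreasing rearrangement of $(d_1,\dots,d_m)$. Its parts lie in $[0,q]$, and $|\delta|=B$ by \eqref{ct:total-degree}. Since the $e_{d_i}$ commute,
\[\prod_{i=1}^m e_{d_i}(b)=E_\rho,\qquad \rho=\delta',\]
where $\rho$ has at most $q$ parts because $\delta_1\le q$. The factors $e_0=1$ are harmless.

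By \cref{st:elementary-unitriangular}, $E_\rho=m_\rho+\sum_{\mu\prec\rho}a_{\rho\mu}m_\mu$ with $a_{\rho\mu}\in\NN$. It therefore suffices to prove two things:
\begin{enumerate}
\item[(a)] $\rho\preceq\lambda$ for every contributing array;
\item[(b)] $\rho=\lambda$ only for the zero array.
\end{enumerate}
The zero array has weight $\prod(k_{ij}+1)=1$. Given (a) and (b), the coefficient of $m_\lambda$ is exactly $1$, and all other coefficients are nonnegative integers supported on $\nu\prec\lambda$.

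\textbf{Step 2: pass to conjugates.} I will use the standard fact that conjugation reverses dominance for partitions of equal size: $\rho\preceq\lambda$ iff $\rho'\succeq\lambda'$. I would include a one-line justification or a citation. Since $\lambda'=a=(2m-2,\dots,2,0)$ is already decreasing, (a) becomes
\[\sum_{s=1}^r\delta_s\;\ge\;\sum_{i=1}^r a_i\qquad(1\le r\le m).\]
The prefix identity \eqref{ct:prefix} gives $\sum_{i\le r}d_i=\sum_{i\le r}a_i+\sum_{i\le r<j}k_{ij}\ge\sum_{i\le r}a_i$. The sum of the $r$ largest entries of $d$ is at least the sum of its first $r$ entries, which proves (a). Equality at $r=m$ holds by \eqref{ct:total-degree}.

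\textbf{Step 3: the equality case.} If $\rho=\lambda$, then $\delta=a$. For every $r$ this gives the chain
\[\sum_{i\le r}a_i\le\sum_{i\le r}d_i\le\sum_{s\le r}\delta_s=\sum_{i\le r}a_i,\]
so every cut flow $\sum_{i\le r<j}k_{ij}$ vanishes. Each edge crosses the cut at $r=i$, so all $k_{ij}=0$ and $d=a$. This proves (b) and finishes the argument.

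The only delicate point is the direction of dominance under conjugation, together with the fact that the conjugates involved have at most $q$ parts. The latter is guaranteed because $d_i\le q$ and $q\ge 2m-2$. Everything else is the cut bookkeeping already carried out in \cref{ct:finite-expansion}.
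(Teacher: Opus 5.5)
Your proof is correct, and it takes a more modular route than the paper's.

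The paper works with individual monomials. It re-derives the subset-counting bound $\sum_i|S_i\cap R|\leq\sum_i\min(\ell,d_i)$. It then proves $\sum_i\min(\ell,d_i)\leq\sum_i\min(\ell,a_i)$ by the $(u)_+$ computation, which is exactly the needed direction of ``conjugation reverses dominance'', carried out for the unsorted vector $d$. Finally it checks by hand that $b^\lambda$ arises only from the zero array with the unique subset choice $S_i=\{1,\ldots,a_i\}$.

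You instead recognize each summand as $E_\rho$ with $\rho=\mathrm{sort}(d)'$. The support bound and the unique-subset argument are then inherited from \cref{st:elementary-unitriangular}, and the $\min$-inequality is replaced by the conjugation duality of dominance. The equality case is the same in both: $d$ and $a$ have the same multiset of entries, so every cut flow vanishes and $k=0$.

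Your version is shorter and proves slightly more. It shows $F_{m,q}$ is a nonnegative integer combination of the $E_\rho$ with $\rho\preceq\lambda$, in which $E_\lambda$ occurs exactly once. The paper's version is self-contained and needs no external fact about conjugates. When you write yours up, include the one-line proof of the conjugation fact (it is standard, e.g.\ in Macdonald's \emph{Symmetric Functions and Hall Polynomials}). Also state explicitly that the zero array is a contributing array, because $0\leq a_i\leq 2m-2\leq q$.
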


\begin{proof}
We first bound the support of each summand in
\eqref{ct:finite-formula}, then compute the coefficient of $b^\lambda$.

Fix a contributing array in \eqref{ct:finite-formula}.  A monomial
in $\prod_{i=1}^m e_{d_i}(b)$ is specified by subsets
$S_i\subseteq\{1,\ldots,q\}$ with $|S_i|=d_i$; its exponent
of $b_r$ is the number of sets $S_i$ containing $r$.  If
$R\subseteq\{1,\ldots,q\}$ has size $\ell$, the sum of the
exponents at positions in $R$ is $\sum_{i=1}^m|S_i\cap R|$.  Since
$|S_i\cap R|\leq|R|=\ell$ and
$|S_i\cap R|\leq|S_i|=d_i$ separately for every $i$, it follows
that
\begin{equation}\label{ct:subset-count}
 \sum_{i=1}^m|S_i\cap R|\leq\sum_{i=1}^m\min(\ell,d_i).
\end{equation}
We claim that
\begin{equation}\label{ct:min-bound}
 \sum_{i=1}^m\min(\ell,d_i)\leq\sum_{i=1}^m\min(\ell,a_i).
\end{equation}
This does not require the entries $d_i$ to be decreasing.
Indeed, fix $1\leq \ell\leq q$ and let
$p=\#\{i:a_i>\ell\}$.  Since $a$ is decreasing, these
indices form the prefix $\{1,\ldots,p\}$.  Write
$(u)_+=\max(u,0)$.  Equation~\eqref{ct:prefix} implies
\begin{align*}
\sum_{i=1}^m(d_i-\ell)_+\geq\sum_{i=1}^{p}(d_i-\ell)\geq\sum_{i=1}^{p}(a_i-\ell)=\sum_{i=1}^m(a_i-\ell)_+.
 \end{align*}
The first inequality discards nonnegative terms outside the prefix
and uses $(d_i-\ell)_+\geq d_i-\ell$ inside it.  The second follows
from \eqref{ct:prefix}, or is immediate when $p=0$; since $a_m=0$,
we have $p<m$.  
Subtracting from $\sum_{i=1}^m d_i=\sum_{i=1}^m a_i=B$ gives
$$\sum_{i=1}^m d_i-\sum_{i=1}^m(d_i-\ell)_+ \leq \sum_{i=1}^m a_i- \sum_{i=1}^m(a_i-\ell)_+$$
Using $\min(\ell,u)=u-(u-\ell)_+$ proves \eqref{ct:min-bound}.

Let $\nu$ be the decreasing rearrangement of the monomial's
exponents, and take $R$ to contain the positions of the $\ell$
largest exponents.  By \eqref{ct:subset-count} and
\eqref{ct:min-bound}, we have 
\begin{equation}\label{ct:dominance-chain}
 \sum_{r=1}^\ell\nu_r
 \leq\sum_{i=1}^m\min(\ell,d_i)
 \leq\sum_{i=1}^m\min(\ell,a_i)
 =\sum_{r=1}^\ell\lambda_r.
\end{equation}
The last equality is the conjugacy relation between $a$ and
$\lambda$.  Together with \eqref{ct:total-degree}, these inequalities
prove $\nu\preceq\lambda$.

We prove next that the coefficient of the particular monomial
$b^\lambda=b_1^{\lambda_1}\cdots b_q^{\lambda_q}$ is exactly
one.  The zero array $k_{ij}=0$, together with the choices
$S_i=\{1,\ldots,a_i\}$, produces this monomial and has weight
one, so its coefficient is at least one.  Indeed, the number of
these sets containing $r$ is $\#\{i:a_i\geq r\}=\lambda_r$.

Suppose conversely that some contributing array and subsets
produce $b^\lambda$.  Choose $R=\{1,\ldots,\ell\}$ for
$1\leq \ell\leq q$.  Then the sum of the selected exponents is
$\lambda_1+\cdots+\lambda_\ell$, so the two ends of
\eqref{ct:dominance-chain} are equal for each such $\ell$.
All intermediate inequalities must therefore be equalities, and
in particular
\begin{align}\label{Equation-Elll-AD}
 \sum_{i=1}^m\min(\ell,d_i)=\sum_{i=1}^m\min(\ell,a_i)
 \quad\text{for every integer }\ell\geq0.
\end{align}
For $\ell=0$ both sides are zero; for $\ell>q$ both are $B$.
Since $\min(\ell,u)-\min(\ell-1,u)$ equals 1 when $u\geq \ell$ and 0 otherwise, 
Taking the difference between $\ell$ and $\ell-1$ using equation \eqref{Equation-Elll-AD}, we obtain:
$\#\{i:d_i\geq\ell\}=\#\{i:a_i\geq\ell\}$ for every $\ell\geq1$.
Hence $d$ and $a$ have the same multiset of entries.  Since $a$ is
decreasing, the sum of any $r$ entries of $d$ cannot exceed
$a_1+\cdots+a_r$.  Comparing this fact with
\eqref{ct:prefix} gives
\[
 \sum_{i\leq r<j}k_{ij}=0\quad(1\leq r<m).
\]
Every $k_{ij}$ occurs in the cut with $r=i$, so nonnegativity
forces $k_{ij}=0$.  Thus $d_i=a_i$ for every $i$.

There is also only one possible choice of the sets $S_i$.
Equality in \eqref{ct:subset-count} for
$R=\{1,\ldots,\ell\}$ gives
\[
 \sum_{i=1}^m|S_i\cap\{1,\ldots,\ell\}|
 =\sum_{i=1}^m\min(\ell,a_i).
\]
Equality holds term by term.  Taking $\ell=a_i$ when $a_i>0$
therefore gives $S_i=\{1,\ldots,a_i\}$; for $a_i=0$, the set
$S_i$ is empty.  Thus $b^\lambda$ has a unique contribution,
of weight one.  Symmetry and \cref{ct:finite-expansion}
now give \eqref{ct:monic-triangular}.
\end{proof}

\subsection{The differential equation}

For an alphabet $x=(x_1,\ldots,x_s)$, let
$\partial_{x_r}$ denote formal partial differentiation with
respect to $x_r$, treating the remaining variables as constants.
The differential operator needed here, with the same normalization
as in \cref{st:section}, is
\begin{equation}\label{ct:operator}
 \mathcal D_s
 =-\sum_{r=1}^{s}x_r^2\partial_{x_r}^2
 +2\sum_{1\leq r<t\leq s}
 \frac{x_r^2\partial_{x_r}-x_t^2\partial_{x_t}}{x_r-x_t}.
\end{equation}
For symmetric $f$, the numerator
$x_r^2\partial_{x_r}f-x_t^2\partial_{x_t}f$ is antisymmetric in
$x_r,x_t$ and hence divisible by $x_r-x_t$.  Thus
$\mathcal D_s$ preserves symmetric polynomials over any field
of characteristic zero.
We write $\mathcal D_q^{(b)}$ or $\mathcal D_m^{(z)}$ when both
alphabets occur and the variables ($b$ or $z$) being differentiated must be
specified.

\begin{lemma}\label{ct:kernel-identity}
Let $E_b=\sum_{r=1}^qb_r\partial_{b_r}$ and
$E_z=\sum_{i=1}^mz_i\partial_{z_i}$ be the derivative
operators in the two alphabets.  The following identity holds in
$\mathbb Q[b_1,\ldots,b_q,z_1,\ldots,z_m]$ for the polynomial
$K(b,z)$ defined in \eqref{ct:universal-definition}:
\begin{equation}\label{ct:kernel-intertwining}
 \mathcal D_q^{(b)}K(b,z)-\mathcal D_m^{(z)}K(b,z)=2(q-m)E_bK(b,z),\qquad E_bK(b,z)=E_zK(b,z).
\end{equation}
\end{lemma}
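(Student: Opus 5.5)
Both identities are direct computations on the product $K(b,z)=\prod_{r,i}(1+b_rz_i)$, best organized through the logarithmic derivatives. The second identity comes first and is easy. Since $b_r\partial_{b_r}(1+b_rz_i)=b_rz_i=z_i\partial_{z_i}(1+b_rz_i)$, one has
\[
E_bK=K\sum_{r,i}\frac{b_rz_i}{1+b_rz_i}=E_zK.
\]

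For the first identity, I would write $K=\prod_r\phi(b_r)$, where $\phi(u)=\prod_{i=1}^m(1+uz_i)$, and compute $\mathcal D_q^{(b)}K/K$ term by term. Put
\[
g_r=b_r\partial_{b_r}\log K=\sum_i\frac{b_rz_i}{1+b_rz_i}.
\]
Then
\[
\frac{b_r^2\partial_{b_r}^2K}{K}=g_r^2-g_r-\sum_i\frac{b_r^2z_i^2}{(1+b_rz_i)^2},
\]
and the pair term is
\[
\frac{2(b_rg_r-b_tg_t)}{b_r-b_t}=2\sum_i\frac{(b_r^2-b_t^2)z_i+b_rb_t(b_r-b_t)z_i^2}{(b_r-b_t)(1+b_rz_i)(1+b_tz_i)}=2\sum_i\frac{(b_r+b_t)z_i+b_rb_tz_i^2}{(1+b_rz_i)(1+b_tz_i)}.
\]
The numerator here equals $(1+b_rz_i)(1+b_tz_i)-1$. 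So the $(r,t)$ pair contributes
\[
2\sum_i\left(1-\frac{1}{(1+b_rz_i)(1+b_tz_i)}\right).
\]
The $z$-side is handled the same way, with the roles of $b$ and $z$ exchanged through the symmetric kernel $1+b_rz_i$.

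Next I would introduce $p_{ri}=\frac{b_rz_i}{1+b_rz_i}$ and $\bar p_{ri}=1-p_{ri}=\frac1{1+b_rz_i}$. Every quantity above is then a polynomial in these variables:
\[
g_r=\sum_ip_{ri},\qquad \sum_i\frac{b_r^2z_i^2}{(1+b_rz_i)^2}=\sum_ip_{ri}^2,
\]
and the $(r,t)$ pair contributes $2\sum_i(1-\bar p_{ri}\bar p_{ti})$. Write $P_r=\sum_ip_{ri}$ for the row sums and $Q_i=\sum_rp_{ri}$ for the column sums. This gives
\[
\frac{\mathcal D^{(b)}_qK}{K}=-\sum_r\Bigl(P_r^2-P_r-\sum_ip_{ri}^2\Bigr)+2\sum_{r<t}\sum_i\bigl(p_{ri}+p_{ti}-p_{ri}p_{ti}\bigr).
\]
Using $2\sum_{r<t}p_{ri}p_{ti}=Q_i^2-\sum_rp_{ri}^2$ and $\sum_{r<t}(p_{ri}+p_{ti})=(q-1)Q_i$, this becomes
\[
-\sum_rP_r^2+\sum_iQ_i^2+2\sum_{r,i}p_{ri}^2+(2q-1)E,
\]
where $E=\sum_{r,i}p_{ri}=E_bK/K$. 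By the symmetric roles, $\mathcal D^{(z)}_mK/K$ equals the same expression with $P$ and $Q$ swapped and $q$ replaced by $m$. Subtracting, the $P^2$ terms and the $Q^2$ terms cancel in pairs, as do the $2\sum p_{ri}^2$ terms, and the difference is $(2q-1-(2m-1))E=2(q-m)E$.

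The main obstacle is this bookkeeping: the pair term must be rewritten as $2\sum_i(1-\bar p_{ri}\bar p_{ti})$, and the resulting $1$'s must combine into the $(q-1)Q_i$ contribution, with the sign conventions in \eqref{ct:operator} checked carefully. For $q=1$ or $m=1$ the pair sums are empty, and the same formulas still hold. Since everything is an identity of rational functions and both sides are polynomials, it holds in $\mathbb Q[b,z]$.
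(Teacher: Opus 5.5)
Your strategy matches the paper's proof. Your $p_{ri}$ is its $u_{ri}$, and your reduction of the pair term to $2\sum_i(p_{ri}+p_{ti}-p_{ri}p_{ti})$ is its divided-difference identity. The row and column sums just repackage $R$ and $C$, via $\sum_rP_r^2=2R+\sum_{r,i}p_{ri}^2$ and $\sum_iQ_i^2=2C+\sum_{r,i}p_{ri}^2$. However, two steps of the computation are wrong as written.

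\textbf{First slip (harmless).} The correct value is $b_r^2\partial_{b_r}^2K/K=\sum_{i\neq j}p_{ri}p_{rj}=g_r^2-\sum_ip_{ri}^2$, with no $-g_r$ term. For $m=1$, $K$ is linear in $b_r$, yet your formula gives $-p_{r1}\neq0$. This slip adds the same $E$ to both the $b$-side and the $z$-side, so it does not affect the difference. Still, the coefficient of $E$ in $\mathcal D_q^{(b)}K/K$ is $2(q-1)$, not $2q-1$.

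\textbf{Second slip (fatal as written).} Since $-2\sum_{r<t}p_{ri}p_{ti}=-Q_i^2+\sum_rp_{ri}^2$, the term $\sum_iQ_i^2$ must enter with a minus sign. With your displayed $+\sum_iQ_i^2$, the $P^2$ and $Q^2$ terms change sign when the two alphabets are exchanged. Subtracting the $z$-side expression then leaves $-2\sum_rP_r^2+2\sum_iQ_i^2+2(q-m)E$. The quadratic terms double rather than cancel, so your concluding step does not follow from what you displayed.

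After both corrections you get
\[
\frac{\mathcal D_q^{(b)}K}{K}=-\sum_rP_r^2-\sum_iQ_i^2+2\sum_{r,i}p_{ri}^2+2(q-1)E=-2R-2C+2(q-1)S,
\]
which is exactly the paper's formula for the $b$-side action. Its quadratic part is symmetric under exchanging $b$ and $z$. Hence the $z$-side value differs only in the term $2(m-1)E$, and the subtraction gives $2(q-m)E$. The gap is therefore a local algebra error, not a missing idea, and once fixed your proof coincides with the paper's.
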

\begin{proof}
Work in $\mathbb Q(b_1,\ldots,b_q,z_1,\ldots,z_m)$.
Separate symmetry in the two alphabets ensures that the final
identity is polynomial.  Set
\[
 u_{ri}=\frac{b_rz_i}{1+b_rz_i},\qquad
 S=\sum_{r,i}u_{ri},\qquad
 R=\sum_{r=1}^q\sum_{i<j}u_{ri}u_{rj},\qquad
 C=\sum_{i=1}^m\sum_{r<t}u_{ri}u_{ti}.
\]
The product rule first gives
\[
 \frac{b_r\partial_{b_r}K(b,z)}{K(b,z)}=\sum_{i=1}^m u_{ri},
 \qquad
 \frac{z_i\partial_{z_i}K(b,z)}{K(b,z)}=\sum_{r=1}^q u_{ri}.
\]
Summing these formulas yields $E_bK(b,z)/K(b,z)=E_zK(b,z)/K(b,z)=S$.
A second derivative of $K(b,z)$ in any one variable differentiates two
distinct linear factors, each unordered pair occurring twice.  Hence
\begin{equation}\label{ct:kernel-second-derivatives}
\frac{1}{K(b,z)}\sum_{r=1}^q b_r^2\partial_{b_r}^2K(b,z)=2R,
 \qquad
 \frac{1}{K(b,z)}\sum_{i=1}^m z_i^2\partial_{z_i}^2K(b,z)=2C.
\end{equation}
Explicitly,
\[
 \frac{b_r^2\partial_{b_r}^2K(b,z)}{K(b,z)}
 =\sum_{\substack{1\leq i,j\leq m\\i\ne j}}
    \frac{b_rz_i}{1+b_rz_i}\frac{b_rz_j}{1+b_rz_j}
 =2\sum_{1\leq i<j\leq m}u_{ri}u_{rj}.
\]
The second formula follows by the same product rule with a fixed
$z_i$ and two distinct indices from the $b$ alphabet.

For the first-order divided difference, the product rule gives
\[
 \frac{(b_r^2\partial_{b_r}-b_t^2\partial_{b_t})K(b,z)}
      {(b_r-b_t)K(b,z)}
 =\sum_{i=1}^m
   \frac{b_r^2z_i/(1+b_rz_i)-b_t^2z_i/(1+b_tz_i)}
        {b_r-b_t}.
\]
For $r<t$ and each $i$, bringing the two fractions on the right
over a common denominator gives
\begin{align}
\frac{b_r^2z_i/(1+b_rz_i)-b_t^2z_i/(1+b_tz_i)}{b_r-b_t}=\frac{z_i(b_r+b_t+b_rb_tz_i)}{(1+b_rz_i)(1+b_tz_i)}
=u_{ri}+u_{ti}-u_{ri}u_{ti}.
\label{ct:kernel-pair}
\end{align}
Therefore, we have
\begin{align*}
2\sum_{1\leq r<t\leq q}\frac{(b_r^2\partial_{b_r}-b_t^2\partial_{b_t})K(b,z)}{(b_r-b_t)K(b,z)}
=2\sum_{1\leq r<t\leq q}\sum_{i=1}^m(u_{ri}+u_{ti}-u_{ri}u_{ti})=2(q-1)S-2C
\end{align*}
Each $u_{ri}$ occurs in $q-1$ pairs of alphabet indices.  Therefore
\eqref{ct:kernel-second-derivatives} and \eqref{ct:kernel-pair} give
\begin{equation}\label{ct:kernel-b-action}
 \frac{\mathcal D_q^{(b)}K(b,z)}{K(b,z)}
 =-2R+2(q-1)S-2C.
\end{equation}
Interchanging the two alphabets gives the separate identity
\begin{equation}\label{ct:kernel-z-action}
 \frac{\mathcal D_m^{(z)}K(b,z)}{K(b,z)}
 =-2C+2(m-1)S-2R.
\end{equation}
Subtracting cancels $R$ and $C$ and leaves $2(q-m)S$.
Since 
$$S=E_bK(b,z)/K(b,z)=E_zK(b,z)/K(b,z),$$ this proves
\eqref{ct:kernel-intertwining}.  As both sides are polynomials,
the identity also holds after alphabet entries are identified.
\end{proof}

\begin{lemma}\label{ct:adjoint}
Let $\mathbb F$ be a field of characteristic zero, take all
constant terms in $\mathbb F((z_1))\cdots((z_m))$, and let
$g\in\mathbb F[z_1,\ldots,z_m]$ be symmetric.  Then
\begin{equation}\label{ct:adjoint-identity}
 T_m(\mathcal D_mg)=B\,T_m(g)=m(m-1)T_m(g).
\end{equation}
In particular, one may take $\mathbb F=\mathbb Q(b)$, so the
identity applies when the coefficients of $g$ are polynomials
or rational functions in an auxiliary alphabet.
\end{lemma}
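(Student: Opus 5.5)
The plan is to prove \eqref{ct:adjoint-identity} by formal integration by parts, moving $\mathcal D_m$ off $g$ and onto the weight $\Delta_m(z)^{-2}$. By \eqref{eq:formal-ct-residue} I will write
\[
T_m(h)=\Res_z\,h\,\Omega\;\dd z_1\cdots\dd z_m,\qquad \Omega=\frac{1}{z_1\cdots z_m\,\Delta_m(z)^2},
\]
with $\Omega$ a fixed element of $\mathbb F((z_1))\cdots((z_m))$. The first equality in \eqref{eq:formal-exact-residue} gives $\Res_z(\partial_iH)=0$ for every $H$ in the working field. Since $\partial_i$ is a derivation there, this yields the adjunction rule
\[
\Res_z\,(\partial_i h)\,H=-\Res_z\,h\,(\partial_iH).
\]
Applying it once or twice moves each part of $\mathcal D_m$ onto $\Omega$. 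The second-order part gives $\Res(\Omega\,z_i^2\partial_i^2g)=\Res(g\,\partial_i^2(z_i^2\Omega))$. Each divided-difference summand gives
\[
\Res\Bigl(\Omega\,\frac{z_i^2\partial_ig-z_j^2\partial_jg}{z_i-z_j}\Bigr)
=-\Res\Bigl(g\Bigl[\partial_i\frac{z_i^2\Omega}{z_i-z_j}-\partial_j\frac{z_j^2\Omega}{z_i-z_j}\Bigr]\Bigr).
\]
Here the factor $(z_i-z_j)^{-1}$ is simply an element of the field (expanded in powers of $z_j/z_i$), so no divisibility is needed at this stage. These manipulations give $T_m(\mathcal D_mg)=\Res_z\,g\,\mathcal D_m^*\Omega$ for an explicit rational function $\mathcal D_m^*\Omega$.

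The key step is to compute $\mathcal D_m^*\Omega$ as a rational function, using logarithmic derivatives. Write $\partial_i\Omega=\Omega\bigl(-z_i^{-1}-2\sum_{j\ne i}(z_i-z_j)^{-1}\bigr)$ and differentiate again. I expect the result to have the form $\mathcal D_m^*\Omega=B\,\Omega+\Omega\cdot\Phi$. The correction $\Phi$ should be a sum of terms like $\frac{z_i z_j}{(z_i-z_j)^2}$ and three-index terms $\frac{z_i^2}{(z_i-z_j)(z_i-z_k)}$. The three-index terms should cancel or combine by the classical identity that $\sum$ over cyclic rotations of $\frac{z_i^2}{(z_i-z_j)(z_i-z_k)}$ equals $1$; this is exactly the mechanism that produces constants such as $\binom m2$ and $\binom m3$. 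The hope is that, after these rational simplifications, $\Phi$ is a constant, so that $\mathcal D_m^*\Omega=B\Omega$ holds identically. Then $T_m(\mathcal D_mg)=B\,T_m(g)$ follows for every $g$. A useful consistency check is the case $m=2$ with $g=1$: there $\mathcal D_2 1=0$, so the identity forces $T_2(1)=0$. This is true, since $1/(z_1-z_2)^2$ has no constant term in this field.

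The main obstacle is that $\Phi$ might not vanish identically, and might only be antisymmetric, or at least not symmetric. In a nonsymmetric iterated Laurent field one cannot simply symmetrize the constant term. If that happens, I would use the symmetry of $g$ in the weaker form that is actually needed. I would pair the divided-difference term for $(i,j)$ with the derivative terms, and show that each leftover has the form $\partial_i(\cdots)$ or $\theta_i(\cdots)$ applied to $g\Omega$ times a polynomial. Those leftovers then vanish by \eqref{eq:formal-exact-residue}. Symmetry of $g$ enters only through the polynomiality of $(z_i^2\partial_ig-z_j^2\partial_jg)/(z_i-z_j)$, which guarantees that every intermediate quantity is a polynomial times $\Omega$. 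Finally, every step is coefficientwise in $\mathbb F$, so taking $\mathbb F=\mathbb Q(b)$ gives the version with an auxiliary alphabet.
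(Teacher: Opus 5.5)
Your strategy is the paper's: formal integration by parts in the iterated Laurent field, moving $\mathcal D_m$ off $g$ and onto the weight. Your setup is correct, including the residue adjunction rule, the adjoints of $z_i^2\partial_i^2$ and of the divided differences, and the $m=2$ check. But the whole content of the lemma is the identity $\mathcal D_m^*\Omega=B\,\Omega$, and you only announce it as a hope, with a fallback too vague to verify. As written, the proof stops at the decisive step. No new idea is needed to close it, only the computation, and the identity holds exactly (in your notation $\Phi=0$).

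\textbf{The missing computation.} Group the divided differences by index rather than by pair:
\[
2\sum_{i<j}\frac{z_i^2\partial_i-z_j^2\partial_j}{z_i-z_j}=2\sum_{i=1}^m z_iA_i\partial_i,\qquad A_i=\sum_{j\ne i}\frac{z_i}{z_i-z_j}.
\]
Then $\mathcal D_m^*\Omega=\sum_i\bigl(-\partial_i^2(z_i^2\Omega)-2\partial_i(z_iA_i\Omega)\bigr)$.

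Put $W=\Delta_m(z)^{-2}$, $\pi=z_1\cdots z_m$ and $\theta_i=z_i\partial_i$, so that $\Omega=W/\pi$. Since $z_i^2\Omega=z_iW/\prod_{k\ne i}z_k$, the $i$th summand equals $\pi^{-1}\bigl(-\theta_i^2W-\theta_iW-2\theta_i(A_iW)\bigr)$.

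Logarithmic differentiation gives $\theta_iW=-2A_iW$, whichever order $i,j$ have. Hence $\theta_i^2W=(4A_i^2-2\theta_iA_i)W$ and $\theta_i(A_iW)=(\theta_iA_i-2A_i^2)W$. The $A_i^2$ terms and the $\theta_iA_i$ terms cancel for each fixed $i$, leaving $2A_i\Omega$.

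The pair identity $\frac{z_i}{z_i-z_j}+\frac{z_j}{z_j-z_i}=1$ gives $\sum_iA_i=\binom m2$. Therefore $\mathcal D_m^*\Omega=m(m-1)\Omega$ identically, and $T_m(\mathcal D_mg)=B\,T_m(g)$. This is the paper's computation, which works with $\CT$ and $\theta_i$ acting directly on $W$ instead of residues and $\partial_i$ acting on $\Omega$.

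\textbf{How this differs from what you predicted.} The three-index terms $z_i^2/((z_i-z_j)(z_i-z_k))$ sit inside $A_i^2$, and the terms $z_iz_j/(z_i-z_j)^2$ sit inside $\theta_iA_i$. Both cancel identically, so no cyclic three-term identity and no $\binom m3$ ever enters. Your fallback paragraph is therefore unnecessary. Symmetry of $g$ is not used at all, except to make $\mathcal D_mg$ a polynomial.
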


\begin{proof}
We use only differentiation and coefficient extraction in the
field stated in the lemma.  Write
\[
 W=\Delta_m(z)^{-2},\qquad
 \theta_i=z_i\partial_{z_i},\qquad
 A_i=\sum_{j\ne i}\frac{z_i}{z_i-z_j}.
\]
Formal differentiation in the fixed iterated Laurent-series field
satisfies the product rule and agrees with rational differentiation
on its rational-function subfield.  Logarithmic differentiation of
$W$ gives
\begin{equation}\label{ct:weight-derivative}
 \theta_iW=-2A_iW.
\end{equation}
Indeed, the contribution of the factor with the other endpoint
$j$ is $-2z_i/(z_i-z_j)$ after division by $W$, regardless of
whether $j<i$ or $j>i$.

On every Laurent monomial one has
$z_i^2\partial_{z_i}^2=\theta_i^2-\theta_i$.  Also,
\[
 \frac{z_i^2\partial_{z_i}-z_j^2\partial_{z_j}}{z_i-z_j}
 =\frac{z_i}{z_i-z_j}\theta_i
  +\frac{z_j}{z_j-z_i}\theta_j.
\]
Consequently, when the divided differences are summed over all
unordered pairs, the coefficient of $\theta_i$ is $A_i$.
Thus \eqref{ct:operator} takes the form
\begin{equation}\label{ct:theta-operator}
D_m=-\sum_{i=1}^m\theta_i^2+\sum_{i=1}^m\theta_i+2\sum_{i=1}^mA_i\theta_i.
\end{equation}

For every $H$ in the fixed field, $\CT_z(\theta_iH)=0$, since
$\theta_i$ multiplies the coefficient of
$z_1^{\beta_1}\cdots z_m^{\beta_m}$ by $\beta_i$.
Applying the product rule gives
\[
 0=\CT_z\bigl(\theta_i(Hg)\bigr)
  =\CT_z\bigl((\theta_iH)g\bigr)
   +\CT_z\bigl(H\theta_i g\bigr).
\]
Equivalently,
\[
 \CT_z(H\theta_i g)=-\CT_z(g\theta_iH).
\]
Applied twice, it gives
\[
 \CT_z(W\theta_i^2g)
 =-\CT_z\bigl((\theta_iW)\theta_i g\bigr)
 =\CT_z\bigl(g\theta_i^2W\bigr).
\]
For the other two terms they give
\[
 \CT_z(W\theta_i g)=-\CT_z(g\theta_iW),
 \qquad
 \CT_z(WA_i\theta_i g)=-\CT_z\bigl(g\theta_i(WA_i)\bigr).
\]
Substitution into \eqref{ct:theta-operator}, with each sign
retained, therefore gives
\begin{equation}\label{ct:adjoint-before-cancellation}
 T_m(\mathcal D_mg)
 =\CT_z g\sum_{i=1}^m
 \bigl(-\theta_i^2W-\theta_iW-2\theta_i(WA_i)\bigr).
\end{equation}
The second-order terms cancel.  Equation~\eqref{ct:weight-derivative} implies
\[
 \theta_i^2W=(4A_i^2-2\theta_iA_i)W,
 \qquad
 \theta_i(WA_i)=(\theta_iA_i-2A_i^2)W.
\]
Therefore
\begin{align*}
-\theta_i^2W-\theta_iW-2\theta_i(WA_i)=\bigl(-4A_i^2+2\theta_iA_i+2A_i-2\theta_iA_i+4A_i^2\bigr)W=2A_iW.
\end{align*}
For every unordered pair $\{i,j\}$,
\[
 \frac{z_i}{z_i-z_j}+\frac{z_j}{z_j-z_i}=1.
\]
It follows that $\sum_{i=1}^mA_i=\binom m2$, and substitution in
\eqref{ct:adjoint-before-cancellation} yields
\[
 T_m(\mathcal D_mg)=2\binom m2\CT_z Wg=m(m-1)\,T_m(g).
\]
This proves the claim entirely within the fixed iterated Laurent-series field.
\end{proof}

\begin{proposition}\label{ct:eigenfunction}
As an identity in $\mathbb Q[b_1,\ldots,b_q]$, the polynomial
$F_{m,q}$ satisfies
\begin{equation}\label{ct:eigen-equation}
 \mathcal D_qF_{m,q}=(2q-2m+1)B\,F_{m,q}.
\end{equation}
This scalar is the diagonal eigenvalue
\begin{equation}\label{ct:eigenvalue-general}
 \varepsilon_\lambda
 =\sum_{r=1}^q
 \bigl[-\lambda_r(\lambda_r-1)+2(q-r)\lambda_r\bigr]
\end{equation}
associated with the partition $\lambda$ in
\eqref{ct:partitions}.
\end{proposition}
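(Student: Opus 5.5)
Apply $\mathcal D_q^{(b)}$ to $F_{m,q}$ and move it inside the constant term, since $T_m$ acts coefficientwise in the $b$ alphabet. By the definition \eqref{ct:universal-definition} and linearity,
\[
\mathcal D_q F_{m,q}=T_m\bigl(\mathcal D_q^{(b)}K(b,z)\bigr).
\]
This step needs justification. The operator $\mathcal D_q^{(b)}$ involves only $b$-derivatives and divided differences in $b$, so it commutes with $z$-coefficient extraction in $\mathbb Q(b)((z_1))\cdots((z_m))$. The divided differences are harmless because $K$ is symmetric in $b$ and polynomial, and extraction preserves that symmetry.

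Next use \cref{ct:kernel-identity} to replace $\mathcal D_q^{(b)}K$ by $\mathcal D_m^{(z)}K+2(q-m)E_zK$. The polynomial $K$ is symmetric in $z$ with coefficients in $\mathbb Q[b]$, so \cref{ct:adjoint} with $\mathbb F=\mathbb Q(b)$ gives $T_m(\mathcal D_m^{(z)}K)=B\,T_m(K)=B F_{m,q}$. For the Euler term, write $E_z=\sum_i\theta_i$ and use the integration by parts $\CT_z(W\theta_iK)=-\CT_z(K\theta_iW)$ together with \eqref{ct:weight-derivative}:
\[
T_m(E_zK)=2\,\CT_z\Bigl(K\,W\sum_i A_i\Bigr)=2\binom m2T_m(K)=BF_{m,q}.
\]
A simpler route is homogeneity: $W$ has total $z$-degree $-B$, so at a constant term each contributing numerator monomial has $z$-degree exactly $B$, and $E_z$ multiplies it by $B$. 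Combining gives $\mathcal D_qF_{m,q}=(B+2(q-m)B)F_{m,q}=(2q-2m+1)BF_{m,q}$.

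Finally, check that this scalar equals $\varepsilon_\lambda$ from \eqref{st:eigenvalue} for $\lambda=(m-1,m-1,\ldots,1,1,0,\ldots)$. The quickest way is the diagram form \eqref{st:eigenvalue-diagram}: $\varepsilon_\lambda=2\sum_{[\lambda]}(q+1-i-j)=2(q+1)B-2\sum_{[\lambda]}(i+j)$. It then suffices to show $\sum_{[\lambda]}(i+j)=B(2m-1)/2\cdot\ldots$, or more precisely that $2\sum_{[\lambda]}(i+j)=(2m+1)B$. This can be computed directly by summing $\sum_i(i\lambda_i+\lambda_i(\lambda_i+1)/2)$ over the pairs of rows $2s-1,2s$ with $\lambda=m-s$, which yields $\sum_{s}(m-s)(4s-1)+(m-s)(m-s+1)$ over $s=1,\ldots,m-1$, and simplifying to $(2m+1)m(m-1)/2$. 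Then $\varepsilon_\lambda=2(q+1)B-(2m+1)B=(2q-2m+1)B$.

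The main obstacle is the first step, namely rigorously commuting $\mathcal D_q^{(b)}$, with its divided differences, past $\CT_z$. I would handle it by noting that $F_{m,q}$ is the finite sum \eqref{ct:finite-formula}. Equivalently, apply $\mathcal D_q^{(b)}$ to the finite polynomial truncation of $K$ obtained by retaining only the monomials relevant for the contributing flows. Since $\mathcal D_q^{(b)}$ preserves $z$-monomials, it acts $z$-monomialwise on $K=\sum_d(\prod_ie_{d_i}(b))z^d$, and each coefficient $\prod_i e_{d_i}(b)$ is symmetric in $b$. Hence it commutes with extraction of any $z$-coefficient.
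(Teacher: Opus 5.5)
Your proof is correct and follows essentially the paper's route: commute $\mathcal D_q^{(b)}$ past $T_m$ (justified by linearity over the finite $z$-expansion of $K$), apply \cref{ct:kernel-identity}, and use \cref{ct:adjoint} to get $T_m(\mathcal D_m^{(z)}K)=BF_{m,q}$. The differences are cosmetic: you evaluate the Euler term as $T_m(E_zK)=BF_{m,q}$ via $z$-homogeneity of $\Delta_m(z)^{-2}$ (or integration by parts), where the paper commutes $E_b$ through $T_m$ and uses $b$-homogeneity of $F_{m,q}$. You also check $\varepsilon_\lambda=(2q-2m+1)B$ through the diagram form \eqref{st:eigenvalue-diagram} rather than by summing pairs of rows in \eqref{ct:eigenvalue-general}; note that your intermediate ``$B(2m-1)/2$'' is a slip, which you immediately correct to $2\sum_{[\lambda]}(i+j)=(2m+1)B$.
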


\begin{proof}
Apply $T_m$ to \eqref{ct:kernel-intertwining} over
$\mathbb Q(b)$. That is 
$$T_m\left(\mathcal D_q^{(b)}K(b,z)-\mathcal D_m^{(z)}K(b,z)\right)=T_m\left(2(q-m)E_bK(b,z)\right).$$
Differentiation in $b$ and multiplication by
rational functions of $b$ commute with extraction of $z$ coefficients.
Thus $\mathcal D_q^{(b)}$ and $E_b$ commute with $T_m$.
This also follows directly from \eqref{ct:finite-formula}.
Therefore, we have
$$T_m(\mathcal D_q^{(b)}K(b,z))=\mathcal D_q^{(b)}T_mK(b,z)=\mathcal{D}_q^{(b)} F_{m,q}.$$
Since $K(b,z)$ is symmetric in $z$, by \cref{ct:adjoint}, we have 
$$T_m(\mathcal D_m^{(z)}K(b,z))=B\,T_m(K(b,z))=BF_{m,q}.$$
\cref{ct:finite-expansion} gives 
$$T_m(E_b K(b,z))=E_b(T_m(K(b,z)))= E_bF_{m,q}=BF_{m,q}.$$
Consequently,
\[\begin{aligned}
 \mathcal D_q^{(b)}F_{m,q}
 &=T_m(\mathcal D_m^{(z)}K(b,z))+2(q-m)T_m(E_bK(b,z))=BF_{m,q}+2(q-m)E_bF_{m,q}\\
 &=(2q-2m+1)B\,F_{m,q}.
 \end{aligned}
\]
We now compute the diagonal eigenvalue for the stated partition.
For $m=1$, $\lambda$ is zero and both eigenvalue expressions
are zero.  For $m\geq2$, in rows $2j-1$ and $2j$ of $\lambda$ the common
part is $k=m-j$, where $1\leq j\leq m-1$.  Their total
contribution to \eqref{ct:eigenvalue-general} is
\[
 \begin{aligned}
 -2k(k-1)+\bigl[2(q-2j+1)+2(q-2j)\bigr]k
 &=6k^2+(4q-8m+4)k.
 \end{aligned}
\]
All remaining parts of $\lambda$ vanish.  Using
$\sum_{k=1}^{m-1}k=m(m-1)/2$ and
$\sum_{k=1}^{m-1}k^2=m(m-1)(2m-1)/6$ therefore gives
\[
 \begin{aligned}
 \varepsilon_\lambda
 &=6\frac{m(m-1)(2m-1)}6
   +(4q-8m+4)\frac{m(m-1)}2\\
 &=(2q-2m+1)m(m-1),
 \end{aligned}
\]
as required.
\end{proof}

\begin{theorem}\label{ct:universal-stability}
For every $m\geq1$ and $q\geq\max(1,2m-2)$, the polynomial
$F_{m,q}\in\mathbb Z[b_1,\ldots,b_q]$ is real stable.  In
other words, it is nonzero whenever all $b_r$ have positive imaginary parts.
\end{theorem}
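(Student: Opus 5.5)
The proof will apply the final assertion of \cref{st:stable-eigenfunction} to $F_{m,q}$ with the partition $\lambda=(m-1,m-1,m-2,m-2,\ldots,1,1)$ of \eqref{ct:partitions}, padded with zeros to length $q$. That assertion requires three inputs: membership in $\mathcal V_\lambda$, a nonzero $m_\lambda$ coefficient, and the eigenvalue equation $\mathcal D_qP=\varepsilon_\lambda P$, together with the hypotheses that $\lambda$ has length at most $q$ and satisfies \eqref{st:adjacent-drop}. The section has been arranged so that each of these is already available.

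First, the hypotheses on $\lambda$. The positive parts of $\lambda$ number $2m-2\leq q$, so $\lambda$ has length at most $q$. Consecutive parts differ by $0$ or $1$: inside each pair $m-j,m-j$ the drop is $0$; between pairs the drop is $1$; and the last positive part $1$ drops to a trailing zero by $1$. Hence \eqref{st:adjacent-drop} holds. For $m=1$, $\lambda$ is the zero partition and $F_{1,q}=1$ is trivially real stable, so we may assume $m\geq2$.

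Second, the membership and the leading coefficient. By \cref{ct:dominance}, $F_{m,q}=m_\lambda+\sum_{\nu\prec\lambda}c_\nu m_\nu$. Every $\nu$ occurring here is a partition of $B$ of length at most $q$, since there are only $q$ variables. Hence $F_{m,q}\in\mathcal V_\lambda$, and its $m_\lambda$ coefficient equals $1$. \cref{ct:finite-expansion} shows that the coefficients are real integers.

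Third, the eigenvalue equation. \cref{ct:eigenfunction} gives $\mathcal D_qF_{m,q}=(2q-2m+1)BF_{m,q}$ and identifies this scalar with \eqref{ct:eigenvalue-general}. This expression must be matched with the $\varepsilon_\lambda$ of \eqref{st:eigenvalue}, namely $\sum_r\lambda_r(2(q-r)-\lambda_r+1)$. The two agree term by term, since $-\lambda_r(\lambda_r-1)+2(q-r)\lambda_r=\lambda_r(2(q-r)-\lambda_r+1)$. One must also check that the operator in \eqref{ct:operator} is the operator $\mathcal B_q-\mathcal A_q$ of \eqref{st:operators}, which is immediate from the definitions. \cref{st:stable-eigenfunction} then yields that $F_{m,q}$ equals $P_\lambda$ and is real stable. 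Equivalently, since its coefficients are real, $F_{m,q}$ does not vanish on $\mathbb H^q$.

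The substantive work was done earlier, in the eigenvalue identity of \cref{ct:eigenfunction} and the stability of the eigenfunction. The only point needing care here is bookkeeping. The operators and eigenvalue normalizations of the two sections must coincide, and the requirement $q\geq 2m-2$ is exactly what guarantees both that the length condition holds and that \cref{ct:dominance} applies with every $a_i\leq q$.
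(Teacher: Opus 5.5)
Your proposal is correct and follows the paper's proof. You check that $\lambda$ has at most $q$ parts with adjacent drops at most one, and use \cref{ct:dominance} and \cref{ct:eigenfunction} to show that $F_{m,q}$ has the monic triangular form and eigenvalue required in \cref{st:stable-eigenfunction}; uniqueness there (equivalently, its final assertion) identifies $F_{m,q}$ with the real stable $P_\lambda$. Your explicit checks that \eqref{ct:operator} coincides with $\mathcal B_q-\mathcal A_q$ and that \eqref{ct:eigenvalue-general} agrees with \eqref{st:eigenvalue} are bookkeeping the paper leaves implicit.
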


\begin{proof}
For $m=1$, $F_{1,q}=1$.  For $m\geq2$, the partition
$\lambda$ has at most $q$ parts and every adjacent difference
is zero or one.  Hence \cref{st:stable-eigenfunction} applies.
That theorem, for exactly the operator \eqref{ct:operator},
gives a unique symmetric polynomial of the form
\[
 P_\lambda=m_\lambda+\sum_{\nu\prec\lambda}p_\nu m_\nu
 \quad\hbox{such that}\quad
 \mathcal D_qP_\lambda=\varepsilon_\lambda P_\lambda.
\]
It also proves that $P_\lambda$ is real stable. \cref{ct:dominance} and \cref{ct:eigenfunction}
show that $F_{m,q}$ has this form and satisfies this equation.
Uniqueness gives $F_{m,q}=P_\lambda$, and hence stability.
\end{proof}

\subsection{The two-variable specialization}

We now return to two variables.  In the coefficient field
$\mathbb Q(y,w)$, with the same ordering of the $z$ variables,
define
\begin{equation}\label{ct:homogeneous-numerator}
\mathcal H_m(y,w)=\CT_z\frac{\prod_{i=1}^m(z_i+y)^m(z_i+w)^m}{\Delta_m(z)^2}.
\end{equation}
The argument in \cref{ct:finite-expansion} shows that $\mathcal H_m(y,w)$ is a polynomial with nonnegative integer coefficients.
Every numerator monomial has total degree $2m^2$ in all of $z,y,w$.  Every monomial in the fixed
expansion of $\Delta_m(z)^{-2}$ has total $z$-degree $(-B)$.
Consequently a numerator monomial can survive the $z$ constant
term only when its total $z$-degree is $B$.  Its remaining degree
in $y,w$ is then $2m^2-B=m(m+1)=L$.  This proves that
$\mathcal H_m(y,w)$ is homogeneous of degree $L$.  By
\eqref{eq:positive-ct},
\[
 \mathcal H_m(y,1)=h_{m+2}^*(y).
\]

\begin{proposition}\label{ct:specialization}
The following identity holds in $\mathbb Z[y,w]$:
\begin{equation}\label{ct:specialization-identity}
 \mathcal H_m(y,w)
 =(yw)^mF_{m,2m}(
 \underbrace{y,\ldots,y}_{m\text{ times}},
 \underbrace{w,\ldots,w}_{m\text{ times}}).
\end{equation}
Consequently, $\mathcal H_m(y,w)$ is real stable.
\end{proposition}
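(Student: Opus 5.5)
The plan is to compare the two constant terms directly at the level of their numerators, and then get stability from \cref{ct:universal-stability} by a specialization argument.

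\textbf{Step 1: rewrite the kernel.} Take $q=2m$ in \eqref{ct:universal-definition}, which is allowed since $2m\geq\max(1,2m-2)$. Substitute $b_1=\cdots=b_m=y$ and $b_{m+1}=\cdots=b_{2m}=w$. Since specialization of the auxiliary parameters commutes with extracting $z$-coefficients (this is visible from the finite formula \eqref{ct:finite-formula}), we get
\[
F_{m,2m}(y,\ldots,y,w,\ldots,w)=\CT_z\frac{\prod_{i=1}^m(1+yz_i)^m(1+wz_i)^m}{\Delta_m(z)^2}.
\]
So the identity comes down to relating $\prod_i(1+yz_i)^m(1+wz_i)^m$ with $\prod_i(z_i+y)^m(z_i+w)^m$. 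These are exchanged by the inversion $z_i\mapsto z_i^{-1}$ together with the factor $\prod_i z_i^{2m}$.

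\textbf{Step 2: identify the coefficients that survive.} Because $T_m$ is a finite sum over flows, I would avoid an actual change of variables in the Laurent field. Instead I compare coefficients. Expand $\prod_i(z_i+y)^m(z_i+w)^m$ as $\sum_\alpha c_\alpha(y,w)z^\alpha$ with $0\leq\alpha_i\leq 2m$. Then
\[
\prod_i(1+yz_i)^m(1+wz_i)^m=\sum_\alpha c_\alpha(y,w)\,z^{\bar\alpha},\qquad \bar\alpha_i=2m-\alpha_i.
\]
Only monomials of total $z$-degree $B$ survive the constant term. On those, the homogeneity computation preceding the proposition shows that $c_\alpha$ is homogeneous of degree $L$, while the corresponding coefficient of $z^{\bar\alpha}$ must be homogeneous of degree $B$. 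This is consistent, because $L-B=2m$ is exactly the degree of $(yw)^m$.

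\textbf{Step 3: compare the two constant terms.} What remains is to show
\[
\sum_\alpha c_\alpha T_m(z^\alpha)=(yw)^m\sum_\alpha c_\alpha\, y^{-m}w^{-m}\cdots,
\]
that is, the precise coefficient relation. The cleaner route is the reciprocity $c_\alpha(y,w)=(yw)^m\,\tilde c_{\bar\alpha}(y^{-1},w^{-1})$ combined with the symmetry $T_m(z^\alpha)=T_m(z^{\alpha^{\mathrm{rev}}})$ for reversed exponent vectors. Here $\tilde c$ denotes the coefficients of the $(1+yz)(1+wz)$ product, and the reversal comes from relabeling $z_i\leftrightarrow z_{m+1-i}$ together with inversion, which maps the expansion field to itself. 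This is where I expect the main obstacle.

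Concretely, for the weight $\Delta_m(z)^{-2}$ I would prove $T_m(z^\alpha)=T_m(z^{\beta})$ with $\beta_i=2(m-1)-\alpha_{m+1-i}$. This is a duality statement about the flow counts in \eqref{eq:monomial-balance}. Reversing all edges and relabeling $i\mapsto m+1-i$ sends a flow with netflow data $d$ to one with data $2(m-i)\mapsto$ the complementary value, while preserving the weight $\prod(f_{ij}+1)$. I would verify this bijection explicitly, since it is checkable directly from \eqref{eq:monomial-balance}. Having it, I would pair each surviving monomial $z^\alpha$ of $\prod_i(z_i+y)^m(z_i+w)^m$ with the corresponding monomial of $\prod_i(1+yz_i)^m(1+wz_i)^m$. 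The exponent shift $2m$ versus $2(m-1)$ leaves $z^{2}$ per variable, that is $\prod_i z_i^2$, which is absorbed by the $(yw)^m$ bookkeeping once degrees are tracked. If this bookkeeping fails, the fallback is to compare the two sides as symmetric polynomials via the uniqueness in \cref{st:stable-eigenfunction}, applied after checking leading terms.

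\textbf{Step 4: stability.} $F_{m,2m}$ is real stable by \cref{ct:universal-stability}. Setting variables equal preserves stability: if $y,w\in\mathbb H$, then all the $b_r$ lie in $\mathbb H$, so $F_{m,2m}\neq0$ there. Moreover $(yw)^m\neq0$ on $\mathbb H^2$, and the coefficients are real. Hence $\mathcal H_m(y,w)$ is real stable.
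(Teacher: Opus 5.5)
There is a genuine gap in Step~3, which is where the identity actually has to be proved.

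\textbf{What your route actually gives.} Your Step~2 records that the coefficients of $Q=\prod_i(1+yz_i)^m(1+wz_i)^m$ are those of $P=\prod_i(z_i+y)^m(z_i+w)^m$ at flipped positions, i.e.\ $\tilde c_{\bar\alpha}=c_\alpha$. Your duality $T_m(z^\alpha)=T_m(z^\beta)$ with $\beta_i=2(m-1)-\alpha_{m+1-i}$ is true. It is the substitution $z_i\mapsto z_{m+1-i}^{-1}$, which matches the two flow expansions term by term. But feeding the flip into it, together with the symmetry of $P$, only yields
\[
F_{m,2m}(y,\ldots,y,w,\ldots,w)=T_m\Bigl(P\prod_{i=1}^m z_i^{-2}\Bigr).
\]
This extracts the part of $P$ of $z$-degree $B+2m$, whereas $\mathcal H_m=T_m(P)$ extracts the part of $z$-degree $B$.

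\textbf{Why the proposed closing steps fail.} You assert that the leftover $\prod_i z_i^2$ ``is absorbed by the $(yw)^m$ bookkeeping once degrees are tracked.'' That is precisely the proposition. Degree counting shows only that both sides are homogeneous of degree $L$, not that they are equal.

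The reciprocity you wrote does not help either. Since $\tilde c_{\bar\alpha}=c_\alpha$, it reads $c_\alpha(y,w)=(yw)^m c_\alpha(y^{-1},w^{-1})$, which is false: for $m=1$, $c_{(0)}=yw$ while $(yw)\,c_{(0)}(y^{-1},w^{-1})=1$.

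The fallback via uniqueness in \cref{st:stable-eigenfunction} is also unavailable. That theorem concerns symmetric polynomials in $q$ variables satisfying the $\mathcal D_q$ eigen-equation. You have no such lift of the bivariate $\mathcal H_m$ other than the identity being proved.

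\textbf{The missing idea.} Invert the parameters $y,w$ rather than the variables $z_i$, and then use homogeneity. The correct reciprocity preserves the index and has exponent $m^2$:
\[
(z_i+y)^m(z_i+w)^m=(yw)^m(1+y^{-1}z_i)^m(1+w^{-1}z_i)^m,
\]
so
\[
\mathcal H_m(y,w)=(yw)^{m^2}F_{m,2m}(y^{-1},\ldots,y^{-1},w^{-1},\ldots,w^{-1}).
\]
By \cref{ct:finite-expansion}, $F_{m,2m}$ is homogeneous of degree $B$, and it is symmetric. Writing the alphabet as $\frac{1}{yw}(w,\ldots,w,y,\ldots,y)$ therefore produces the factor $(yw)^{-B}$, and the two blocks can be swapped. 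Since $m^2-B=m$, this finishes the proof.

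This is the paper's proof; it needs neither an inversion of $z$ nor the flow duality. Equivalently, starting from your Step~1, use $Q(z;y,w)=(yw)^{m^2}P(z;y^{-1},w^{-1})$. Then homogeneity of $\mathcal H_m$ of degree $L$ and its $y\leftrightarrow w$ symmetry give $F_{m,2m}(y,\ldots,y,w,\ldots,w)=(yw)^{m^2-L}\mathcal H_m(y,w)=(yw)^{-m}\mathcal H_m(y,w)$.

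Your Step~4 is correct and matches the paper.
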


\begin{proof}
We first establish the identity in the rational-function field
$\mathbb Q(y,w)$.  The alphabet substitutions below are made in
the polynomial $F_{m,2m}$ given by \eqref{ct:finite-formula},
so identifying its entries is legitimate.  Here $y$ and $w$ are
invertible.  For each $i$, factor
\[
 (z_i+y)^m(z_i+w)^m
   =y^mw^m(1+y^{-1}z_i)^m(1+w^{-1}z_i)^m.
\]
The $m$ indices $i$ therefore contribute the common scalar
$(yw)^{m^2}$.  Pulling this scalar through $T_m$ and using
\eqref{ct:universal-definition} gives
\[
 \mathcal H_m(y,w)
 =(yw)^{m^2}F_{m,2m}(
 \underbrace{y^{-1},\ldots,y^{-1}}_m,
 \underbrace{w^{-1},\ldots,w^{-1}}_m).
\]
Now
\[
 (\underbrace{y^{-1},\ldots,y^{-1}}_{m\text{ entries}},
  \underbrace{w^{-1},\ldots,w^{-1}}_{m\text{ entries}})
 =\frac1{yw}
 (\underbrace{w,\ldots,w}_{m\text{ entries}},
  \underbrace{y,\ldots,y}_{m\text{ entries}}).
\]
The homogeneity of $F_{m,2m}$ of degree $B$ thus gives
\[F_{m,2m}(\underbrace{y^{-1},\ldots,y^{-1}}_{m\text{ entries}},\underbrace{w^{-1},\ldots,w^{-1}}_{m\text{ entries}})
=(yw)^{-B}F_{m,2m}(\underbrace{w,\ldots,w}_{m\text{ entries}},\underbrace{y,\ldots,y}_{m\text{ entries}}).
\]
Exchange the two blocks by symmetry.  The scalar becomes
$(yw)^{m^2-B}=(yw)^m$, proving
\eqref{ct:specialization-identity} in $\mathbb Q(y,w)$.
Since both sides belong to $\mathbb Z[y,w]$, the identity holds
in that ring, including at $y=0$ or $w=0$.

If $y,w$ belong to the open upper half-plane, every entry of the
specialized alphabet belongs to that half-plane. \cref{ct:universal-stability} therefore makes the $F$ factor
nonzero.  The monomial $(yw)^m$ is also nonzero there.  This
proves real stability directly from its definition.
\end{proof}

\begin{theorem}\label{ct:negative-roots}
For every $m\geq1$, the reduced polynomial $A_m(y)$ in
\eqref{eq:reduced-definition} has only negative real zeros.  Accordingly, all zeros of $h_{m+2}^*(y)$ are real and nonpositive, and its zero at the origin has multiplicity exactly $N$.
\end{theorem}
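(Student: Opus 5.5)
The plan is to pass from the real stability of the bivariate polynomial $\mathcal H_m(y,w)$ (\cref{ct:specialization}) to real-rootedness of the univariate specialization $h_{m+2}^*(y)=\mathcal H_m(y,1)$, and then to locate the roots using positivity of the coefficients.

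\textbf{Step 1: real-rootedness of $h^*_{m+2}$.} Specializing a real stable polynomial at a real point, $w=1$, gives either zero or a real stable polynomial; this is the standard closure property. To stay within the paper's own tools, I would realize it as a limit: $\mathcal H_m(y,1+\mathrm i\delta)$ is nonzero for $y\in\mathbb H$ and $\delta>0$, hence stable in $y$. As $\delta\to0^+$ these polynomials converge coefficientwise, with bounded degree, to $\mathcal H_m(y,1)=h_{m+2}^*(y)$. By \cref{st:limit-closure} the limit is stable or zero. It is nonzero by \cref{prop:gamma-structure}, and since it has real coefficients it is real-rooted. (The slight subtlety is that the approximants have complex coefficients; \cref{st:limit-closure} only requires stability, so this causes no difficulty.)

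\textbf{Step 2: locating the roots.} \cref{prop:gamma-structure} gives $h_{m+2}^*(y)=y^NA_m(y)$ with $A_m(0)=a_{m,0}=\gamma_{m,0}>0$. Hence the zero at the origin has multiplicity exactly $N$, and $A_m$ is itself real-rooted, since its roots are among those of $h^*_{m+2}$. All coefficients $a_{m,k}$ are positive, so $A_m(y)>0$ for every $y\geq0$. Every real root of $A_m$ is therefore negative, and since all its roots are real, all its zeros are negative real numbers.

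I do not anticipate any step being a real obstacle, since the substantial work is already contained in \cref{ct:universal-stability,ct:specialization}. The only point needing care is justifying the specialization $w=1$ via the limit argument rather than by quoting an external result. Alternatively, one can bypass Step 1's limit by dehomogenizing directly: for real $y_0\notin\mathbb R$ with $\operatorname{Im}y_0>0$, homogeneity gives $\mathcal H_m(y_0,1)=\mathcal H_m(\lambda y_0,\lambda)/\lambda^L$ for complex $\lambda$. One would then need to choose $\lambda$ with both $\lambda y_0$ and $\lambda$ in $\mathbb H$, which is possible whenever $\arg y_0\in(0,\pi)$: take $\arg\lambda$ slightly positive so that $\arg\lambda+\arg y_0<\pi$. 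This argument shows directly that $h_{m+2}^*(y_0)\neq0$ for every nonreal $y_0$, and I would present it as the cleaner route.
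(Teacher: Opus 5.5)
Your proposal is correct and is essentially the paper's proof. The paper cancels $y^Nw^N$ to get the real stable homogenization $A_m(y,w)$, then for a root $\zeta\in\mathbb H$ takes $w=1+\mathrm i\epsilon$ and $y=\zeta w$ to contradict stability — the same dehomogenization you prefer — and then argues negativity and multiplicity $N$ exactly as in your Step 2. Your limit route via \cref{st:limit-closure} is also valid; in the direct argument, the case $\operatorname{Im}y_0<0$ follows by conjugation (real coefficients), and ``real $y_0\notin\mathbb R$'' should read ``$y_0\notin\mathbb R$''.
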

\begin{proof}
\cref{prop:gamma-structure} gives a polynomial
$A_m(y)$ of degree
$2e$, with positive coefficients and positive constant term.
Its \emph{homogenization} to degree $2e$ is the polynomial
\[
 A_m(y,w)=\sum_{k=0}^{2e}a_{m,k}y^kw^{2e-k}.
\]
Thus $A_m(y,1)=A_m(y)$.  Since $L=2N+2e$, the homogeneous
polynomials $\mathcal H_m(y,w)$ and $y^Nw^NA_m(y,w)$ have the
same degree and the same specialization at $w=1$.  Therefore
\begin{equation}\label{ct:cancel-monomial}
 \mathcal H_m(y,w)=y^Nw^NA_m(y,w).
\end{equation}
Indeed, the term $a_{m,k}y^{N+k}$ in $\mathcal H_m(y,1)$
homogenizes to
$a_{m,k}y^{N+k}w^{L-N-k}=y^Nw^Na_{m,k}y^kw^{2e-k}$.

For $y,w$ in the open upper half-plane, the monomial $y^Nw^N$
does not vanish. \cref{ct:specialization} and
\eqref{ct:cancel-monomial} therefore imply that $A_m(y,w)$
is real stable.

Suppose that $A_m(y)=A_m(y,1)$ has a nonreal zero $\zeta$.
Its coefficients are real, so complex conjugation allows us to
assume $\operatorname{Im}\zeta>0$.  Write
$\zeta=u+\mathrm iv$ with $u,v\in\mathbb R$ and $v>0$,
and choose
\[
 0<\epsilon<\frac{v}{2(|u|+1)}.
\]
Then
$\operatorname{Im}(\zeta(1+\mathrm i\epsilon))
 =v+u\epsilon\geq v-|u|\epsilon>v/2>0$.
Thus both
$w=1+\mathrm i\epsilon$ and $y=\zeta w$ lie in the open upper
half-plane, whereas homogeneity gives
\[
A_m(y,w)=A_m(\zeta w,w)=w^{2e}A_m(\zeta,1)=0.
\]
This contradicts stability.  Every zero of $A_m(y)$ is therefore
real.  Its positive coefficients give $A_m(y)>0$ for $y\geq0$, so every zero is negative.

Finally, $h_{m+2}^*(y)=y^NA_m(y)$ and $A_m(0)>0$, proving the
claimed multiplicity and the assertion about its remaining
zeros.  When $m=1$, equivalently $n=3$, the explicit formulas
$h_3^*(y)=y$ and $A_1(y)=1$ from \cref{prop:gamma-structure}
give the same conclusion, with the assertion about zeros of
$A_1$ understood vacuously.
\end{proof}

\subsection{Proof of the conjecture}\label{sec:newton}

It remains to prove the normalized coefficient inequality in \cref{thm:resolution}.  In fact, Newton's inequalities are a corollary of the real-rootedness of polynomials with real coefficients.
A classical reference is Hardy--Littlewood--P\'olya~\cite[\S2.22, Theorem~51, p.~52;\S4.3, Theorem~144, pp.~104--105]{HLP}.
The Newton's inequalities is also stated in Stanley's book \cite[Chapter 5]{RP.StanleyAC}.

\begin{lemma}{\em (Newton's inequalities, \cite[Chapter 5]{RP.StanleyAC})}\label{lem:newton}
Let $P(y)=\sum_{j=0}^d p_jy^j$ have positive real coefficients and only real roots.  For every $1\leq k<d$, we have 
\begin{equation*}
p_k^2\geq\frac{(k+1)(d-k+1)}{k(d-k)}p_{k-1}p_{k+1}.
\end{equation*}
\end{lemma}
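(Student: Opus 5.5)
The plan is to derive the inequalities from the elementary symmetric means of the roots. Because the coefficients are positive, $p_d\neq0$. The polynomial is real-rooted, so every root is real. Since $P(y)>0$ for $y\geq0$, every root is negative, say $-r_1,\ldots,-r_d$ with $r_i>0$. Then
\[
P(y)=p_d\prod_{i=1}^d(y+r_i),\qquad p_j=p_d\,e_{d-j}(r_1,\ldots,r_d).
\]
Put $E_k=e_k(r)/\binom dk$. The target inequality is equivalent, by the binomial ratio identities already recorded before \eqref{eq:ulc-factor}, to the ultra log-concavity $(p_k/\binom dk)^2\geq (p_{k-1}/\binom d{k-1})(p_{k+1}/\binom d{k+1})$. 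Since $\binom dk=\binom d{d-k}$, this is the classical inequality $E_j^2\geq E_{j-1}E_{j+1}$ with $j=d-k$, $1\leq j<d$.

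To prove $E_j^2\geq E_{j-1}E_{j+1}$, I would use the standard reduction by differentiation and reversal, which preserves real-rootedness.
\begin{enumerate}
\item Write $Q(x,y)=\sum_{i=0}^d\binom di E_i x^{d-i}y^i$, the homogenization of $\prod_i(x+r_iy)$. It has only real linear factors, so every univariate dehomogenization is real-rooted.
\item Differentiate $d-j-1$ times in $x$ and $j-1$ times in $y$. By Rolle's theorem, applied to real-rooted polynomials in one variable after dehomogenizing, each derivative keeps only real linear factors. Degree drops are harmless, since a real-rooted polynomial of lower degree just corresponds to roots at infinity.
\item The result is a quadratic form, a positive multiple of $E_{j-1}x^2+2E_jxy+E_{j+1}y^2$.
\item This form has real linear factors, so its discriminant is nonnegative, which gives $E_j^2\geq E_{j-1}E_{j+1}$.
\end{enumerate}

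The factorial bookkeeping is routine. Each $\partial_x$ maps $x^{d-i}$ to $(d-i)x^{d-i-1}$, and the products $\binom di(d-i)!\,i!/\bigl((d-i-(d-j-1))!\,(i-(j-1))!\bigr)$ collapse to a common factor times $\binom{2}{i-j+1}$. That is exactly the shape of the quadratic form in step 3.

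The main obstacle is checking that Rolle's theorem applies to the homogeneous form, including roots at infinity. I would handle this by working with $g(t)=Q(t,1)$ for $\partial_x$ and with the reversed polynomial $Q(1,t)$ for $\partial_y$. The reversal of a real-rooted polynomial with nonzero constant term is real-rooted, and all $E_i>0$ here, so degrees never collapse unexpectedly. Alternatively, since the lemma is classical, I can simply cite Hardy--Littlewood--P\'olya \cite[Theorem~51]{HLP} for $E_{j}^2\geq E_{j-1}E_{j+1}$ and only record the translation via $p_j=p_d e_{d-j}$ and $\binom dk=\binom d{d-k}$.
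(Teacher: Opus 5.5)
Your proof is correct. The translation $p_j=p_d\,e_{d-j}(r)$ together with $\binom dk=\binom d{d-k}$ reduces the claim to $E_j^2\geq E_{j-1}E_{j+1}$. Rolle-differentiating the homogenized form down to the quadratic $E_{j-1}x^2+2E_jxy+E_{j+1}y^2$ and reading off its discriminant is the classical argument of Hardy--Littlewood--P\'olya (\S2.22, \S4.3). The paper gives no proof of its own and simply cites \cite{HLP} and \cite{RP.StanleyAC}, so your argument supplies the standard proof behind that citation.
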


\begin{proof}[Proof of \cref{thm:resolution}]
The formal identity in \cref{prop:positive-ct} and
the finite expansion in \cref{sec:gamma} prove
part~\textup{(i)}. \cref{prop:gamma-structure}
also proves palindromicity and strict unimodality.  In particular,
$A_m(y)\in\ZZ[y]$ has degree $2e$ and positive coefficients.

\cref{st:stable-eigenfunction} constructs the stable
eigenfunction identified with $F_{m,2m}$ in
\cref{ct:universal-stability}.
\cref{ct:specialization} and
\cref{ct:negative-roots} then prove that the roots of
$A_m(y)$ are negative.  This is part~\textup{(ii)}.
\cref{lem:newton}, applied to $P(y)=A_m(y)$ with $d=2e$,
gives \eqref{eq:main-newton} and completes part~\textup{(iii)}.
When $e=0$, we have $A_1=1$ and there is no inequality to check.
This completes the proof.
\end{proof}

\section{By-products}\label{sec:consequences}

Define
\begin{equation}\label{eq:gamma-polynomial}
\Gamma_m(s)=\sum_{j=0}^e\gamma_{m,j}s^j.
\end{equation}
Its coefficients are positive and its degree is $e$.  The flow
expansion also gives
\begin{equation}\label{eq:gamma-ct}
s^N\Gamma_m(s)
=\CT_z\frac{\prod_{i=1}^m(z_i^2+z_i+s)^m}{\Delta_m(z)^2}.
\end{equation}
To see this, replace $(1+y)z_i$ by $z_i$ and $y$ by $s$
in \eqref{eq:trinomial}.  The balance equations and weights
are unchanged.
The variable $s$ has exponent $\mathsf C$.
Grouping by $\mathsf C=N+j$ proves \eqref{eq:gamma-ct}, including divisibility by $s^N$ in
$\ZZ[s]$.  The constant term is taken in
$\mathbb Q(s)((z_1))\cdots((z_m))$.

\begin{proposition}\label{prop:root-equivalence}
The following three statements are equivalent for every $m\geq1$:
\begin{enumerate}
\item[(i)] $h_{m+2}^*(y)$ is real-rooted;
\item[(ii)] $A_m(y)$ is real-rooted;
\item[(iii)] every root of $\Gamma_m(s)$ is a negative real number.
\end{enumerate}
\end{proposition}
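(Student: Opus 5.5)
The plan is to prove (i)$\Leftrightarrow$(ii) directly and (ii)$\Leftrightarrow$(iii) through the gamma expansion \eqref{eq:gamma-main}, which says
\[
A_m(y)=\sum_{j=0}^e\gamma_{m,j}y^j(1+y)^{2e-2j}=(1+y)^{2e}\,\Gamma_m\!\left(\frac{y}{(1+y)^2}\right).
\]
For (i)$\Leftrightarrow$(ii) we only need $h_{m+2}^*(y)=y^NA_m(y)$ from \cref{prop:gamma-structure}. Multiplying by $y^N$ adds only the real root $0$, so the two polynomials have the same nonreal roots.

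For (iii)$\Rightarrow$(ii), factor $\Gamma_m(s)=\gamma_{m,e}\prod_{k=1}^e(s+\sigma_k)$ with $\sigma_k>0$; the degree is exactly $e$ because $\gamma_{m,e}>0$. Substituting gives
\[
A_m(y)=\gamma_{m,e}\prod_{k=1}^e\bigl(y+\sigma_k(1+y)^2\bigr).
\]
Each factor $\sigma_ky^2+(2\sigma_k+1)y+\sigma_k$ has discriminant $(2\sigma_k+1)^2-4\sigma_k^2=4\sigma_k+1>0$. So each factor has two real roots, which are negative because all its coefficients are positive. Hence $A_m$ is real-rooted, and in fact has only negative zeros.

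For (ii)$\Rightarrow$(iii), use the palindromicity of $A_m$. Its roots are negative, since the coefficients are positive and $A_m(0)=a_{m,0}>0$. By palindromicity they are closed under $y\mapsto1/y$ with multiplicity, and $y=-1$ may occur. Group the roots into pairs $\{\rho,1/\rho\}$. The root $-1$ has even multiplicity: $\deg A_m=2e$ is even and the remaining roots pair off. Each pair gives the factor
\[
(y-\rho)(y-1/\rho)=y^2-(\rho+1/\rho)y+1=(1+y)^2-(2+\rho+1/\rho)\,y .
\]
Here $\rho<0$, so $\rho+1/\rho\leq-2$, and we set $\tau=-(2+\rho+1/\rho)\geq0$. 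The factor then equals $(1+y)^2+\tau y=(1+y)^2\bigl(1+\tau s\bigr)$ with $s=y/(1+y)^2$.

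Multiplying the $e$ factors and comparing with the uniqueness of the gamma expansion gives $\Gamma_m(s)=c\prod_k(1+\tau_ks)$ for a constant $c>0$. The factors with $\tau_k=0$ come from pairs $\rho=1/\rho=-1$ and contribute degree $0$. Since $\deg\Gamma_m=e$, every $\tau_k$ must be positive, so every root $-1/\tau_k$ is negative. Uniqueness holds because the functions $y^j(1+y)^{2e-2j}$ are linearly independent. Equivalently, the map $\Gamma\mapsto(1+y)^{2e}\Gamma(y/(1+y)^2)$ is injective on polynomials of degree at most $e$, which follows by looking at the lowest power of $y$.

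The main obstacle is this bookkeeping of root pairing in (ii)$\Rightarrow$(iii): showing $-1$ has even multiplicity and keeping track of multiplicities under $y\mapsto 1/y$. The positivity $\gamma_{m,e}>0$ from \cref{prop:strict-gamma} is what rules out $\tau_k=0$ and gives strictly negative roots. Combining the three implications with \cref{ct:negative-roots} then shows that all roots of $\Gamma_m$ are negative reals.
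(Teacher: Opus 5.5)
Your proof is correct. Your arguments for (i)$\Leftrightarrow$(ii) and (iii)$\Rightarrow$(ii) coincide with the paper's, but your (ii)$\Rightarrow$(iii) takes a different route.

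\textbf{The paper's route.} It argues pointwise. Given a zero $\rho$ of $\Gamma_m$, it notes $\rho\neq0$, since $\Gamma_m(0)=\gamma_{m,0}>0$. It then picks a complex solution $y_0$ of $y=\rho(1+y)^2$ and checks that $y_0\neq-1$. From $A_m(y)=(1+y)^{2e}\Gamma_m\bigl(y/(1+y)^2\bigr)$ it reads off $A_m(y_0)=0$. Real-rootedness and positive coefficients force $y_0<0$, hence $\rho=y_0/(1+y_0)^2<0$. This needs no palindromic pairing, no multiplicity bookkeeping, and no appeal to uniqueness of the gamma expansion.

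\textbf{Your route.} You factor $A_m$ into the quadratics $(1+y)^2+\tau_k y$ and use uniqueness to get $\Gamma_m(s)=\gamma_{m,0}\prod_k(1+\tau_k s)$. This is longer, but it gives a bit more: an explicit, multiplicity-preserving correspondence between root pairs $\{\rho,1/\rho\}$ of $A_m$ and roots $-1/\tau_k=\rho/(1+\rho)^2$ of $\Gamma_m$.

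\textbf{A simplification.} The step you flag as the main obstacle can be removed. Only the last gamma term survives at $y=-1$, so $A_m(-1)=(-1)^e\gamma_{m,e}\neq0$. Hence $-1$ is never a root, every pair has $\rho\neq-1$, and $\rho+1/\rho<-2$ strictly. So each $\tau_k>0$ directly, without the parity argument or the degree count.
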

\begin{proof}
The first equivalence follows from $h_{m+2}^*(y)=y^NA_m(y)$ and
$A_m(0)>0$.  The gamma expansion gives, for $y\neq-1$,
\begin{equation}\label{eq:gamma-transform}
A_m(y)=(1+y)^{2e}\Gamma_m\left(\frac{y}{(1+y)^2}\right).
\end{equation}
At $y=-1$, only the last gamma term survives:
\begin{equation}\label{eq:minus-one}
A_m(-1)=(-1)^e\gamma_{m,e}\neq0.
\end{equation}
For $e=0$, we have $h_3^*(y)=y$ and $A_1=\Gamma_1=1$,
so all three statements hold.  Suppose $e\geq1$.

Suppose first that
$\Gamma_m(s)=\gamma_{m,e}\prod_{j=1}^e(s-\rho_j)$ with
$\rho_j<0$.  Equation \eqref{eq:gamma-transform} becomes
\[
A_m(y)=\gamma_{m,e}\prod_{j=1}^e
\bigl(y-\rho_j(1+y)^2\bigr).
\]
Writing $c_j=-\rho_j>0$, the $j$-th factor is
$c_jy^2+(1+2c_j)y+c_j$.  Its discriminant is
$(1+2c_j)^2-4c_j^2=1+4c_j>0$.
Its two roots have positive product one and negative sum
$-(1+2c_j)/c_j$, so both are negative real numbers.
This proves \textup{(ii)}.

Conversely, suppose $A_m$ is real-rooted.  Its positive
coefficients force every root to be negative.  If $\rho$ is a
zero of $\Gamma_m$, then $\rho\neq0$ because
$\Gamma_m(0)>0$.  Hence $y=\rho(1+y)^2$ is a quadratic
equation with a complex solution $y_0$; substitution shows that
$y_0\neq-1$.  Equation~\eqref{eq:gamma-transform} gives
$A_m(y_0)=0$, so $y_0<0$ and
$\rho=y_0/(1+y_0)^2<0$.  This proves \textup{(iii)}.
\end{proof}

\begin{corollary}\label{cor:gamma-roots}
Every root of $\Gamma_m$ is negative.  If $e\geq2$, its coefficients satisfy
\[
\gamma_{m,j}^2\geq
\frac{(j+1)(e-j+1)}{j(e-j)}
\gamma_{m,j-1}\gamma_{m,j+1}\qquad(1\leq j<e).
\]
\end{corollary}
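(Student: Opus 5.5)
The negativity of the roots follows by combining two earlier results. \cref{thm:resolution}(ii), proved in \cref{ct:negative-roots}, shows that $A_m(y)$ is real-rooted. The equivalence (ii)$\Rightarrow$(iii) of \cref{prop:root-equivalence} then shows that every root of $\Gamma_m(s)$ is a negative real number. The case $e=0$ is vacuous, since $\Gamma_1=1$.

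For the inequalities, I will apply \cref{lem:newton} to $P(s)=\Gamma_m(s)$ with $d=e$.
\begin{itemize}
\item By \cref{prop:strict-gamma} (restated in \cref{prop:gamma-structure}), every coefficient $\gamma_{m,j}$ is a positive integer for $0\leq j\leq e$. So $\Gamma_m$ has degree exactly $e$ and positive real coefficients.
\item By the first part, all its roots are real.
\item Hence, for $e\geq2$ and every $1\leq j<e$, \cref{lem:newton} yields
\[
\gamma_{m,j}^2\geq\frac{(j+1)(e-j+1)}{j(e-j)}\gamma_{m,j-1}\gamma_{m,j+1}.
\]
\end{itemize}
The restriction $e\geq2$ is only needed so that the index range $1\leq j<e$ is nonempty.

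I expect no real obstacle here; the work was done in \cref{ct:negative-roots} and \cref{prop:root-equivalence}. The only point to check is that the hypotheses of \cref{lem:newton} hold with degree exactly $e$, and this is guaranteed by $\gamma_{m,e}>0$ from \cref{prop:strict-gamma}.
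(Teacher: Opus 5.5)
Your proposal is correct and follows the paper's proof. The negativity of the roots comes from the real-rootedness of $A_m$ (\cref{thm:resolution}) combined with \cref{prop:root-equivalence}, and the inequalities come from \cref{lem:newton} applied to $\Gamma_m$ with $d=e$, using the positivity of the gamma coefficients.
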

\begin{proof}
\cref{thm:resolution} and
Proposition~\ref{prop:root-equivalence} give the root assertion.
The coefficient inequality follows from \cref{lem:newton}
applied to $\Gamma_m$, whose coefficients are positive by \cref{prop:gamma-structure}.
\end{proof}

We now give some small cases of $\Gamma_m(s)$ as follows:
\begin{align*}
\Gamma_1(s)&=1,\\
\Gamma_2(s)&=1+2s,\\
\Gamma_3(s)&=9+36s+12s^2,\\
\Gamma_4(s)&=36+960s+3924s^2+3312s^3+312s^4.
\end{align*}
These values follow by finite enumeration in \eqref{eq:gamma-flow-formula}. 
For $m=2$, the
last vertex forces $f_{12}=0$ and $(a_2,b_2,c_2)=(0,0,2)$.
At the first vertex $2a_1+b_1=2$, so the possibilities are
$(a_1,b_1,c_1)=(0,2,0)$ and $(1,0,1)$, with respective weights
one and two.  This gives $\Gamma_2=1+2s$.






\noindent
{\small \textbf{Acknowledgments:}}
The authors would like to express sincere gratitude for all the suggestions that have improved the presentation of this paper.
Feihu Liu was partially supported by the Postdoctoral Fellowship Program and China Postdoctoral Science Foundation (Grant No. BX2026002).

\noindent{\small \textbf{Declaration of AI Assistance:}}

During the preparation of this work, the authors used ChatGPT to assist with checking the correctness of some mathematical results and with English language editing. All AI-assisted arguments were independently verified and revised by the authors.
The authors take full responsibility for the accuracy, validity, and originality of all mathematical results and for the final content of the manuscript.


\begin{thebibliography}{99}
    
\bibitem{MorrisBV} W. Baldoni-Silva and M. Vergne, Residues formulae for volumes and Ehrhart polynomials of convex polytope.	arXiv:math/0103097, (2001).
    
\bibitem{BeckPixton} M. Beck and D. Pixton, The Ehrhart polynomial of the Birkhoff polytope, \emph{Discrete Comput. Geom.} 30 (2003), no.~4, 623--637.

\bibitem{forrester2008} P. J. Forrester and S. O. Warnaar, The importance of the Selberg integral, \emph{Bull. Aust. Math. Soc}, 45 (2008), 489--534.

\bibitem{HLP} G.~H. Hardy, J.~E. Littlewood, and G. P\'olya, \emph{Inequalities}, 2nd ed., Cambridge University Press, Cambridge, (1952).

\bibitem{morales2021} A. H. Morales and W. Shi, Refinements and Symmetries of the Morris identity for volumes of flow polytopes, \emph{Comptes Rendus. Math\'ematique}, 359 (2021), 823--851.

\bibitem{MorrisThesis} W. G. Morris, \emph{Constant term identities for finite and affine root systems}. Ph.D. thesis, University of Wisconsin-Madison. (1982).

\bibitem{selberg1944} A. Selberg, Bemerkninger om et multipelt integral, \emph{Nordisk Mat. Tidskr.} 26 (1944), 71--78.


\bibitem{StanleyJack} R. P. Stanley, Some combinatorial properties of Jack symmetric functions, \emph{Adv. Math.} 77 (1989), no.~1, 76--115.

\bibitem{RP.StanleyAC} R. P. Stanley, \emph{Algebraic Combinatorics: Walks, Trees, Tableaux, and More}, second edition, Undergraduate Texts in Mathematics. Springer, (2018).

\bibitem{Trotter} H. F. Trotter, On the product of semi-groups of operators, \emph{Proc. Amer. Math. Soc.} 10 (1959), no.~4, 545--551.

\bibitem{XinThesis} G. Xin, \emph{The Ring of Malcev--Neumann Series and the Residue Theorem}, Ph.D. thesis, Brandeis University, arXiv:math/0405133v1. (2004).

\bibitem{XinFast} G. Xin, \emph{A fast algorithm for MacMahon's partition analysis}, Electron. J. Combin. 11 (2004), \#R58.

\bibitem{XinEuclid} G. Xin, \emph{A Euclid style algorithm for MacMahon's partition analysis}, J. Combin Theory, Series A 131 (2015), 32--60.

\bibitem{XZ} G. Xin and C. Zhang, A variation of the Morris constant term, arXiv:2409.14356v2, (2024).

\bibitem{xin2022} G. Xin, C. Zhang, Y. Zhou and Y. Zhong, The constant term algebra of type A: the structure, \emph{Adv. Math.} 465 (2025), 110154.

\bibitem{Zeilberger} D. Zeilberger, Proof of a conjecture of Chan, Robbins, and Yuen. \emph{Electron. Trans. Number. Anal}, 9 (1999), 147--148.


\end{thebibliography}
\end{document}